\definecolor{rltblue}{rgb}{0,0,0.4}
\definecolor{drkgreen}{rgb}{0,0.4,0}
\definecolor{drkred}{rgb}{0.5,0,0}
\newtheorem{thm}{Theorem}
\newtheorem{lemma}[thm]{Lemma}
\newtheorem{proposition}[thm]{Proposition}
\newtheorem{theorem}[thm]{Theorem}
\newtheorem{corollary}[thm]{Corollary}
\newtheorem*{claim*}{Claim}
\newtheorem*{subclaim*}{Subclaim}
\newcounter{cases}[thm]
\newtheorem{case}[cases]{Case}
\numberwithin{subcase}{cases}
\theoremstyle{definition}
\newtheorem{definition}[thm]{Definition}
\newtheorem*{openquestion}{Open Question}
\newtheorem*{question}{Question}
\theoremstyle{remark}
\newtheorem{remark}[thm]{Remark}
\newtheorem{historic}[thm]{Historic Remark}
\theoremstyle{plain}
\newcounter{contenumi}
\def\la{\langle}
\def\ra{\rangle}
\def\and{\mathrel{\&}}
\def\ctt{{\mathtt c}}
\def\itt{{\mathtt {in}}}
\def\bfSigma{{\bf \Sigma}}
\def\On{On}
\newcounter{been}
\newcounter{eebeen}
\newcommand{\comment}[1]{}
\newcommand{\mc}[1]{\mathcal{#1}}
\DeclareMathOperator{\at}{at}
\DeclareMathOperator{\scs}{SS}
\DeclareMathOperator{\wf}{wfp}
\DeclareMathOperator{\wfc}{wfc}
\newcommand{\infi}[0]{\mathtt{in}}
\newcommand{\comp}[0]{\mathtt{c}}
\DeclareMathOperator{\sh}{sh}
\title{Scott Ranks of Models of a Theory}
\author[M. Harrison-Trainor]{Matthew Harrison-Trainor}
\address{Group in Logic and the Methodology of Science\\
University of California, Berkeley\\
 USA}
\email{matthew.h-t@math.berkeley.edu}
\urladdr{\href{http://math.berkeley.edu/~mattht}{http://math.berkeley.edu/$\sim$mattht}}
\thanks{The author was partially supported by the Berkeley Fellowship and NSERC grant PGSD3-454386-2014.
The author would like to thank Julia Knight, Theodore Slaman, and particularly Antonio Montalb\'an for his suggestions regarding Projective Determinacy and stationary sets in Section \ref{sec:scs}. }
\begin{document}

\begin{abstract}
The Scott rank of a countable structure is a measure, coming from the proof of Scott's isomorphism theorem, of the complexity of that structure. The Scott spectrum of a theory (by which we mean a sentence of $\mc{L}_{\omega_1 \omega}$) is the set of Scott ranks of countable models of that theory. In $ZFC + PD$ we give a descriptive-set-theoretic classification of the sets of ordinals which are the Scott spectrum of a theory: they are particular $\bfSigma^1_1$ classes of ordinals.

Our investigation of Scott spectra leads to the resolution (in $ZFC$) of a number of open problems about Scott ranks. We answer a question of Montalb\'an by showing, for each $\alpha < \omega_1$, that there is a $\Pi^{\infi}_2$ theory with no models of Scott rank less than $\alpha$. We also answer a question of Knight and Calvert by showing that there are computable models of high Scott rank which are not computably approximable by models of low Scott rank. Finally, we answer a question of Sacks and Marker by showing that $\delta^1_2$ is the least ordinal $\alpha$ such that if the models of a computable theory $T$ have Scott rank bounded below $\omega_1$, then their Scott ranks are bounded below $\alpha$.
\end{abstract}

\maketitle

\section{Introduction}
Scott \cite{Scott65} showed that every countable structure $\mc{A}$ can be characterized, up to isomorphism, as the the unique countable structure satisfying a particular sentence of the infinitary logic $\mc{L}_{\omega_1 \omega}$, called the \textit{Scott sentence} of $\mc{A}$. Scott's proof gives rise to a notion of \textit{Scott rank} for structures; there are several different definitions, which we will discuss later in Section \ref{ss:ranks}, but until then we may take the Scott rank of $\mc{M}$ to be the least ordinal $\alpha$ such that $\mc{M}$ has a $\Pi^{\infi}_{\alpha+1}$ Scott sentence. This paper is concerned with the following general question: given a theory (by which we mean a sentence of $\mc{L}_{\omega_1 \omega}$) what could the Scott ranks of models of $T$ be? This collection of Scott ranks is called the \textit{Scott spectrum} of $T$:
\begin{definition}
Let $T$ be an $\mc{L}_{\omega_1 \omega}$-sentence. The \textit{Scott spectrum} of $T$ is the set
\[ \scs(T) = \{ \alpha \in \omega_1 \colon \alpha \text{ is the Scott rank of a countable model of } T \}.\]
\end{definition}
\noindent This is an old definition. For example, in 1981, Makkai \cite{Makkai81} defined the Scott spectrum of a theory in this way and showed that there is a sentence of $\mc{L}_{\omega_1 \omega}$ without uncountable models whose Scott spectrum is unbounded below $\omega_1$. In \cite[p. 151]{Jouko11} a reference is made to \textit{gaps} in the Scott spectrum---ordinals $\beta$ which are not in the Scott spectrum, but which are bounded above by some other $\alpha$ in the Scott spectrum---but the only results proved about Scott spectra are about bounds below $\omega_1$. This seems to be a general pattern: whenever Scott spectra are mentioned in the literature, it is to say that they are either bounded or unbounded below $\omega_1$. This paper, to the contrary, is about the gaps, and about a classification of the sets of countable ordinals that can be Scott spectra. Our main result is a complete descriptive-set-theoretic classification of the sets of ordinals which are Scott spectra. For this classification, we assume projective determinacy.

This work began with the following question, first asked by Montalb\'an at the 2013 BIRS Workshop on Computable Model Theory.
\begin{question}[Montalb\'an]
If $T$ is a $\Pi^{\infi}_2$ sentence, must $T$ have a model of Scott rank two or less?
\end{question}
\noindent At the time, we knew very little about how to answer such questions. In this paper, we make a large step forward in our understanding of Scott spectra: not only do we answer the question negatively, but we also answer the generalization to any ordinal $\alpha$ and we apply those techniques to solve other open problems about Scott ranks.

The paper is in two parts. The first part is a general construction in Section \ref{sec:gen-const}. Given a $\mc{L}_{\omega_1 \omega}$-pseudo-elementary class of linear orders, we build an $\mc{L}_{\omega_1 \omega}$-sentence $T$ so that the Scott spectrum of $T$ is related to the set of well-founded parts of linear orders in that class. The construction bears some similarity to work of Marker \cite{Marker90}. In the second part, we apply the general construction to get various results about Scott spectra. We will describe these applications now.

\subsection{\texorpdfstring{$\Pi^{\infi}_2$ theories with no models of low Scott rank}{Pi2 theories with no models of low Scott rank}}

It follows easily from known results that for a given ordinal $\alpha$, there is a theory $T$ all of whose models have Scott rank at least $\alpha$. (We can, for example, take $T$ to be the Scott sentence of a model of Scott rank $\alpha$.) This is not very surprising, as the theory $T$ we get has quantifier complexity about $\alpha$; complicated theories may have only complicated models. The interesting question is whether there is an uncomplicated theory all of whose models are complicated. Such theories exist.

\begin{theorem}\label{thm:pi-two}
Fix $\alpha < \omega_1$. There is a $\Pi^{\infi}_2$ sentence $T$ whose models all have Scott rank $\alpha$.
\end{theorem}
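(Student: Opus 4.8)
The plan is to apply the general construction of Section \ref{sec:gen-const} to a well-chosen pseudo-elementary class of linear orders. Recall that, given an $\mc{L}_{\omega_1\omega}$-pseudo-elementary class $\mc{K}$ of linear orders, the construction produces an $\mc{L}_{\omega_1\omega}$-sentence $T$ whose Scott spectrum is controlled by the set of order types of well-founded parts of the linear orders in $\mc{K}$. So to get a theory $T$ all of whose models have Scott rank exactly $\alpha$, I want a pseudo-elementary class $\mc{K}$ such that every $L \in \mc{K}$ has well-founded part of order type (roughly) $\alpha$ — for instance, $\mc{K}$ could be the class of all linear orders of the form $\alpha + \ZZ\cdot\eta$ for various choices of a linear order $\eta$, or more simply linear orders whose well-founded initial segment is exactly $\alpha$ and whose remainder has no least element. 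The key point is that a single fixed countable ordinal $\alpha$ can be pinned down by a pseudo-elementary (indeed $\mc{L}_{\omega_1\omega}$) sentence asserting ``there is an isomorphic copy of $\alpha$ as an initial segment, and above it the order is dense-like / has no least element,'' and this forces the well-founded part to be exactly $\alpha$ in every model.

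The steps, in order, would be: (1) Write down explicitly the pseudo-elementary class $\mc{K}_\alpha$ of linear orders whose well-founded part has order type $\alpha$; this requires checking that ``well-founded part $= \alpha$'' is expressible in the appropriate (pseudo-elementary) way — one writes a sentence of $\mc{L}_{\omega_1\omega}$ characterizing $\alpha$ up to isomorphism (using that $\alpha$ is a countable ordinal, so has a Scott sentence) applied to an initial segment named by a predicate, plus a first-order clause that the complement has no least element. (2) Feed $\mc{K}_\alpha$ into the Section \ref{sec:gen-const} construction to obtain $T$. (3) Verify that the construction outputs a sentence of complexity $\Pi^{\infi}_2$ — this is the crucial quantitative claim and presumably the construction is designed so that, regardless of the complexity of the input class, the output sentence is always $\Pi^{\infi}_2$ (the combinatorial content of $\mc{K}_\alpha$ is absorbed into the structure rather than the syntax). (4) Read off from the main property of the construction that the Scott ranks of models of $T$ are exactly $\{\alpha\}$ (or the appropriate normalization thereof), using that every $L \in \mc{K}_\alpha$ has well-founded part $\alpha$.

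The main obstacle I expect is step (3) together with the precise bookkeeping in step (4): making sure that the general construction really does deliver a $\Pi^{\infi}_2$ sentence and that the Scott rank it assigns is \emph{exactly} $\alpha$ rather than $\alpha$ shifted by some small additive or multiplicative constant coming from the coding machinery. This is where one must look carefully at how the construction transfers ``well-founded part of the linear order'' into ``Scott rank of the model'' — there is typically an off-by-a-bounded-amount phenomenon, so one either reindexes $\alpha$ on the input side to compensate, or one checks that the relevant normalization of Scott rank (among the several discussed in Section \ref{ss:ranks}) makes the correspondence exact. The pseudo-elementariness in step (1) is a minor point: one introduces auxiliary relations to witness the copy of $\alpha$ and then existentially quantifies them away, which is exactly what ``pseudo-elementary'' buys us, and it is precisely this freedom that keeps the syntactic complexity of the final $T$ from blowing up with $\alpha$.
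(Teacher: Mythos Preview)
Your overall strategy---feed a suitable $PC_{\mc{L}_{\omega_1\omega}}$-class of linear orders into the construction of Section~\ref{sec:gen-const}---is exactly what the paper does, and your proposal would work. However, the paper's execution is considerably simpler than yours and sidesteps the very worries you raise in steps~(3) and~(4).

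The paper does not bother with a class of linear orders of the form $\alpha + L'$ with $L'$ ill-founded; it simply takes the class $\mc{S}$ consisting of the single well-order $\alpha$ itself. This is pinned down by the $\mc{L}_{\omega_1\omega}$-sentence $S$ that (i) names every element of a fixed $\omega$-presentation of $\alpha$ by a constant, (ii) asserts the atomic diagram, and (iii) says $(\forall x)\bigdoublevee_i(x=\underline{a}_i)$. The only model of $S$ is $\alpha$, so Theorem~\ref{thm:construction} immediately yields $\scs(T)=\{\wfc(\alpha)\}=\{\alpha\}$, with no off-by-one to chase: because $\alpha$ is well-founded, $\wfc(\alpha)=\wf(\alpha)=\alpha$ on the nose. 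Your version, by allowing non-well-founded $L\in\mc{K}_\alpha$, genuinely \emph{creates} the off-by-one issue you worry about (for such $L$, $\wfc(L)=\alpha+1$), forcing you to invoke Theorem~\ref{thm:construction2} rather than Theorem~\ref{thm:construction}.

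As for your step~(3), this requires no separate work: part~(1) of Theorem~\ref{thm:construction} states outright that $T$ can be taken $\Pi^{\infi}_2$. The mechanism is Morleyization of $S^+$ together with the fact that the rest of $T(L)$ is already $\Pi^{\infi}_2$; you do not need to verify anything beyond citing the theorem.
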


\noindent In particular, taking $\alpha > 2$ answers the question of Montalb\'an stated above. In Section \ref{sec:pi-two} we will derive Theorem \ref{thm:pi-two} from the general construction of Section \ref{sec:gen-const}.

\subsection{Computable structures of high Scott rank}

Nadel \cite{Nadel74} showed that if $\mc{A}$ is a computable structure, then its Scott rank is at most $\omega_1^{CK} + 1$. We say that a computable structure with non-computable Scott rank, i.e.\ with Scott rank $\omega_1^{CK}$ or $\omega_1^{CK} + 1$, has high Scott rank. There are few known examples of computable structures of high Scott rank. Harrison \cite{Harrison68} gave the first example of a structure of Scott rank $\omega_1^{CK} + 1$: the Harrison linear order $\mc{H}$, which is a computable linear order of order type $\omega_1^{CK} (1 + \mathbb{Q})$. The Harrison order is the limit of the computable ordinals in the following sense: given $\alpha$ a computable ordinal, there is a computable ordinal $\beta$ such that $\mc{H} \equiv_\alpha \beta$. We say that such a structure is strongly computable approximable:

\begin{definition}
A computable structure $\mc{A}$ of non-computable rank is \textit{weakly computably approximable} if every computable infinitary sentence $\varphi$ true in $\mc{A}$ is also true in some computable $\mc{B} \ncong \mc{A}$. $\mc{A}$ is \textit{strongly computably approximable} if we require that $\mc{B}$ have computable Scott rank.
\end{definition}

Makkai \cite{Makkai81} gave the first example of an arithmetic structure of Scott rank $\omega_1^{CK}$, and Knight and Millar \cite{KnightMillar10} modified the construction to get a computable structure. Calvert, Knight, and Millar \cite{CalvertKnightMillar06} showed that this structure is also strongly computably approximable. Calvert and Knight \cite[Problem 6.2]{CalvertKnight06} asked the following question:

\begin{question}[Calvert and Knight]
Is every computable model of high Scott rank strongly (or weakly) computably approximable?
\end{question}

At the time, every known example of a computable structure of high Scott rank was strongly computably approximable. We show here that there are computable structures of Scott rank $\omega_1^{CK}$ and $\omega_1^{CK} + 1$ which are not strongly computably approximable.

\begin{theorem}\label{thm:comp}
For $\alpha = \omega_1^{CK}$ or $\alpha = \omega_1^{CK} + 1$ : There is a computable model $\mc{A}$ of Scott rank $\alpha$ and a $\Pi^{\comp}_2$ sentence $\psi$ such that $\mc{A} \models \psi$, and whenever $\mc{B}$ is any structure and $\mc{B} \models \psi$, $\mc{B}$ has Scott rank $\alpha$.
\end{theorem}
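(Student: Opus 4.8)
The strategy is to apply the general construction of Section \ref{sec:gen-const} to a carefully chosen pseudo-elementary class of linear orders, much as Theorem \ref{thm:pi-two} is obtained, but now paying attention to effectivity and to the upper bound on Scott rank that the construction provides. Recall that the general construction, given a $\mc{L}_{\omega_1\omega}$-pseudo-elementary class of linear orders, produces a sentence $T$ whose Scott spectrum is tied to the well-founded parts of the orders in the class; the key point for us is that when the input class is a \emph{single} linear order of a definable shape, the output sentence can be taken to be $\Pi^{\comp}_2$ and its models all have a prescribed Scott rank. So the plan is: (i) choose a computable linear order $L$ whose well-founded part has order type $\omega_1^{CK}$ (for the $\alpha = \omega_1^{CK}$ case) or $\omega_1^{CK}+1$ (for the $\alpha = \omega_1^{CK}+1$ case) — the Harrison order $\mc{H}$, or a variant of it, is the natural candidate — and arrange that the ill-founded part is rigidly forced to be a copy of $\mathbb{Q}$ or $1+\mathbb{Q}$; (ii) feed this into the general construction to get the $\Pi^{\comp}_2$ sentence $\psi$; (iii) verify that the canonical model $\mc{A}$ built from $L$ is computable, using that $L$ itself is computable and that the construction is effective; and (iv) verify that \emph{every} model of $\psi$ has Scott rank exactly $\alpha$, which is where the real content lies.

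For step (iv), the point is that the general construction must be robust enough that the well-founded part of the linear-order ``skeleton'' inside any model $\mc{B} \models \psi$ is pinned down to have order type exactly $\omega_1^{CK}$ (resp.\ $\omega_1^{CK}+1$), regardless of how ill-founded the rest of $\mc{B}$ is. This is exactly the phenomenon that makes the Harrison order work in the classical setting: it has high Scott rank precisely because it looks like an ordinal up to every computable level but is not an ordinal. Here we need the analogous rigidity at the level of the whole structure $\mc{B}$ coded by the linear order, so that the Scott rank of $\mc{B}$ is computed from the well-founded part and comes out to be $\alpha$. The $\Pi^{\comp}_2$ bound on $\psi$ is what guarantees, via the general construction, that no model can have Scott rank strictly above $\alpha$; and the choice of $L$ with well-founded part of the prescribed type is what forces Scott rank at least $\alpha$. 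Combining, every model has Scott rank exactly $\alpha$, and $\mc{A}$ is a computable witness.

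I expect the main obstacle to be step (iv), and specifically the ``from below'' direction: showing that no model of $\psi$ can have Scott rank \emph{lower} than $\alpha$. This requires that the sentence $\psi$ genuinely enforces a non-collapsing well-founded part — one cannot simply take $L = \mc{H}$ and its Scott sentence, since the Scott sentence of $\mc{H}$ is not $\Pi^{\comp}_2$. Instead the coding in Section \ref{sec:gen-const} must translate the high complexity of $\mc{H}$ into a $\Pi^{\comp}_2$ condition in such a way that satisfying this low-complexity condition still requires realizing a structure of rank $\omega_1^{CK}$ (or $\omega_1^{CK}+1$); this is the analogue, one level up, of how a $\Pi^{\comp}_2$ axiomatization can force a linear order to have a copy of the Harrison order as an initial segment. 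A secondary technical point is the precise choice between $\omega_1^{CK}$ and $\omega_1^{CK}+1$: one should engineer the ill-founded tail so that the ``limit'' behavior either does or does not add one extra level of Scott rank, matching the distinction between $\mathbb{Q}$ and $1+\mathbb{Q}$ in the Harrison order. Finally, once $\psi$ is in hand, the fact that $\mc{A} \models \psi$ is not strongly (nor even weakly) computably approximable is immediate: any computable — indeed any — $\mc{B} \models \psi$ has Scott rank $\alpha = \omega_1^{CK}$ or $\omega_1^{CK}+1$, which is not computable, so there is no $\mc{B}$ of computable Scott rank satisfying the $\Pi^{\comp}_2$ (hence computable infinitary) sentence $\psi$ true in $\mc{A}$.
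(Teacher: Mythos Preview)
Your overall strategy is correct and matches the paper's: feed the Harrison order into the general construction of Section~\ref{sec:gen-const}. But you are overcomplicating step~(iv) and misidentifying how the two cases $\alpha = \omega_1^{CK}$ and $\alpha = \omega_1^{CK}+1$ are distinguished.

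The point you are missing is that one can pin down the linear-order sort \emph{exactly}, not just up to well-founded part. Take a computable presentation of $\mc{H}$ (with computable successor, obtained by replacing each point by $\omega$), name every element by a constant $\underline{h}_i$, and let $S$ be the atomic diagram together with $(\forall x)\bigdoublevee_i (x = \underline{h}_i)$. Then $S$ has, up to isomorphism, a \emph{unique} model, namely $\mc{H}$ itself. Hence the class $\mc{S}$ is a singleton and the Scott spectrum coming out of Theorem~\ref{thm:construction} is the singleton $\{\wfc(\mc{H})\} = \{\omega_1^{CK}+1\}$. Step~(iv), which you flag as the main obstacle, is then trivial: there is nothing to rule out ``from below'' because every model of $\psi$ has order sort isomorphic to $\mc{H}$. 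The computable model comes from Theorem~\ref{thm:construction}(2).

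Your proposed mechanism for separating $\omega_1^{CK}$ from $\omega_1^{CK}+1$---engineering the tail to look like $\mathbb{Q}$ versus $1+\mathbb{Q}$---does not work: for any ill-founded $L$ with $\wf(L) = \omega_1^{CK}$ one has $\wfc(L) = \omega_1^{CK}+1$ regardless of the shape of the ill-founded part. The actual distinction is between the two versions of the construction: Theorem~\ref{thm:construction} yields $\scs(T) = \{\wfc(L): L \in \mc{S}\}$, giving $\omega_1^{CK}+1$, while Theorem~\ref{thm:construction2} yields $\scs(T) = \{\wf(L): L \in \mc{S}\}$, giving $\omega_1^{CK}$. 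The same Harrison order serves for both cases; only the choice of construction (the axiom \ref{O4a} versus \ref{O4b}) changes.
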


\noindent We prove Theorem \ref{thm:comp} in Section \ref{sec:comp}. Note that this gives a new type of model of high Scott rank which is qualitatively different from the previously known examples.

\subsection{Bounds on Scott Height}

It follow from a general counting argument that there is a least ordinal $\alpha < \omega_1$ such that if $T$ is a computable $\mc{L}_{\omega_1 \omega}$-sentence whose Scott spectrum is bounded below $\omega_1$, then the Scott spectrum of $T$ is bounded below $\alpha$. We call this ordinal the Scott height of $\mc{L}^{\comp}_{\omega_1 \omega}$, and we denote it $\sh(\mc{L}^{\comp}_{\omega_1,\omega})$.

Sacks \cite{Sacks83} and Marker \cite{Marker90} asked:
\begin{question}[Sacks and Marker]
What is the Scott height of $\mc{L}^{\comp}_{\omega_1 \omega}$?
\end{question}

\begin{definition}
$\delta^1_2$ is the least ordinal which has no $\Delta^1_2$ presentation.
\end{definition}

Sacks \cite{Sacks83} showed that $\sh(\mc{L}^{\comp}_{\omega_1,\omega}) \leq \delta^1_2$. Marker \cite{Marker90} was able to resolve this question for pseudo-elementary classes.

\begin{definition}
A class $\mc{C}$ of structures in a language $\mc{L}$ is an $\mc{L}_{\omega_1 \omega}$-pseudo-elementary class ($PC_{\mc{L}_{\omega_1 \omega}}$-class) if there is an $\mc{L}_{\omega_1 \omega}$-sentence $T$ in an expanded language $\mc{L}' \supseteq \mc{L}$ such that the structures in $\mc{C}$ are the reducts to $\mc{L}$ of the models of $T$. $\mc{C}$ is a computable $PC_{\mc{L}_{\omega_1 \omega}}$-class if $T$ is a computable sentence.
\end{definition}

\noindent We can define the Scott height of $PC_{\mc{L}_{\omega_1 \omega}}$ in a similar way to the Scott height of $\mc{L}^{\comp}_{\omega_1 \omega}$, except that now we consider all $\mc{L}_{\omega_1 \omega}$-pseudo-elementary classes which are the reducts of the models of a computable sentence. Marker \cite{Marker90} showed that $\sh(PC^{\comp}_{\mc{L}_{\omega_1 \omega}}) = \delta^1_2$. Using our methods, we can expand this argument to $\mc{L}^{\comp}_{\omega_1 \omega}$.

\begin{theorem}\label{thm:sh}
$\sh(\mc{L}^{\comp}_{\omega_1,\omega}) = \delta^1_2$.
\end{theorem}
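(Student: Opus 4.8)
The plan is to establish the two inequalities $\sh(\mc{L}^{\comp}_{\omega_1,\omega}) \leq \delta^1_2$ and $\sh(\mc{L}^{\comp}_{\omega_1,\omega}) \geq \delta^1_2$ separately. The upper bound is already due to Sacks \cite{Sacks83}, so the content of the theorem is the lower bound: I must produce, for each ordinal $\alpha < \delta^1_2$, a computable $\mc{L}_{\omega_1\omega}$-sentence $T_\alpha$ whose Scott spectrum is bounded below $\omega_1$ but contains an ordinal $\geq \alpha$. (More precisely, to show the height is at least $\delta^1_2$ I need: for every $\beta < \delta^1_2$ there is a computable theory with bounded Scott spectrum whose bound exceeds $\beta$.) The natural route is to invoke the general construction of Section \ref{sec:gen-const}, which converts a computable $PC_{\mc{L}_{\omega_1\omega}}$-class of linear orders into a computable $\mc{L}_{\omega_1\omega}$-sentence $T$ whose Scott spectrum tracks the set of well-founded parts (equivalently, the ordinals $\wf(L)$) of the linear orders $L$ in the class. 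Thus the Scott spectrum of $T$ is bounded below $\omega_1$ precisely when the well-founded parts of members of the class are bounded, and the bound on the spectrum is essentially the supremum of those well-founded parts.

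So the problem reduces to the following: for each $\alpha < \delta^1_2$, find a computable $PC_{\mc{L}_{\omega_1\omega}}$-class $\mc{C}_\alpha$ of linear orders such that (i) every $L \in \mc{C}_\alpha$ is ill-founded, hence $\wf(L) < \omega_1$, so the well-founded parts are bounded below $\omega_1$ on each model, but (ii) $\sup_{L \in \mc{C}_\alpha} \wf(L) > \alpha$ — in fact we want the supremum of $\wf(L)$ over the class to be able to reach arbitrarily high below $\delta^1_2$. This is exactly the kind of statement Marker \cite{Marker90} proved in the $PC$ setting to get $\sh(PC^{\comp}_{\mc{L}_{\omega_1\omega}}) = \delta^1_2$: there is a computable $PC$-class of linear orders whose well-founded parts are each countable but unbounded in $\delta^1_2$. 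The key object is a $\Delta^1_2$-good construction: using the fact that $\delta^1_2 = \sup\{ \text{order types of } \Delta^1_2 \text{ well-orders} \}$ is the closure ordinal of the $\Sigma^1_2$ (equivalently the hyperjump-style) inductive operators, one builds a single computable sentence in an expanded language whose models code, via second-order quantifiers, pairs $(L, X)$ where $X$ witnesses that an initial segment of $L$ of type $\beta$ is well-ordered for each $\beta < \delta^1_2$, while always forcing a tail that is ill-founded. I would cite or adapt Marker's linear-order construction directly for this step.

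The main obstacle — and the actual new content beyond Marker — is bridging from the $PC_{\mc{L}_{\omega_1\omega}}$ statement to the full $\mc{L}^{\comp}_{\omega_1\omega}$ statement, i.e.\ removing the reduct. In the $PC$ setting one gets to use auxiliary symbols freely; here the Scott spectrum is computed from the models in the original language, so one must check that the general construction of Section \ref{sec:gen-const} genuinely eliminates the need for an expanded language, packaging the linear order, the ill-foundedness witness, and the coding apparatus all into a single $\mc{L}_{\omega_1\omega}$-sentence without extra predicates, while controlling the Scott ranks of the resulting models tightly enough that the spectrum's supremum is exactly $\sup_L \wf(L)$ (up to an additive constant that does not affect the $\delta^1_2$ computation). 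Concretely the verification splits into: (a) every model of $T$ has Scott rank $< \omega_1$, with the rank computable from $\wf(L)$ — this is where we use that an ill-founded linear order has a Scott-rank-bounding analysis through its well-founded part plus a ``Harrison-order-like'' tail of bounded complexity; and (b) the spectrum realizes all the target ordinals, which follows from (ii) above together with the forward direction of the Section \ref{sec:gen-const} correspondence. Granting those, $\sh(\mc{L}^{\comp}_{\omega_1,\omega}) \geq \delta^1_2$, and combining with Sacks's upper bound gives equality. A secondary technical point to watch is effectivity: the sentence $T_\alpha$ must be computable uniformly enough that no information about $\alpha$ leaks in — but since $\delta^1_2$ is the relevant closure ordinal we need only one fixed computable sentence whose class already has unbounded well-founded parts, exactly mirroring how Marker obtains a single computable $PC$-sentence, so no uniformity in $\alpha$ is actually required.
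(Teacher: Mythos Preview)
Your high-level strategy matches the paper's: cite Sacks for the upper bound, and for the lower bound feed a suitable lightface $\Sigma^1_1$ class of linear orders into Theorem~\ref{thm:construction2} to obtain a computable $\mc{L}_{\omega_1\omega}$-sentence with bounded Scott spectrum reaching arbitrarily high below $\delta^1_2$. But you have misplaced the real obstacle and left the essential step unproved.

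You say the new content is ``removing the reduct,'' i.e.\ passing from a $PC_{\mc{L}_{\omega_1\omega}}$-statement to an $\mc{L}_{\omega_1\omega}$-sentence. That is not where the work is: Theorem~\ref{thm:construction2} already takes a $PC_{\mc{L}_{\omega_1\omega}}$-class of \emph{linear orders} as input and outputs an honest $\mc{L}_{\omega_1\omega}$-sentence, so no further reduct-elimination is needed. The actual gap is producing the input class. You need, for each $\alpha<\delta^1_2$, a lightface $\Sigma^1_1$ class $\mc{C}$ of linear orders with $\{\wf(L):L\in\mc{C}\}$ bounded below $\omega_1$ yet containing something above $\alpha$. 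You attribute this to Marker, but Marker's theorem is about \emph{Scott spectra} of computable $PC$-classes of arbitrary structures, not about well-founded parts of linear orders; your sketch involving ``$\Delta^1_2$-good constructions'' and a witness $X$ certifying well-orderedness of initial segments ``for each $\beta<\delta^1_2$'' is not a $\Sigma^1_1$ description and does not obviously give a class with \emph{bounded} well-founded parts.

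The paper supplies exactly this missing step. Starting from a Marker $PC$-class $\mc{S}$ with $\alpha<\sh(\mc{S})<\omega_1$, it defines $\mc{C}$ to consist of those linear orders $L$ that (i) embed as an initial segment of a relativized Harrison order and (ii) index a family of \emph{non-standard back-and-forth relations} $(\precsim_\gamma)_{\gamma\in L}$ on some model $\mc{A}\models T$, so that every $\gamma\in L$ admits a $\gamma$-free tuple and the intersection relation has the back-and-forth property. This is a lightface $\Sigma^1_1$ condition; one checks $\mc{C}\cap\On=\scs(\mc{S})$, and the Harrison embedding together with condition (a) forces $\wf(L)\le\sup\scs(\mc{S})$ for every $L\in\mc{C}$. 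Feeding $\mc{C}$ into Theorem~\ref{thm:construction2} then gives the required computable sentence $T'$ with $\alpha\le\sh(T')<\omega_1$. The device of encoding Scott ranks as lengths of linear orders via non-standard back-and-forth relations is the idea you are missing.
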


We prove this theorem in Section \ref{sec:sh}.

\subsection{Classifying the Scott spectra}

Assuming projective determinacy, we will define a descriptive set-theoretic class which will give a classification of the Scott spectra.

\begin{definition}
A set of countable ordinals is a $\bfSigma^1_1$ class of ordinals if it consists of the order types in $\mc{C} \cap \On$ for some $\bfSigma^1_1$ class $\mc{C}$ of linear orders on $\omega$.
\end{definition}

\noindent Note that $\mc{C}$ and $\On$ here are classes of \textit{presentations} of ordinals as linear orders of $\omega$. Frequently we will pass without comment between viewing a class as a collection of ordinals, i.e., of order types, and as a collection of $\omega$-presentations of linear orders.

\begin{theorem}[ZFC + PD]\label{thm:main-result}
The Scott spectra of $\mc{L}_{\omega_1 \omega}$-sentences are the $\bfSigma^1_1$ classes $\mc{C}$ of ordinals with the property that, if $\mc{C}$ is unbounded below $\omega_1$, then either $\mc{C}$ is stationary or $\{ \alpha \colon \alpha + 1 \in \mc{C} \}$ is stationary.
\end{theorem}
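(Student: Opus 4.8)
The plan is to prove both directions of the characterization. For the forward direction, I would start with an arbitrary $\mc{L}_{\omega_1 \omega}$-sentence $T$ and analyze its Scott spectrum $\scs(T)$. To see that $\scs(T)$ is a $\bfSigma^1_1$ class of ordinals, I would use the fact that for a fixed ordinal $\alpha$, the set of (presentations of) structures of Scott rank exactly $\alpha$ is Borel, uniformly in a presentation of $\alpha$; combining this with the $\bfSigma^1_1$ quantifier "there exists a presentation of a countable model of $T$" over the space of linear orders should express $\scs(T)$ as the order types appearing in some $\bfSigma^1_1$ class of well-orders. The stationarity constraint is the subtler part of this direction: I would argue that if $\scs(T)$ is unbounded in $\omega_1$, then a pressing-down/Fodor-type argument applied to the Scott ranks of models, together with the reflection properties of $\mc{L}_{\omega_1\omega}$-sentences (à la the Makkai-type arguments), forces either $\scs(T)$ itself or its "$+1$-shift" to be stationary — roughly because a club of ordinals avoiding $\scs(T)$ would let one reflect a putative model to a smaller model, contradicting unboundedness, but the reflection only controls the rank up to a possible off-by-one, explaining the disjunction with $\{\alpha : \alpha+1 \in \scs(T)\}$.

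For the reverse direction, suppose $\mc{C}$ is a $\bfSigma^1_1$ class of ordinals satisfying the stationarity hypothesis. I would invoke the general construction of Section \ref{sec:gen-const}, which takes an $\mc{L}_{\omega_1\omega}$-pseudo-elementary class of linear orders and produces a sentence $T$ whose Scott spectrum is tied to the well-founded parts of the linear orders in that class. The strategy is: present $\mc{C}$ (together with its relationship to presentations of ordinals) as the well-founded-part spectrum of a $PC_{\mc{L}_{\omega_1\omega}}$-class of linear orders $\mc{L}$ — here I would use that a $\bfSigma^1_1$ class of linear orders is, by the standard tree representation of $\bfSigma^1_1$ sets (and projective determinacy to handle the general $\bfSigma^1_1$ case rather than just the lightface case), exactly the class of structures with some added relations witnessing membership, i.e. a pseudo-elementary class. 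Feeding this $\mc{L}$ into the general construction gives a $T$ with $\scs(T) = \mc{C}$, or $\scs(T)$ differing from $\mc{C}$ in a controlled way that the stationarity hypothesis is designed to repair (this is where the disjunction "$\mc{C}$ stationary or $\{\alpha : \alpha+1 \in \mc{C}\}$ stationary" gets used to absorb boundary ordinals that the construction produces or fails to produce).

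I expect the main obstacle to be the stationarity side of the forward direction: showing that unbounded Scott spectra must be stationary "up to a shift." This requires a genuine model-theoretic reflection argument — one must show that if there were a club $E$ disjoint from both $\scs(T)$ and its shift, then every model of $T$ of large Scott rank could be replaced by one of strictly smaller rank (pressing down along $E$), yielding a contradiction with unboundedness. Making this reflection precise likely needs a careful use of the interplay between Scott rank, back-and-forth relations $\equiv_\alpha$, and the Löwenheim–Skolem-type reflection for $\mc{L}_{\omega_1\omega}$, and the bookkeeping of the off-by-one (whether the reflected model's rank lands in $\scs(T)$ or in the shift) is exactly what forces the disjunctive form of the hypothesis. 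The descriptive-set-theoretic bookkeeping on the reverse side — correctly encoding an arbitrary lightface-or-boldface $\bfSigma^1_1$ class as a pseudo-elementary class of linear orders and tracking well-founded parts through the Section \ref{sec:gen-const} construction — should be routine once the construction's output is known, modulo handling the finitely many "edge" ordinals near suprema of gaps, which is again where the stationarity assumption does its work.
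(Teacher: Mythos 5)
Your reverse direction is broadly the right architecture (convert the $\bfSigma^1_1$ class to a $PC_{\mc{L}_{\omega_1\omega}}$-class via Lopez--Escobar and feed it into the Section \ref{sec:gen-const} construction), though you misplace the role of Projective Determinacy: it is not needed for the boldface Lopez--Escobar conversion, which is a ZFC theorem. PD enters as Turing determinacy, to pass from ``for every $X$ on a cone, $\omega_1^X$ or $\omega_1^X+1$ lies in $\mc{C}$'' to a \emph{uniform} choice of which alternative holds on a cone; that uniform choice is exactly what guarantees that the ill-founded orders in the restricted class $\mc{C}'$ (which are all Harrison orders $\mc{H}^X$, so contribute only $\wf = \omega_1^X$ or $\wfc = \omega_1^X+1$) add no ordinals beyond $\mc{C}\cap\On$ when you apply Theorem \ref{thm:construction} or \ref{thm:construction2}.

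The genuine gap is in your forward direction, specifically the stationarity claim. Your proposed mechanism --- a Fodor/pressing-down argument in which a club disjoint from $\scs(T)$ and its shift would let you ``reflect every model of large Scott rank to one of strictly smaller rank, contradicting unboundedness'' --- does not yield a contradiction: replacing models by smaller-rank ones is perfectly consistent with the spectrum being unbounded, and it is not clear what regressive function you would press down on. What is actually needed is a \emph{positive} existence statement, and the paper gets it from the Gandy basis theorem (Lemma \ref{lem:models-admissible}): if $\scs(T)$ is unbounded below $\omega_1$, then there is a $Y$ such that for every $X \geq_T Y$ there is a model $\mc{A} \models T$ with $\omega_1^{\mc{A},X} = \omega_1^X$ and $SR(\mc{A}) \in \{\omega_1^X, \omega_1^X+1\}$. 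Since $\{\omega_1^X : X \geq_T Y\}$ contains a club (Remark \ref{rem:club}), the union $\scs(T) \cup \{\alpha : \alpha+1 \in \scs(T)\}$ contains a club, and hence at least one of the two sets is stationary (the paper invokes PD here to make the choice uniform on a cone, but the disjunction itself already follows). Without the Gandy basis input, your reflection sketch has no engine, and this is the step your proposal would need to supply. Relatedly, your argument that $\scs(T)$ is $\bfSigma^1_1$ glosses over the fact that ``Scott rank exactly $\mathrm{ot}(L)$'' must be asserted without the $\Pi^1_1$ hypothesis that $L$ is well-founded; the paper handles this by existentially quantifying over non-standard back-and-forth relations indexed by $L$ together with an embedding of $L$ into an initial segment of a Harrison order, so that the ill-founded members of the resulting $\bfSigma^1_1$ class are harmless.
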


We can also get an alternate characterization which is more tangible. To state this, we must define two ways to produce an ordinal from an arbitrary linear order.

\begin{definition}
Let $(L,\leq)$ be a linear order. The \textit{well-founded part} $\wf(L)$ of $L$ is the largest initial segment of $L$ which is well-founded. The \textit{well-founded collapse} of $L$, $\wfc(L)$, is the order type of $L$ after we collapse the non-well-founded part $L \setminus \wf(L)$ to a single element.
\end{definition}
\noindent We can identify $\alpha \in \wf(L)$ with the ordinal which is the order type of $\{\beta \in L : \beta < \alpha\}$. We can also identify $\wf(L)$ with its order type. If $L$ is well-founded, with order type $\alpha$, then $\wfc(L) = \wf(L) = \alpha$. If $L$ is not well-founded, $\wfc(L) = \wf(L)+1$.

\begin{theorem}[ZFC + PD]\label{thm:main-result2}
The Scott spectra of $\mc{L}_{\omega_1 \omega}$-sentences are exactly the sets of the form:
\begin{enumerate}
	\item $\wf(\mc{C})$,
	\item $\wfc(\mc{C})$, or
	\item $\wf(\mc{C}) \cup \wfc(\mc{C})$
\end{enumerate}
where $\mc{C}$ is a $\bfSigma^1_1$ class of linear orders of $\omega$.
\end{theorem}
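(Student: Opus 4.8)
The plan is to deduce Theorem~\ref{thm:main-result2} from Theorem~\ref{thm:main-result} by showing that the two descriptions of the admissible Scott spectra coincide. One direction is the content of the general construction of Section~\ref{sec:gen-const}, combined with the classification theorem: given a $\bfSigma^1_1$ class $\mc{C}$ of linear orders of $\omega$, the construction produces an $\mc{L}_{\omega_1\omega}$-sentence whose Scott spectrum is built from the well-founded parts and well-founded collapses of the orders in $\mc{C}$, and hence sets of the forms (1)--(3) are indeed Scott spectra. So the real work is the converse: every set $S$ satisfying the hypothesis of Theorem~\ref{thm:main-result} — a $\bfSigma^1_1$ class of ordinals which, if unbounded in $\omega_1$, has itself or its ``predecessor set'' $\{\alpha : \alpha+1 \in S\}$ stationary — can be written in one of the three forms $\wf(\mc{C})$, $\wfc(\mc{C})$, or $\wf(\mc{C}) \cup \wfc(\mc{C})$ for a $\bfSigma^1_1$ class $\mc{C}$ of linear orders.

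First I would fix a $\bfSigma^1_1$ class $\mc{D}$ of linear orders of $\omega$ witnessing that $S$ is a $\bfSigma^1_1$ class of ordinals, so that $S = \{ \text{ot}(L) : L \in \mc{D}, L \text{ well-founded}\}$. The idea is to enlarge $\mc{D}$ to a class $\mc{C}$ whose \emph{well-founded parts} realize exactly the ordinals we want: for each $L \in \mc{D}$ that is well-founded of type $\alpha$, we want some $L' \in \mc{C}$ with $\wf(L') = \alpha$ (take $L' = L$, or $L$ with an ill-founded tail appended if we are aiming at $\wfc$), and conversely every $L' \in \mc{C}$ must have $\wf(L')$ or $\wfc(L')$ land in $S$. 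The point of passing to well-founded \emph{parts} rather than well-founded orders is precisely that a $\bfSigma^1_1$ class of linear orders can have ill-founded members whose well-founded parts are large ordinals not themselves the type of any well-founded member — this is the source of the ``$+1$'' phenomenon and of why gaps are constrained by stationarity (the relevant fact being that the set of well-founded parts of ill-founded members of a $\bfSigma^1_1$ class, if unbounded, must be stationary, since an $\omega_1$-branching analysis of the tree plus a pressing-down argument forces large well-founded parts at stationarily many stages). I would then split into cases according to whether $S$ is bounded or unbounded below $\omega_1$: in the bounded case everything is easy since one can just take a class of well-founded orders directly, so only form (1) is needed; in the unbounded case I would use the stationarity hypothesis on $S$ (resp.\ on its predecessor set) to arrange the ill-founded members of $\mc{C}$ so that their well-founded parts, which are automatically stationarily many large ordinals, all lie in $S$ (resp.\ have successors in $S$), putting the spectrum into form $\wf(\mc{C})$ or $\wfc(\mc{C})$ or the union.

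The main obstacle I expect is the bookkeeping needed to hit \emph{exactly} the set $S$ and not accidentally more. A $\bfSigma^1_1$ class $\mc{C}$ of linear orders, viewed through $\wf$, always contributes a closed-unbounded-modulo-stationary amount of structure at the top: if it has ill-founded members at all then the closure properties of $\bfSigma^1_1$ sets force the well-founded parts to be ``rich.'' So to realize a prescribed $S$ exactly, I would need, given the $\bfSigma^1_1$ presentation of $S$, to build $\mc{C}$ so that (a) its well-founded members reproduce the bounded-below-$\omega_1$ behavior of $S$ faithfully, and (b) its ill-founded members' well-founded parts are controlled to sit inside $S$; part (b) is where the stationarity hypothesis is exactly what is needed — a pressing-down / Fodor argument shows no weaker hypothesis suffices, and conversely one can thread the construction through a stationary set. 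Concretely I would likely take $\mc{C}$ to consist of: all well-founded $L \in \mc{D}$; together with, for each club-guessing or stationary-set parameter, orders of the form ``$L$ followed by a copy of $\ZZ$ or of $\QQ$'' where $L$ ranges over initial segments so that the well-founded part is pinned to a target in $S$. The verification that nothing extra sneaks in — that $\wf(\mc{C})$, $\wfc(\mc{C})$ are each contained in $S$ — is the step that will require care, and it is exactly there that one invokes the side conditions of Theorem~\ref{thm:main-result}. Once both containments are checked in each case, Theorem~\ref{thm:main-result2} follows by matching the case division of Theorem~\ref{thm:main-result} to the three forms (1)--(3).
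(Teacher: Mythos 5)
Your forward direction (every set of one of the three forms is a Scott spectrum) is correct and matches the paper: it is Theorems \ref{thm:construction} and \ref{thm:construction2}, together with the technical lemma converting $\bfSigma^1_1$ classes of linear orders into $PC_{\mc{L}_{\omega_1\omega}}$-classes and Proposition \ref{prop:ctble-union} for the union case. The converse is where the gap is. A structural point first: the paper derives Theorem \ref{thm:main-result} \emph{from} the proof of Theorem \ref{thm:main-result2}, not the other way around, so starting from Theorem \ref{thm:main-result} leaves you owing an independent proof of it. More seriously, your reduction of ``$S$ is a $\bfSigma^1_1$ class of ordinals satisfying the stationarity condition'' to ``$S$ has one of the three forms'' is missing its key idea. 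The class you propose is not $\bfSigma^1_1$: ``all well-founded $L \in \mc{D}$'' is the intersection of a $\bfSigma^1_1$ class with a $\bfPi^1_1$ class, and in the unbounded case $\Sigma^1_1$-boundedness forces any $\bfSigma^1_1$ class realizing unboundedly many order types to contain ill-founded members, so they cannot be discarded. The ill-founded members of an arbitrary presentation $\mc{D}$ of $S$ have completely uncontrolled well-founded parts, and appending tails of $\ZZ$ or $\QQ$ to ``initial segments pinned to targets in $S$'' does not help, because asserting that the pinned part is a well-order of type in $S$ is again $\Pi^1_1$-over-$\Sigma^1_1$. Stationarity of $S$ plus a pressing-down argument does not, by itself, repair this.

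The device that actually makes such a reduction work (it appears in the paper's proof of Theorem \ref{thm:main-result}, not of Theorem \ref{thm:main-result2}) is the following. Use PD to find a cone, above $Y$ say, on which exactly one of ``$\omega_1^X \in S$'', ``$\omega_1^X+1 \in S$'', or ``both'' holds; this is where stationarity enters, since $\{\omega_1^X : X \geq_T Y\}$ contains a club (Remark \ref{rem:club}). Then replace $\mc{D}$ by $\mc{C} = \{L \in \mc{D} : L$ embeds as an initial segment of $\mc{H}^X$ for some $X \geq_T Y\}$. This class is $\bfSigma^1_1$, realizes the same well-founded order types, and any ill-founded $L \in \mc{C}$ has $\wf(L)$ \emph{exactly equal to} $\omega_1^X$ for the relevant $X$, hence lands in $S$ (respectively, has successor in $S$) by the cone choice. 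Without this restriction to initial segments of Harrison orders, nothing pins down the well-founded parts of the junk. Note also that the paper's own proof of Theorem \ref{thm:main-result2} avoids arbitrary presentations entirely: Theorem \ref{thm:second-dir} manufactures, from non-standard back-and-forth relations on models of $T$, a specific $\bfSigma^1_1$ class whose only ill-founded members are already Harrison orders, with Lemma \ref{lem:models-admissible} guaranteeing that $\omega_1^X$ or $\omega_1^X+1$ lies in $\scs(T)$; PD then only has to uniformize that choice on a cone.
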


\begin{theorem}[ZFC + PD]\label{thm:all-pi-two}
Each Scott spectrum is the Scott spectrum of a $\Pi^{\infi}_2$ sentence.
\end{theorem}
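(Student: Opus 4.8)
The plan is to combine Theorem \ref{thm:main-result2} with the general construction of Section \ref{sec:gen-const}, observing that the construction already produces $\Pi^{\infi}_2$ sentences. By Theorem \ref{thm:main-result2}, every Scott spectrum $S$ has one of the three forms $\wf(\mc{C})$, $\wfc(\mc{C})$, or $\wf(\mc{C}) \cup \wfc(\mc{C})$ for some $\bfSigma^1_1$ class $\mc{C}$ of linear orders of $\omega$. The general construction takes an $\mc{L}_{\omega_1\omega}$-pseudo-elementary class of linear orders and builds an $\mc{L}_{\omega_1\omega}$-sentence $T$ whose Scott spectrum is the set of well-founded parts (and/or well-founded collapses) of the linear orders in that class. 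First I would check that a $\bfSigma^1_1$ class of linear orders is in particular a $PC_{\mc{L}_{\omega_1\omega}}$-class (a $\bfSigma^1_1$ set is the projection of a Borel, hence $\mc{L}_{\omega_1\omega}$-axiomatizable, set — one codes the witnessing tree or Borel set by an $\mc{L}_{\omega_1\omega}$-sentence in an expanded language, with the extra sorts/relations playing the role of the existential witness). So the input hypothesis of the Section \ref{sec:gen-const} construction is met.

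The key point is then to verify that the sentence $T$ output by the construction is (equivalent to) a $\Pi^{\infi}_2$ sentence. I expect this to follow by inspecting the construction: it builds models out of a linear order together with some uniformly-attached ``gadget'' structures realizing prescribed Scott ranks, and the axioms describing this are of the form ``every tuple extends to a witnessing configuration'' (a $\forall\exists$ statement) together with closure conditions that are themselves $\Pi^{\infi}_2$ or simpler. This is exactly the same phenomenon exploited in Theorem \ref{thm:pi-two}, so I would either cite that the same sentence works, or — if the construction in Section \ref{sec:gen-const} is stated for a more general (higher-complexity) sentence — re-run the argument of Section \ref{sec:pi-two} that lowers the complexity to $\Pi^{\infi}_2$ without changing the class of models (and hence without changing the Scott spectrum). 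Combining: given any Scott spectrum $S$, pick the witnessing $\bfSigma^1_1$ class $\mc{C}$, feed it to the construction, and select the appropriate one of the three variants ($\wf$, $\wfc$, or both) to land on $S$; the resulting $T$ is $\Pi^{\infi}_2$ and has $\scs(T) = S$.

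The main obstacle is the bookkeeping in the second paragraph: one must be certain that passing from a $\bfSigma^1_1$ class to a $PC_{\mc{L}_{\omega_1\omega}}$-class, and then through the Section \ref{sec:gen-const} machinery, never forces quantifier alternations beyond $\Pi^{\infi}_2$. The existential second-order quantifier hidden in ``$\bfSigma^1_1$'' is harmless because pseudo-elementarity absorbs it into extra sorts, and the Borel matrix contributes only $\mc{L}_{\omega_1\omega}$ complexity that the construction is designed to flatten; the delicate part is the interface where the attached gadgets of high Scott rank are glued on, since a naive description of ``this substructure is isomorphic to the rank-$\beta$ gadget'' could be high-complexity. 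This is precisely where the construction's design (already used for Theorem \ref{thm:pi-two}) is needed: the gadgets are chosen to be $\Pi^{\infi}_2$-axiomatizable relative to the linear-order backbone, so the global sentence stays $\Pi^{\infi}_2$. Once that is in hand, the theorem is immediate. \qed
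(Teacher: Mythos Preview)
Your proposal is correct and follows the same route as the paper: the paper's proof is a one-line remark that Theorem \ref{thm:all-pi-two} follows from Theorem \ref{thm:main-result2} together with the fact that the sentences produced by Theorems \ref{thm:construction} and \ref{thm:construction2} can already be taken to be $\Pi^{\infi}_2$ (this is part (1) of Theorem \ref{thm:construction}). The passage from a $\bfSigma^1_1$ class to a $PC_{\mc{L}_{\omega_1\omega}}$-class that you sketch is handled in the paper by the technical lemma of Section 6 (via Lopez-Escobar), and the ``flattening'' to $\Pi^{\infi}_2$ is built into the construction itself through Morleyization of $S^+$, so there is no separate complexity-lowering step to run.
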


\begin{theorem}[ZFC + PD]\label{thm:main-result-pseudo}
Every Scott spectrum of a $PC_{\mc{L}_{\omega_1 \omega}}$-class is the Scott spectrum of an $\mc{L}_{\omega_1 \omega}$-sentence.
\end{theorem}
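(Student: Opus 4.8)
The plan is to show directly that the Scott spectrum of a $PC_{\mc{L}_{\omega_1 \omega}}$-class enjoys the two properties that, by Theorem~\ref{thm:main-result}, characterize the Scott spectra of honest $\mc{L}_{\omega_1 \omega}$-sentences: that it is a $\bfSigma^1_1$ class of ordinals, and that, if it is unbounded below $\omega_1$, then it or its predecessor-set is stationary. Granting these, Theorem~\ref{thm:main-result} (equivalently Theorem~\ref{thm:main-result2}, and then Theorem~\ref{thm:all-pi-two}) at once yields an $\mc{L}_{\omega_1 \omega}$-sentence, in fact a $\Pi^{\infi}_2$ one, with the same Scott spectrum. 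Fix an $\mc{L}_{\omega_1 \omega}$-sentence $T$ in a signature $\mc{L}' \supseteq \mc{L}$ with $\mc{C} = \{\mc{A}' \restriction \mc{L} : \mc{A}' \models T\}$, and write $\scs(\mc{C})$ for the set of Scott ranks of structures in $\mc{C}$.

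First I would check that $\scs(\mc{C})$ is a $\bfSigma^1_1$ class of ordinals. The set of $\mc{L}'$-presentations on $\omega$ satisfying $T$ is Borel, and the reduct map $\mc{A}' \mapsto \mc{A}' \restriction \mc{L}$ is continuous, so, viewed as a class of presentations, $\mc{C}$ is $\bfSigma^1_1$. Note that the $\mc{L}$-reduct of a model of $T$ need not have the same Scott rank as the model itself; so one works with the reducts alone, never directly with the $\mc{L}'$-models. Now attach to each presentation a linear order that computes its Scott rank: let $\mc{D}$ be the class of linear orders $R$ on $\omega$ for which there exist $\mc{A} \in \mc{C}$ and a real $f$ coding a stagewise computation, along $R$, of the symmetric back-and-forth relations on tuples of $\mc{A}$, such that $f$ records no stabilization at any proper initial segment of $R$ but does record stabilization at $R$ itself. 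The predicates on $(\mc{A},R,f)$ expressing ``$f$ is a correct such computation'' and ``$f$ stabilizes exactly at the top'' are Borel, so $\mc{D}$, obtained from the $\bfSigma^1_1$ class $\mc{C}$ by conjoining a Borel condition and projecting away $\mc{A}$ and $f$, is $\bfSigma^1_1$. When $R$ is a well-ordering, such an $f$ exists and is unique, and --- up to the fixed finite offset between the back-and-forth hierarchy and the notion of Scott rank used here, which I would absorb into the definition of $\mc{D}$ --- it stabilizes exactly at $R$ iff the order type of $R$ equals the Scott rank of $\mc{A}$; ill-founded $R$ are irrelevant to $\mc{D} \cap \On$. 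Hence the order types in $\mc{D} \cap \On$ are exactly $\scs(\mc{C})$, as desired.

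Next I would verify the stationarity property. Suppose $\scs(\mc{C})$ is unbounded below $\omega_1$, and let a club $E \subseteq \omega_1$ be given. Choosing $\mc{A}' \models T$ whose $\mc{L}$-reduct has large Scott rank, I would run the L\"owenheim--Skolem/reflection argument used for honest sentences (Section~\ref{sec:scs}) on $\mc{A}'$, building a countable $\mc{B}' \prec \mc{A}'$ that is elementary for a sufficiently large countable fragment of $\mc{L}_{\omega_1 \omega}$ over $\mc{L}'$ --- so that $\mc{B}' \models T$ and $\mc{B}' \restriction \mc{L} \in \mc{C}$ --- and is simultaneously closed under the back-and-forth witnesses of the $\mc{L}$-reduct, so that the Scott rank of $\mc{B}' \restriction \mc{L}$ either lies in $E$ or is the successor of an element of $E$. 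Thus $E$ meets $\scs(\mc{C}) \cup \{\alpha : \alpha + 1 \in \scs(\mc{C})\}$; as $E$ was arbitrary, that union is stationary, so one of its two pieces is. This gives both required properties, and the proof concludes as indicated in the first paragraph.

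The main obstacle, I expect, is the descriptive-complexity bookkeeping of the middle step: one must fix the precise back-and-forth hierarchy matching this paper's convention for Scott rank, with the right finite shifts, and check with care that ``$f$ is a correct stagewise computation stabilizing exactly at the top'' is a Borel condition on $(\mc{A},R,f)$ that is faithful on genuine well-orderings and innocuous on ill-founded $R$. A secondary difficulty is arranging the reflection argument to control the Scott rank of the reduct $\mc{B}' \restriction \mc{L}$ rather than that of $\mc{B}'$ itself, which amounts to running the substructure construction for the $\mc{L}'$-elementarity and the $\mc{L}$-reduct back-and-forth analysis at the same time.
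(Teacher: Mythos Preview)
Your overall strategy---verify the two hypotheses of Theorem~\ref{thm:main-result} for the spectrum of a $PC_{\mc{L}_{\omega_1 \omega}}$-class and then invoke that theorem---is exactly the paper's. The paper dispatches this in one line by observing that the proofs of Lemma~\ref{lem:models-admissible} and Theorems~\ref{thm:second-dir}, \ref{thm:main-result}, and \ref{thm:main-result2} go through unchanged with a $PC$-class in place of a sentence. Your step~1 is a reasonable variant of the construction in Theorem~\ref{thm:second-dir} (the paper codes via non-standard back-and-forth relations with freeness and back-and-forth conditions rather than your ``stagewise computation with stabilization'' coding, but either approach works).

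There is, however, a genuine gap in your step~2. You propose to choose $\mc{A}' \models T$ of ``large Scott rank'' and pass to a countable $\mc{B}' \prec \mc{A}'$ so that the Scott rank of $\mc{B}' \restriction \mc{L}$ lands in the given club $E$. But every model under consideration is already countable, so L\"owenheim--Skolem returns $\mc{A}'$ itself. If you intend $\mc{A}'$ to be uncountable, such a model need not exist: the introduction cites Makkai's example of an $\mc{L}_{\omega_1 \omega}$-sentence with unbounded Scott spectrum and \emph{no} uncountable models. Section~\ref{sec:scs} contains no L\"owenheim--Skolem argument of this kind. The stationarity there is obtained from Lemma~\ref{lem:models-admissible}, which is an application of Gandy's basis theorem: choosing $Y$ so that $T$ is $Y$-computable, for every $X \geq_T Y$ one finds (via overspill on the $\Sigma^1_1(X)$ class of models carrying back-and-forth relations with free tuples at every level) a model $\mc{A}'$ of $T$ with $\omega_1^{\mc{A}',X}=\omega_1^X$ whose $\mc{L}$-reduct has Scott rank $\omega_1^X$ or $\omega_1^X+1$. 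Since $\{\omega_1^X : X \geq_T Y\}$ contains a club (Remark~\ref{rem:club}), stationarity follows, and PD is then used to make the choice between $\omega_1^X$ and $\omega_1^X+1$ uniform on a cone. This argument works verbatim for $PC$-classes---one simply computes Scott rank in the reduct language $\mc{L}$ throughout---and is what should replace your L\"owenheim--Skolem sketch.
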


\noindent We will prove Theorems \ref{thm:main-result}, \ref{thm:main-result2}, \ref{thm:all-pi-two}, and \ref{thm:main-result-pseudo} in Section \ref{sec:scs}. This classification allows us to construct interesting Scott spectra. For example, the successor ordinals and the admissible ordinals are Scott spectra.


%
%
%

\section{Preliminaries on Back-and-forth Relations and Scott Ranks}
All of our structures will be countable structures in a countable language. The infinitary logic $\mc{L}_{\omega_1 \omega}$ consists of formulas which allow countably infinite conjunctions and conjunctions; see \cite[Sections 6 and 7]{AshKnight00} for background. We will use $\Sigma^{\itt}_\alpha$ for the infinitary $\Sigma_\alpha$ formulas and $\Sigma^{\ctt}_\alpha$ for the computable infinitary $\Sigma_\alpha$ formulas (and similarly for $\Pi^{\itt}_\alpha$ and $\Pi^{\ctt}_\alpha$).

\subsection{Scott Rank}\label{ss:ranks}

Let $\mc{A}$ be a countable structure. There are a number of ways to define the Scott rank of $\mc{A}$, not all of which agree. We describe a number of different definitions before fixing one for the rest of the paper. For the most part, it does not matter, modulo some small changes, which definition we choose as our results are quite robust.

The first definition uses the symmetric back-and-forth relations which come from Scott's proof of his isomorphism theorem \cite{Scott65}. See, for example, \cite[Sections 6.6 and 6.7]{AshKnight00}.
\begin{definition}
The \textit{standard symmetric back-and-forth relations} $\sim_\alpha$ on $\mc{A}$, for $\alpha < \omega_1$, are defined by:
\begin{enumerate}
	\item $\bar{a} \sim_0 \bar{b}$ if $\bar{a}$ and $\bar{b}$ satisfy the same quantifier-free formulas.
	\item For $\alpha > 0$, $\bar{a} \sim_\alpha \bar{b}$ if for each $\beta < \alpha$ and $\bar{d}$ there is $\bar{c}$ such that $\bar{a} \bar{c} \sim_\beta \bar{b} \bar{d}$, and for all $\bar{c}$ there is $\bar{d}$ such that $\bar{a} \bar{c} \sim_\beta \bar{b} \bar{d}$.
\end{enumerate}
\end{definition}

For each tuple $\bar{a} \in \mc{A}$, Scott proved that there is a least ordinal $\alpha$, the Scott rank of the tuple, such that if $\bar{a} \sim_\alpha \bar{b}$, then $\bar{a}$ and $\bar{b}$ are in the same automorphism orbit of $\mc{A}$. Equivalently, $\alpha$ is the least ordinal such that if $\bar{a} \sim_\alpha \bar{b}$, then $\bar{a} \sim_\gamma \bar{b}$ for all ordinals $\gamma < \omega_1$, or such that if $\bar{a} \sim_\alpha \bar{b}$, then $\bar{a}$ and $\bar{b}$ satisfy the same $\mc{L}_{\omega_1 \omega}$-formulas. Then the Scott rank of $\mc{A}$ is the least ordinal strictly greater than (or, in the definition used by Barwise \cite{Barwise75}, greater than or equal to) the Scott rank of each tuple of $\mc{A}$. One can then define a \textit{Scott sentence} for $\mc{A}$, that is, a sentence of $\mc{L}_{\omega_1 \omega}$ which characterizes $\mc{A}$ up to isomorphism among countable structures.

Another definition uses the non-symmetric back-and-forth relations which have been useful in computable structure theory. See \cite[Section 6.7]{AshKnight00}.
\begin{definition}\label{def:bf}
The \textit{standard (non-symmetric) back-and-forth relations} $\leq_\alpha$ on $\mc{A}$, for $\alpha < \omega_1$, are defined by:
\begin{enumerate}
	\item $\bar{a} \leq_0 \bar{b}$ if for each quantifier-free formula $\psi(\bar{x})$ with G\"odel number less than the length of $\bar{a}$, if $\mc{A} \models \psi(\bar{a})$ then $\mc{A} \models \psi(\bar{b})$.
	\item For $\alpha > 0$, $\bar{a} \leq_\alpha \bar{b}$ if for each $\beta < \alpha$ and $\bar{d}$ there is $\bar{c}$ such that $\bar{b} \bar{d} \leq_\beta \bar{a} \bar{c}$.
\end{enumerate}
Let $\bar{a} \equiv_\alpha \bar{b}$ if $\bar{a} \leq_\alpha \bar{b}$ and $\bar{b} \leq_\alpha \bar{a}$,
\end{definition}
\noindent For $\alpha \geq 1$, $\bar{a} \leq_\alpha \bar{b}$ if and only if every $\Sigma^{\infi}_1$ formula true of $\bar{b}$ is true of $\bar{a}$.

Then one can define the Scott rank of a tuple $\bar{a}$ to be the least $\alpha$ such that if $\bar{a} \equiv_\alpha \bar{b}$, then $\bar{a}$ and $\bar{b}$ are in the same automorphism orbit of $\mc{A}$. The Scott rank of $\mc{A}$ is then least ordinal strictly greater than the Scott rank of each tuple.

A third definition of Scott rank has recently been suggested by Montalb\'an based on the following theorem:
\begin{theorem}[Montalb\'an \cite{Montalban}]\label{thm:montalban}
Let $\mc{A}$ be a countable structure, and $\alpha$ a countable ordinal. The following are equivalent:
\begin{enumerate}
	\item $\mc{A}$ has a $\Pi^{\infi}_{\alpha+1}$ Scott sentence.
	\item Every automorphism orbit in $\mc{A}$ is $\Sigma^{\infi}_\alpha$-definable without parameters.
	\item $\mc{A}$ is uniformly (boldface) $\mathbf{\Delta}^0_\alpha$-categorical without parameters.
	\item Every $\Pi^{\infi}_\alpha$ type realized in $\mc{A}$ is implied by a $\Sigma^{\infi}_\alpha$ formula.
	\item\label{part:free} No tuple in $\mc{A}$ is $\alpha$-free.
\end{enumerate}
\end{theorem}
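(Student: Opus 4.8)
The plan is to prove the five conditions equivalent by designating condition~(2) --- that every automorphism orbit of $\mc A$ is $\Sigma^{\infi}_\alpha$-definable without parameters, i.e.\ that $\mc A$ has a $\Sigma^{\infi}_\alpha$ Scott family --- as the hub, and establishing $(1)\Leftrightarrow(2)$, $(2)\Leftrightarrow(3)$, and ``$(2)\Leftrightarrow(4)\Leftrightarrow(5)$'' together. The two standard tools I would rely on are: (a) for each tuple $\bar a$ and ordinal $\beta$, the canonical infinitary formulas (in $\Pi^{\infi}_\beta$ and $\Sigma^{\infi}_\beta$) that capture the $\equiv_\beta$-class of $\bar a$, together with the fact that for $\beta \geq 1$ one has $\bar c \leq_\beta \bar b$ precisely when every $\Sigma^{\infi}_\beta$ formula true of $\bar b$ is true of $\bar c$; and (b) Scott's theorem, recalled above, that $\bar a$ and $\bar b$ lie in the same orbit of $\mc A$ iff $\bar a \equiv_\gamma \bar b$ for every $\gamma < \omega_1$.

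Conditions~(2), (4), and~(5) are three packagings of the same local statement --- that the orbit of each tuple of $\mc A$ is $\Sigma^{\infi}_\alpha$-definable --- and I would prove them mutually equivalent by unwinding the definitions against the canonical formulas of~(a). A $\Sigma^{\infi}_\alpha$ definition $\varphi$ of the orbit of $\bar a$ lies in the $\Sigma^{\infi}_\alpha$-type of $\bar a$ and implies its $\Pi^{\infi}_\alpha$-type, since every realization of $\varphi$ is automorphic to $\bar a$; this is exactly~(4). The notion ``$\bar a$ is $\alpha$-free'' is, on the other hand, engineered to be the combinatorial negation of the existence of such a supporting $\Sigma^{\infi}_\alpha$ formula; and when $\bar a$ is \emph{not} $\alpha$-free the witnessing data --- a level $\beta < \alpha$ and a tuple $\bar b$ such that the $\equiv_\beta$-class of $\bar a\bar b$ pins down the orbit of $\bar a$ --- yield the $\Sigma^{\infi}_\alpha$ formula $\exists\bar y\,\psi(\bar x,\bar y)$, $\psi$ the canonical formula for the $\equiv_\beta$-class of $\bar a\bar b$, which a short back-and-forth argument shows defines the orbit. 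The equivalence $(2)\Leftrightarrow(3)$ is the parameter-free, boldface instance of the classical correspondence (in the style of Ash and Knight) between uniform $\mathbf{\Delta}^0_\alpha$-categoricity and the existence of a $\Sigma^{\infi}_\alpha$ Scott family: $\Sigma^{\infi}_\alpha$ orbit definitions feed a $\mathbf{\Delta}^0_\alpha$ back-and-forth that builds the isomorphism between two copies of $\mc A$ by locating the orbit of each element, and conversely a uniform $\mathbf{\Delta}^0_\alpha$ isomorphism procedure, confronted with sufficiently generic enumerations of $\mc A$, reads off $\Sigma^{\infi}_\alpha$ definitions of the orbits.

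The substantive step is $(1)\Leftrightarrow(2)$. For $(2)\Rightarrow(1)$ I would write down, from a $\Sigma^{\infi}_\alpha$ Scott family $\{\varphi_{\bar a}\}$, the usual canonical Scott sentence: the conjunction, over representatives $\bar a$ of the (countably many) orbits, of $\exists\bar x\,\varphi_{\bar a}(\bar x)$, of $\forall\bar x\,[\varphi_{\bar a}(\bar x) \to \bigwedge_c \exists y\,\varphi_{\bar a c}(\bar x,y)]$, and of $\forall\bar x\forall y\,[\varphi_{\bar a}(\bar x) \to \bigvee_c \varphi_{\bar a c}(\bar x,y)]$, together with a clause fixing the atomic type of the empty tuple; a back-and-forth argument shows its countable models are exactly the copies of $\mc A$. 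The one point requiring care is the quantifier count: since $\exists y$ and countable disjunction preserve $\Sigma^{\infi}_\alpha$, each conjunct is a universal quantifier over a Boolean combination of $\Sigma^{\infi}_\alpha$ formulas of the correct polarity, so, modulo the standard closure properties of the infinitary hierarchy, the sentence remains $\Pi^{\infi}_{\alpha+1}$ rather than slipping to $\Pi^{\infi}_{\alpha+2}$. For $(1)\Rightarrow(2)$ I would argue contrapositively through~(5): assuming some tuple $\bar a$ of $\mc A$ is $\alpha$-free, I would build a countable $\mc B \ncong \mc A$ satisfying exactly the same $\Pi^{\infi}_{\alpha+1}$ sentences as $\mc A$ --- impossible once $\mc A$ has a $\Pi^{\infi}_{\alpha+1}$ Scott sentence. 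The structure $\mc B$ would be produced by a forcing/back-and-forth construction in which, at each stage, the $\alpha$-freeness of $\bar a$ supplies a tuple agreeing with $\bar a$ up to some level $\beta < \alpha$ but pushed off its orbit, so that in the limit $\mc B$ realizes an orbit not realized in $\mc A$ yet stays back-and-forth equivalent to $\mc A$ at level $\alpha+1$.

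I expect the construction of $\mc B$ to be the main obstacle. It must simultaneously be driven off the isomorphism type of $\mc A$ and kept indistinguishable from $\mc A$ by every $\Pi^{\infi}_{\alpha+1}$ sentence, and the bookkeeping ensuring the second requirement has to be threaded through every stage --- the delicate point being that $\alpha$-freeness is available only at levels $\beta < \alpha$, yet full level-$(\alpha+1)$ equivalence must be secured in the limit. A secondary, purely syntactic, point is the complexity computation in $(2)\Rightarrow(1)$: verifying that the canonical Scott sentence built from a $\Sigma^{\infi}_\alpha$ Scott family really is no worse than $\Pi^{\infi}_{\alpha+1}$.
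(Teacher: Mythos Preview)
The paper does not prove this theorem: it is stated with attribution to Montalb\'an and a citation to \cite{Montalban}, and is used as a black box throughout (for instance in Lemma~\ref{lem:sr-comp} and in the proof of Theorem~\ref{thm:sh}). There is therefore no proof in the present paper to compare your proposal against.

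For what it is worth, your outline is a reasonable sketch of the argument one finds in Montalb\'an's paper: the hub-and-spoke organization around condition~(2), the identification of $(2)\Leftrightarrow(4)\Leftrightarrow(5)$ as reformulations via the canonical $\Sigma^{\infi}_\beta/\Pi^{\infi}_\beta$ formulas, the Ash--Knight style equivalence $(2)\Leftrightarrow(3)$, and the recognition that $(1)\Rightarrow(2)$ (equivalently, $\neg(5)\Rightarrow\neg(1)$) is the substantive direction requiring a construction of a non-isomorphic $\mc B$ that is $(\alpha+1)$-back-and-forth equivalent to $\mc A$. Your anticipated obstacle is the right one.
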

Montalb\'an defines the Scott rank of $\mc{A}$ to be the least ordinal $\alpha$ such that $\mc{A}$ has a $\Pi^{\infi}_{\alpha+1}$ Scott sentence. It is this definition which we will take as our definition of Scott rank. We write $SR(\mc{A})$ for the Scott rank of the structure $\mc{A}$. The $\alpha$-free tuples which appear in the theorem above will also appear later.
\begin{definition}
Let $\bar{a}$ be a tuple of $\mc{A}$. Then $\bar{a}$ is \textit{$\alpha$-free} if for each $\bar{b}$ and $\beta < \alpha$, there are $\bar{a}'$ and $\bar{b}'$ such that $\bar{a}, \bar{b} \leq_\beta \bar{a}', \bar{b}'$ and $\bar{a}' \nleq_\alpha \bar{a}$.
\end{definition}

Other definitions of Scott rank appear in \cite[Section 2]{Sacks07} and \cite[Section 3]{Gao07}.

\subsection{Scott Spectra}

Recall that the Scott spectrum of an $\mc{L}_{\omega_1 \omega}$-sentence $T$ is the set of countable ordinals
\[ \scs(T) = \{ SR(\mc{A}) : \mc{A} \text{ is a countable model of } T \}. \]
More generally, one can define the Scott spectrum $\scs(\mc{C})$ of a class of countable structures $\mc{C}$.
For each $\alpha < \omega_1$ there is an $\mc{L}_{\omega_1 \omega}$-sentence whose Scott spectrum is $\{\alpha\}$. For example, if $\mc{A}$ is a structure of Scott rank $\alpha$,\footnote{Such structures exist; for example, the results on linear orders in \cite[Section 15]{AshKnight00} can be used to construct examples, or one can use the construction in \cite{CFS}.} then we can take $T$ to be the Scott sentence for $\mc{A}$. However, the quantifier complexity of $T$ will be approximately $\alpha$. It is only as a result of our Theorem \ref{thm:pi-two} that one can obtain such a theory $T$ of low quantifier complexity even when $\alpha$ is very large.

We note some results about producing new Scott spectra by combining existing ones. The proofs are all simple constructions which we omit.

\begin{proposition}\label{prop:ctble-union}
If $\langle \mathfrak{A}_i \rangle_{i \in \omega}$ are the Scott spectra of $\mc{L}_{\omega_1 \omega}$-sentences, then $\bigcup_{i \in \omega} \mathfrak{A}_i$ is also the Scott spectrum of an $\mc{L}_{\omega_1 \omega}$-sentence.
\end{proposition}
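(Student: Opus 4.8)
The plan is to prove Proposition \ref{prop:ctble-union} by a direct disjoint-union construction on the level of sentences. Suppose $T_i$ is an $\mc{L}_{\omega_1\omega}$-sentence with $\scs(T_i) = \mathfrak{A}_i$ for each $i \in \omega$. By renaming symbols we may assume the $T_i$ are sentences in pairwise disjoint languages $\mc{L}_i$; let $\mc{L}^* = \bigsqcup_i \mc{L}_i$ together with one fresh unary predicate $P$. I would write a sentence $T$ in $\mc{L}^*$ asserting: ``$P$ holds of every element; the structure, read in language $\mc{L}_i$, is a model of $T_i$, for exactly one $i$.'' The cleanest bookkeeping is to add a constant or a single new unary predicate $U_i$ for each $i$ with a sentence saying ``exactly one of the $U_i$ is satisfied by some (equivalently every) element,'' and on the $U_i$-branch we conjoin $T_i$. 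Since countable conjunctions and disjunctions are available in $\mc{L}_{\omega_1\omega}$, all of this is expressible: $T$ is $\bigvee_{i\in\omega}\big(U_i\text{-marker sentence} \wedge T_i^{(i)}\big)$, where $T_i^{(i)}$ is $T_i$ relativized appropriately (the relativization is harmless because on that branch the whole universe is the $\mc{L}_i$-structure).

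The key steps, in order, would be: (1) set up the disjoint-language framework and the marking predicates; (2) write down $T$ and check it is a legitimate $\mc{L}_{\omega_1\omega}$-sentence; (3) observe that a countable model $\mc{A}$ of $T$ is, up to the redundant interpretation of the symbols not in the chosen $\mc{L}_i$, just a countable model of $T_i$ for the unique $i$ selected by the marker; (4) conclude $SR(\mc{A}) = SR(\mc{A}\upto\mc{L}_i)$, so $\scs(T) = \bigcup_i \mathfrak{A}_i$; and (5) check the reverse inclusion, namely that every countable model of some $T_i$ expands to a model of $T$ with the same Scott rank, which is immediate by interpreting the marker predicate and the other languages trivially.

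The only point requiring a word of care — and the place I would expect the ``main obstacle,'' though it is still routine — is step (4): why adding a unary predicate that is constant across the whole universe, plus the symbols of the other, unused languages interpreted trivially (empty relations, say), does not change the Scott rank. This is standard: a predicate true of everything, or a relation symbol interpreted as the empty relation, adds no new automorphism-orbit information and no new definability power, so the back-and-forth relations $\sim_\alpha$ (equivalently, by Montalb\'an's theorem, the least $\alpha$ with a $\Pi^{\infi}_{\alpha+1}$ Scott sentence) are unchanged. Formally one checks that $\mc{A}$ and $\mc{A}\upto\mc{L}_i$ have the same automorphisms and that a $\Pi^{\infi}_{\alpha+1}$ Scott sentence for one converts into one for the other by relativizing to $P$ and conjoining the (quantifier-free) axioms describing the trivial symbols. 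Since the paper explicitly says ``the proofs are all simple constructions which we omit,'' I would present only this sketch and leave the verification to the reader.

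\begin{proof}[Proof sketch]
Choose sentences $T_i$ with $\scs(T_i) = \mathfrak{A}_i$ in pairwise disjoint languages $\mc{L}_i$, and let $\mc{L}^* = \bigsqcup_{i\in\omega}\mc{L}_i \cup \{U_i : i \in \omega\}$ with each $U_i$ a new unary predicate. Let $T$ be the $\mc{L}_{\omega_1\omega}$-sentence over $\mc{L}^*$ asserting that there is exactly one $i$ with $U_i$ nonempty, that this $U_i$ holds of all elements, that all symbols of $\mc{L}_j$ for $j \neq i$ are interpreted trivially (empty relations), and that the structure, read in $\mc{L}_i$, models $T_i$. A countable model of $T$ is then, up to the redundant interpretation of the unused symbols and the universally-true predicate $U_i$, exactly a countable model of $T_i$ for the selected $i$; adding a universally-true unary predicate and empty relations changes neither the automorphism group nor the least $\alpha$ admitting a $\Pi^{\infi}_{\alpha+1}$ Scott sentence, so $SR(\mc{A}) = SR(\mc{A}\upto\mc{L}_i)$. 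Hence $\scs(T) \subseteq \bigcup_i \mathfrak{A}_i$. Conversely, any countable model of $T_i$ expands to a model of $T$ of the same Scott rank by setting $U_i$ to the whole universe and all other new symbols trivially, giving the reverse inclusion. Thus $\scs(T) = \bigcup_{i\in\omega}\mathfrak{A}_i$.
\end{proof}
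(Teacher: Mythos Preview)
Your construction is correct and is exactly the sort of ``simple construction'' the paper has in mind; indeed the paper omits the proof entirely, so there is nothing further to compare. The one minor point to keep in mind is that ``interpret the symbols of $\mc{L}_j$ trivially'' needs a word if those languages contain function or constant symbols (pass to relational languages, or fix a default value), but this does not affect the argument.
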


\begin{proposition}\label{prop:cut-off}
If $\mathfrak{A}$ is the Scott spectrum of an $\mc{L}_{\omega_1 \omega}$-sentence and $\alpha < \omega_1$, then $\mathfrak{B} = \{\beta \in \mathfrak{A} : \beta \geq \alpha \}$ is also the Scott spectrum of an $\mc{L}_{\omega_1 \omega}$-sentence.
\end{proposition}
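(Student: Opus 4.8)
The plan is to reduce everything to the classical fact that, for a fixed $\alpha < \omega_1$, the class of countable structures $\mc{A}$ with $SR(\mc{A}) \geq \alpha$ is $\mc{L}_{\omega_1 \omega}$-axiomatizable, say by a sentence $\sigma_{\geq \alpha}$. Granting this, take $T' = T \wedge \sigma_{\geq \alpha}$. The countable models of $T'$ are exactly the countable models of $T$ of Scott rank at least $\alpha$, so
\[ \scs(T') \;=\; \{\, SR(\mc{A}) : \mc{A} \text{ a countable model of } T,\ SR(\mc{A}) \geq \alpha \,\} \;=\; \{ \beta \in \mathfrak{A} : \beta \geq \alpha \} \;=\; \mathfrak{B}, \]
which is the desired conclusion. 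Thus the only real content is producing $\sigma_{\geq \alpha}$.

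\textbf{Building $\sigma_{\geq \alpha}$.} First I would note that the non-symmetric back-and-forth relations of Definition \ref{def:bf} are uniformly $\mc{L}_{\omega_1 \omega}$-definable: by transfinite recursion on $\beta$ one builds, for each $\beta < \omega_1$ and each $n \in \omega$, an $\mc{L}_{\omega_1 \omega}$-formula $\mathrm{bf}^{n}_{\beta}(\bar{x},\bar{y})$ with $|\bar{x}| = |\bar{y}| = n$, by letting $\mathrm{bf}^{n}_{0}$ be the finite conjunction of the implications $\psi(\bar{x}) \to \psi(\bar{y})$ over quantifier-free $\psi$ of small G\"odel number, and setting
\[ \mathrm{bf}^{n}_{\beta}(\bar{x},\bar{y}) \;=\; \bigwedge_{\gamma < \beta}\ \bigwedge_{k \in \omega}\ \forall \bar{d}\, \exists \bar{c}\ \mathrm{bf}^{\,n+k}_{\gamma}(\bar{y}\bar{d},\, \bar{x}\bar{c}), \qquad |\bar{c}| = |\bar{d}| = k. \]
Each $\mathrm{bf}^{n}_{\beta}$ is a legitimate $\mc{L}_{\omega_1 \omega}$-formula (the recursion only invokes countably many formulas of strictly smaller rank), and a routine induction gives $\mc{A} \models \mathrm{bf}^{n}_{\beta}(\bar{a},\bar{b})$ iff $\bar{a} \leq_\beta \bar{b}$ in $\mc{A}$; this is classical, see \cite[Sections 6.6 and 6.7]{AshKnight00}. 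Transcribing the definition of $\beta$-freeness using these formulas then yields an $\mc{L}_{\omega_1 \omega}$-formula $\mathrm{free}_\beta(\bar{x})$ expressing ``$\bar{x}$ is $\beta$-free,'' and hence the $\mc{L}_{\omega_1 \omega}$-sentence $\sigma_{>\beta} := \bigvee_{n \in \omega} \exists \bar{x}\, \mathrm{free}_\beta(\bar{x})$ expresses ``$\mc{A}$ has a $\beta$-free tuple.'' By Theorem \ref{thm:montalban} (the negation of condition (\ref{part:free})), $\mc{A} \models \sigma_{>\beta}$ iff $\mc{A}$ has no $\Pi^{\infi}_{\beta+1}$ Scott sentence, i.e.\ iff $SR(\mc{A}) > \beta$. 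Finally set $\sigma_{\geq 0}$ to be a valid sentence, $\sigma_{\geq \gamma+1} := \sigma_{>\gamma}$, and $\sigma_{\geq \lambda} := \bigwedge_{\beta < \lambda}\sigma_{>\beta}$ for limit $\lambda$; in every case $\sigma_{\geq \alpha}$ is an $\mc{L}_{\omega_1 \omega}$-sentence holding in $\mc{A}$ exactly when $SR(\mc{A}) \geq \alpha$.

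\textbf{Main obstacle.} There is essentially no obstacle: the two non-bookkeeping ingredients are the existence of the uniform back-and-forth formulas $\mathrm{bf}^{n}_{\beta}$ (classical) and the equivalence of ``$SR(\mc{A}) \leq \beta$'' with ``no tuple of $\mc{A}$ is $\beta$-free'' (Theorem \ref{thm:montalban}), after which the construction is just the conjunction $T' = T \wedge \sigma_{\geq \alpha}$. This is why the proof is only sketched; the case analysis on whether $\alpha$ is $0$, a successor, or a limit is the only thing requiring care, and it is immediate.
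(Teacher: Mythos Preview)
Your proof is correct and is the natural approach: since the paper omits the proof entirely (calling it a ``simple construction''), there is nothing to compare against, but conjoining $T$ with an $\mc{L}_{\omega_1\omega}$-sentence $\sigma_{\geq\alpha}$ expressing ``Scott rank $\geq\alpha$'' is precisely the expected argument, and your derivation of $\sigma_{\geq\alpha}$ from the definability of the back-and-forth relations together with Theorem~\ref{thm:montalban}(\ref{part:free}) is clean and accurate.
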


\begin{proposition}\label{prop:change-spec}
Let $\mathfrak{A}$ and $\mathfrak{B}$ be sets of countable ordinals, and suppose that $\mathfrak{A}$ is the Scott spectrum of an $\mc{L}_{\omega_1 \omega}$-sentence. If there is a countable ordinal $\alpha < \omega_1$ such that
\[ \mathfrak{A} \cap \{ \beta : \alpha \leq \beta < \omega_1\} = \mathfrak{B} \cap \{ \beta : \alpha \leq \beta < \omega_1\} \]
then $\mathfrak{B}$ is also the Scott spectrum of an $\mc{L}_{\omega_1 \omega}$-sentence.
\end{proposition}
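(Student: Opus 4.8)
The plan is to split $\mathfrak{B}$ into its \emph{tail} $\mathfrak{B}_{\geq\alpha} := \mathfrak{B}\cap\{\beta:\alpha\le\beta<\omega_1\}$ and its \emph{bounded part} $\mathfrak{B}_{<\alpha} := \mathfrak{B}\cap\{\beta:\beta<\alpha\}$, show that each of these is a Scott spectrum for completely different reasons, and then glue them back together using Proposition~\ref{prop:ctble-union}.

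First I would dispose of the tail. Since every element of $\mathfrak{A}$ is a countable ordinal, the set $\{\beta\in\mathfrak{A}:\beta\ge\alpha\}$ is literally $\mathfrak{A}\cap\{\beta:\alpha\le\beta<\omega_1\}$, which by hypothesis equals $\mathfrak{B}_{\geq\alpha}$. Applying Proposition~\ref{prop:cut-off} to the sentence with Scott spectrum $\mathfrak{A}$ and to the ordinal $\alpha$ therefore produces an $\mc{L}_{\omega_1\omega}$-sentence whose Scott spectrum is exactly $\mathfrak{B}_{\geq\alpha}$.

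Next I would handle the bounded part. Because $\alpha<\omega_1$, the set $\{\beta:\beta<\alpha\}$ is countable, hence so is $\mathfrak{B}_{<\alpha}$; write $\mathfrak{B}_{<\alpha}=\{\gamma_i:i\in\omega\}$ (repeating entries if it is finite, and treating $\mathfrak{B}_{<\alpha}=\emptyset$ via an unsatisfiable sentence). Recall from the discussion of Scott spectra above that for every $\gamma<\omega_1$ there is an $\mc{L}_{\omega_1\omega}$-sentence whose Scott spectrum is $\{\gamma\}$ (e.g.\ the Scott sentence of a structure of Scott rank $\gamma$). So each singleton $\{\gamma_i\}$ is a Scott spectrum, and Proposition~\ref{prop:ctble-union} gives that $\mathfrak{B}_{<\alpha}=\bigcup_{i\in\omega}\{\gamma_i\}$ is the Scott spectrum of an $\mc{L}_{\omega_1\omega}$-sentence.

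Finally, $\mathfrak{B}=\mathfrak{B}_{\geq\alpha}\cup\mathfrak{B}_{<\alpha}$ is a union of two Scott spectra, so one more application of Proposition~\ref{prop:ctble-union} (to the two-term sequence) shows $\mathfrak{B}$ is itself a Scott spectrum, as desired. I do not expect a genuine obstacle here: the entire content is carried by Propositions~\ref{prop:ctble-union} and~\ref{prop:cut-off} together with the existence of singleton Scott spectra, and the only things needing care are the bookkeeping observations that $\{\beta:\beta<\alpha\}$ is countable (using $\alpha<\omega_1$) and that the degenerate empty case is covered.
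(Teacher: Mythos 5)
Your proposal is correct, and since the paper explicitly omits the proofs of Propositions \ref{prop:ctble-union}--\ref{prop:change-spec} as ``simple constructions,'' your argument --- cutting off $\mathfrak{A}$ at $\alpha$ via Proposition \ref{prop:cut-off}, realizing the countable bounded part as a countable union of singleton spectra, and gluing with Proposition \ref{prop:ctble-union} --- is exactly the intended route using precisely the ingredients the paper sets up (including the existence of singleton Scott spectra and the empty spectrum of an unsatisfiable sentence). No gaps.
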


\subsection{Non-standard Back-and-Forth Relations}

Let $(L,\leq)$ be a linear order. We will consider $(L,\leq)$ to be a \textit{non-standard ordinal}, i.e., a linear ordering with an initial segment which is an ordinal, but whose tail may not necessarily be well-ordered. Assume that $L$ has a smallest element $0$.

\begin{definition}\label{def:nsbf}
A sequence of equivalence relations $(\precsim_\alpha)_{\alpha \in L}$ are \textit{non-standard back-and-forth relations} on $\mc{A}$ if they satisfy the definition of the standard back-and-forth relations (Definition \ref{def:bf}), that is, if:
\begin{enumerate}
	\item If $\alpha$ is the smallest element of $L$, $\bar{a} \precsim_\alpha \bar{b}$ if for each quantifier-free formula $\psi(\bar{x})$ with G\"odel number less than the length of $\bar{a}$, if $\mc{A} \models \psi(\bar{a})$ then $\mc{A} \models \psi(\bar{b})$.
	\item If $\alpha$ is not the smallest element of $L$, $\bar{a} \precsim_\alpha \bar{b}$ if for each $\beta < \alpha$, for all $\bar{d}$ there is $\bar{c}$ such that $\bar{b}, \bar{d} \precsim_\beta \bar{a}, \bar{c}$.
\end{enumerate}
\end{definition}
While the standard back-and-forth relations are uniquely defined, this is not the case for non-standard back-and-forth relations. However, they are uniquely determined on the well-founded part of $L$.

\begin{remark}
Let $(L,<)$ be a linear order and $(\precsim_\alpha)_{\alpha \in L}$ a sequence of non-standard back-and-forth relations on $\mc{A}$. The relations $\precsim_\alpha$ for $\alpha \in \wf(L)$ are the same as the standard back-and-forth relations $\leq_\alpha$ on $\mc{A}$.
\end{remark}

For non-standard $\alpha \in L$, that is, $\alpha \in L \setminus \wf(L)$, the back-and-forth relations hold only between tuples in the same automorphism orbit.

\begin{lemma}\label{lem:non-standard-bf}
Let $(L,<)$ be a linear order and $(\precsim_\alpha)_{\alpha \in L}$ a sequence of non-standard back-and-forth relations on $\mc{A}$. For $\alpha \in L \setminus \wf(L)$, if $\bar{a} \precsim_\alpha \bar{b}$, then there is an isomorphism of $\mc{A}$ taking $\bar{a}$ to $\bar{b}$.
\end{lemma}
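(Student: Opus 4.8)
The plan is to exploit the fact that $\alpha$ lies in the ill-founded part of $L$, so there is an infinite strictly descending sequence $\alpha > \alpha_0 > \alpha_1 > \alpha_2 > \cdots$ in $L$, all of whose terms are still $\geq$ the smallest element of $L$ (indeed all in $L \setminus \wf(L)$, or at least cofinally descending into $\wf(L)$ is impossible since $\wf(L)$ is well-founded, so we may take all $\alpha_n \notin \wf(L)$). The key point is that the back-and-forth clause at a non-standard level $\alpha$ can be iterated \emph{infinitely often} along such a descending sequence, because unlike in the well-founded case the recursion never bottoms out at level $0$. This is exactly what is needed to run a genuine back-and-forth argument and produce an isomorphism, rather than merely a finite approximation.

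Concretely, I would first fix an infinite descending sequence $\alpha = \beta_{-1} > \beta_0 > \beta_1 > \cdots$ of elements of $L \setminus \wf(L)$ (possible since the ill-founded part has no minimal element, and is an initial segment's complement, hence downward-closed-enough for this purpose). Enumerate $\mc{A} = \{e_0, e_1, e_2, \ldots\}$. Then build a sequence of tuples $\bar{a} = \bar{a}_{-1}, \bar{a}_0, \bar{a}_1, \ldots$ and $\bar{b} = \bar{b}_{-1}, \bar{b}_0, \bar{b}_1, \ldots$ by a standard alternation: having $\bar{a}_{n-1} \precsim_{\beta_{n-1}} \bar{b}_{n-1}$, at an even stage we are handed $e_n$ as a required element on the $\bar{b}$-side; applying the defining clause (2) of Definition \ref{def:nsbf} at level $\beta_{n-1}$ with $\beta_n < \beta_{n-1}$ and $\bar d = \bar b_{n-1}, e_n$, we obtain $\bar c$ with $\bar b_{n-1}, e_n \precsim_{\beta_n} \bar a_{n-1}, \bar c$; set $\bar b_n = \bar b_{n-1} e_n$, $\bar a_n = \bar a_{n-1} \bar c$. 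At odd stages reverse the roles to force $e_n$ into the range on the $\bar{a}$-side — here one must be slightly careful, since clause (2) only gives the back-and-forth in one direction, but since $\beta_n \notin \wf(L)$ one can choose $\beta_n' < \beta_n$ still outside $\wf(L)$ and apply the clause again "the other way"; alternatively, note that $\precsim_{\beta_n}$ for non-standard $\beta_n$ already holds only between automorphic tuples, so it is in particular symmetric and one may apply clause (2) with the roles of $\bar a$ and $\bar b$ swapped. Either way, $\bigcup_n \bar{a}_n$ and $\bigcup_n \bar{b}_n$ both exhaust $\mc{A}$, and the relation $\bar a_n \precsim_{\beta_n} \bar b_n$ at each finite stage (together with clause (1), once we reach — or rather, since we never reach — the base: the $\precsim_0$ level controls quantifier-free formulas up to a growing Gödel bound, and the descending chain passes eventually below every element of $\wf(L)$... ) guarantees that the induced map $f \colon \bigcup \bar a_n \mapsto \bigcup \bar b_n$ preserves all atomic and negated-atomic formulas, hence is an isomorphism of $\mc{A}$ sending $\bar a$ to $\bar b$.

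The one genuine subtlety — and I expect it to be the main obstacle — is ensuring that the quantifier-free back-and-forth is actually captured in the limit. At a non-standard level $\beta_n$ the clause only quantifies over $\beta < \beta_n$, and if the descending sequence $(\beta_n)$ stays entirely above all of $\wf(L)$ then we never invoke clause (1) at all. The resolution is that $\wf(L)$ is an \emph{initial segment}, so the descending sequence $(\beta_n)$, being infinite and strictly decreasing, must eventually drop below any prescribed standard ordinal $m \in \wf(L)$ — wait, no: an infinite descending sequence in the ill-founded part need not enter $\wf(L)$ at all (e.g. $L = \omega + \mathbb{Z}^-$). So instead, at stage $n$ one should, \emph{in addition} to the element-chasing, also descend through finitely many levels into $\wf(L)$: since $\bar a_n \precsim_{\beta_n} \bar b_n$ with $\beta_n \notin \wf(L)$, we have $\beta_n > k$ for every $k \in \wf(L)$ (as $\wf(L)$ is an initial segment), so in particular $\bar a_n \precsim_{n} \bar b_n$ in the \emph{standard} relation $\leq_n$ on $\mc{A}$ (using the Remark identifying $\precsim_k$ with $\leq_k$ for $k \in \wf(L)$, and monotonicity of the standard relations). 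Hence $\bar a_n \leq_0 \bar b_n$ for all $n$, which by clause (1) / Definition \ref{def:bf}(1) says $\bar a_n$ and $\bar b_n$ satisfy the same quantifier-free formulas with Gödel number below $|\bar a_n|$; as $|\bar a_n| \to \infty$, the limit map $f$ preserves all quantifier-free formulas and is therefore an isomorphism. This "pass through the standard part" step is really the crux: it converts the non-standard hypothesis into the infinitely-iterable back-and-forth that an actual isomorphism requires.
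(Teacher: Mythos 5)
Your proposal is correct and is essentially the paper's argument in unwound form: the paper simply observes that the family of finite maps $\{\bar{a}\mapsto\bar{b} : \bar{a}\precsim_\beta\bar{b} \text{ for some }\beta\in L\setminus\wf(L)\}$ has the back-and-forth property (because $L\setminus\wf(L)$ has no least element, so the defining clause can always be iterated at another non-standard level) and that such pairs are atomically equivalent, and then invokes the standard extension to an automorphism. Your explicit construction along a fixed descending sequence, together with the observation that non-standard levels lie above all of $\wf(L)$ and hence force atomic agreement, is the same argument carried out by hand.
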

\begin{proof}
It is easy to see that
\[ \{ \bar{a} \mapsto \bar{b} : \bar{a} \leq_\beta \bar{b} \text{ for some } \beta \in L \setminus \wf(L)\} \]
is a set of finite maps with the back-and-forth property. If $\bar{a} \leq_\beta \bar{b}$ for some $\beta \in L \setminus \wf(L)$, then $\bar{a}$ and $\bar{b}$ satisfy the same atomic sentences. Thus any such map extends to an automorphism.
\end{proof}

\subsection{Admissible ordinals and Harrison linear orders}\label{sub:harrison}

Given $X \subseteq \omega$, $\omega_1^X$ is the least non-$X$-computable ordinal. By a theorem of Sacks \cite{Sacks76}, the countable admissible ordinals $\alpha > \omega$ are all of the form $\omega_1^{X}$ for some set $X$. For our purposes, we may take this as the definition of an admissible ordinal.

Harrison \cite{Harrison68} showed that for each $X \subseteq \omega$, there is an $X$-computable ordering which is not well-ordered, but which has no $X$-hyperarithmetic descending sequence. Moreover, any such ordering is of order type $\omega_1^X \cdot (1 + \mathbb{Q}) + \beta$ for some $X$-computable ordinal $\beta$. We call $\omega_1^X \cdot (1 + \mathbb{Q})$ the Harrison linear order relative to $X$. Note that the property of being the Harrison linear order relative to $X$ is $\Sigma^1_1(X)$: a linear order is the Harrison linear order relative to $X$ if:
\begin{enumerate}
	\item it is $X$-computable,
	\item for every $X$-computable ordinal $\alpha$ and element $x$, there is $y$ such that the interval $[x,y)$ has order type $\alpha$,
	\item it has a descending sequence, and
	\item for every $X$-computable ordinal $\alpha$ and index $e$ there is a jump hierarchy on $\alpha$ which witnesses that $\varphi_e^{0^{(\alpha)}}$ is not a descending sequence.
\end{enumerate}

Later we will use the fact that the set of admissible ordinals contains a club.

\begin{definition}
A set $U \subseteq \omega_1$ is closed unbounded (club) if it is unbounded below $\omega_1$ and is closed in the order topology, i.e., if $\sup(U \cap \alpha) = \alpha \neq 0$, then $\alpha \in U$.
\end{definition}

\begin{definition}
A set $U \subseteq \omega_1$ is stationary if it intersects every club set.
\end{definition}

\begin{remark}\label{rem:club}
Given a set $Y \subseteq \omega$, the set of $\alpha < \omega_1$ such that $L_{\alpha}[Y]$ is an elementary substructure of $L_{\omega_1}[Y]$ is a club. Hence the set $\{ \omega_1^X \colon X \geq_T Y\}$ contains a club. (Recall also that every club is a stationary set.)
\end{remark}

\section{The Main Construction}\label{sec:gen-const}
In this section we will do the main work of this paper by giving the general construction used in the applications. Given an $\mc{L}_{\omega_1 \omega}$-pseudo-elementary class $\mc{S}$ of linear orders, we will build a theory $T$ whose models have Scott ranks in correspondence with the linear orders in $\mc{S}$.

\begin{theorem}\label{thm:construction}
Let $\mc{S}$ be a $PC_{\mc{L}_{\omega_1 \omega}}$-class of linear orders. Then there a an $\mc{L}_{\omega_1 \omega}$-sentence $T$ such that
\[ \scs(T) = \{ \wfc(L) : L \in \mc{S} \}.\]
Moreover, suppose that $\mc{S}$ is the class of reducts of a sentence $S$. Then:
\begin{enumerate}
	\item We can choose $T$ to be $\Pi^{\infi}_2$ (or $\Pi^{\comp}_2$ if $S$ is computable).
	\item If $L$ is a computable model of $S$ with a computable successor relation, then there is a computable model $\mc{M}$ of $T$ with $SR(\mc{M}) = \wfc(L)$.
\end{enumerate}
\end{theorem}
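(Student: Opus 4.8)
I want to build, from a $PC_{\mc{L}_{\omega_1\omega}}$-class $\mc{S}$ of linear orders (say $\mc{S}$ is the class of $\mc{L}$-reducts of models of a sentence $S$ in the language $\mc{L}' \supseteq \{\leq\}$), a sentence $T$ whose countable models have Scott ranks exactly $\{\wfc(L) : L \in \mc{S}\}$. The guiding idea is to associate to each linear order $L$ a structure $\mc{M}_L$ whose Scott rank ``measures'' how far up the well-founded part of $L$ the back-and-forth relations stay nontrivial, and to arrange that $\mc{M}_L$ collapses at the non-well-founded part. Concretely, for each $\alpha \in L$ I want a family of elements of $\mc{M}_L$ whose orbits become distinguishable only at back-and-forth level $\alpha$ — so that for $\alpha$ in the well-founded part $\wf(L)$ the structure genuinely has tuples of Scott rank $\alpha$, while by Lemma~\ref{lem:non-standard-bf} the non-standard levels $\alpha \in L \setminus \wf(L)$ all collapse to the same automorphism orbit. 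The upshot should be that $SR(\mc{M}_L)$ equals $\wf(L)$ if $L$ is well-founded and $\wf(L)+1 = \wfc(L)$ if $L$ is not, which is exactly $\wfc(L)$ in both cases.

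**Key steps, in order.** First, fix a concrete gadget: following Ash–Knight-style constructions (and in the spirit of Marker~\cite{Marker90}), I would use linear-order-like or tree-like components indexed by elements of $L$, designed so that a component attached at position $\alpha$ is $\equiv_\beta$ to a ``trivial'' component for all $\beta < \alpha$ but not $\equiv_\alpha$; the precise combinatorics of the standard back-and-forth relations on linear orders (or on the $\mathbb{Q}$-colorings / block structures from \cite[Section~15]{AshKnight00}) give exactly such families. Second, assemble $\mc{M}_L$ from these components together with enough extra structure to (a) make the successor relation and the ordering on the index set $L$ first-order definable in a $\Pi^{\infi}_2$ way, and (b) ensure no tuple is $(\wfc(L))$-free, so that $SR(\mc{M}_L) \le \wfc(L)$, while producing a witnessing $(\wf(L)-1)$-free tuple (or the appropriate free tuple) when $L$ is well-founded to pin down the rank from below. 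Third — the key device for getting a single sentence — I would write $T$ so that its models are precisely the structures of the form $\mc{M}_L$ for $L \models S$: quantify over the (hidden) $\mc{L}'$-structure via the pseudo-elementary trick, and add $\Pi^{\infi}_2$ axioms saying ``the component attached at each index is exactly the prescribed gadget, built correctly from the back-and-forth data''. Since $S$ may be $\mc{L}_{\omega_1\omega}$ and the gadget axioms are $\Pi^{\infi}_2$, the conjunction is $\mc{L}_{\omega_1\omega}$; if $S$ is $\Pi^{\comp}_2$ and computable we can keep everything $\Pi^{\comp}_2$ and computable, giving part~(1). Fourth, for part~(2): if $L$ is computable with computable successor relation, all the gadgets are uniformly computable and can be assembled effectively, so $\mc{M}_L$ is a computable model of $T$ with $SR(\mc{M}_L) = \wfc(L)$.

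**Verification of the Scott rank.** The heart of the argument is the rank computation $SR(\mc{M}_L) = \wfc(L)$, which I would do via the non-standard back-and-forth relations of Definition~\ref{def:nsbf}. One defines $\precsim_\alpha$ on $\mc{M}_L$ for $\alpha \in L$ by the component structure; these are non-standard back-and-forth relations, so on $\wf(L)$ they agree with the standard $\leq_\alpha$, and on $L \setminus \wf(L)$ Lemma~\ref{lem:non-standard-bf} forces $\bar a \precsim_\alpha \bar b$ to imply $\bar a, \bar b$ are in the same orbit. The design of the gadgets should ensure: (i) for each $\alpha \in \wf(L)$ there is a tuple with genuine Scott rank $\alpha$ (its orbit is not $\Sigma^{\infi}_\alpha$-definable, witnessed by $\alpha$-freeness); (ii) no tuple is $(\wfc(L)+1)$-free, because the $\precsim$-relation at a non-standard level already identifies orbits. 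Then Theorem~\ref{thm:montalban}(\ref{part:free}) gives $SR(\mc{M}_L) = \wfc(L)$.

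**Main obstacle.** The delicate point is not the pseudo-elementary packaging but pinning the Scott rank \emph{exactly}: I must guarantee the gadget attached at index $\alpha$ contributes a tuple of Scott rank precisely $\alpha$ (no more, no less), uniformly in $L$, \emph{and} that in the non-well-founded case the collapse happens at exactly $\wf(L)+1$ and not lower — i.e.\ the well-founded part of $L$ is faithfully reflected as genuine back-and-forth complexity in $\mc{M}_L$, with nothing ``leaking'' to make the rank smaller. This requires choosing the components so that their standard back-and-forth relations are computed correctly and the global structure does not introduce unexpected definability collapsing orbits prematurely; controlling this, and simultaneously keeping all the axioms $\Pi^{\infi}_2$ (resp.\ $\Pi^{\comp}_2$), is where the real work lies, and is presumably the content of Section~\ref{sec:gen-const}.
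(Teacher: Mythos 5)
Your outline correctly identifies the target architecture --- a structure $\mc{M}_L$ carrying non-standard back-and-forth relations indexed by $L$, collapse of orbits on the ill-founded part via Lemma \ref{lem:non-standard-bf}, and the rank computation through Theorem \ref{thm:montalban} --- and this is indeed the shape of the paper's argument. But you defer exactly the step that carries all the difficulty, and the way you propose to carry it out would fail. You say you will ``add $\Pi^{\infi}_2$ axioms saying the component attached at each index is exactly the prescribed gadget, built correctly from the back-and-forth data.'' The problem is that being a (non-standard) back-and-forth family is not a $\Pi^{\infi}_2$ property: Definition \ref{def:nsbf} has a $\forall\bar d\,\exists\bar c$ alternation at every level, and likewise the statement that an Ash--Knight-style component attached at $\alpha$ is $\equiv_\beta$-equivalent to a trivial one for all $\beta<\alpha$ but not $\equiv_\alpha$-equivalent has quantifier complexity on the order of $\alpha$, not $2$. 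A naive axiomatization therefore produces a sentence of unbounded complexity, which is precisely what part (1) of the theorem forbids and what makes the result nontrivial.

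The paper's resolution, which is the actual content of Section \ref{sec:gen-const} and is absent from your proposal, has two ingredients. First, the models are Fra\"{\i}ss\'e limits of finite trees equipped with a function $E(x,y)$ taking values in $\{-\infty\}\cup (L\times\omega)$ recording the level at which $x$ and $y$ cease to be back-and-forth equivalent; the ``extension'' half of the back-and-forth property then comes from ultrahomogeneity (via the amalgamation argument of Lemma \ref{lem:second-fraisse} and the extension Lemma \ref{lem:extension}), while the unary predicates of \ref{A2} and \ref{A1} reintroduce just enough rigidity to destroy ultrahomogeneity in a controlled way. Second, and crucially, the $\omega$-coordinate of $E$ together with the labelling $\varepsilon$ acts as a Skolem function converting the existential witness of back-and-forth failure (``some child $x'$ of $x$ has no matching child of $y$ at level $E_{L}(x,y)$'') into the universal axiom \ref{Q4} (``every child $x'$ with $\varepsilon(x')\geq E_\omega(x,y)$ has no such match''), which is $\Pi^{\infi}_2$. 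Without this device or an equivalent one, the quantifier-complexity claim cannot be obtained, and the exactness of the rank computation (your stated ``main obstacle'') is never established. As written, the proposal is a plausible research plan whose central step remains open, rather than a proof.
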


With a little more work, we can replace the well-founded collapse with the well-founded part:

\begin{theorem}\label{thm:construction2}
In Theorem \ref{thm:construction}, we can also get
\[ \scs(T) = \{ \wf(L) : L \in \mc{S} \}.\]
\end{theorem}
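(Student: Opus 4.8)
The plan is to modify the construction from Theorem \ref{thm:construction} so that the Scott ranks of models track $\wf(L)$ rather than $\wfc(L)$. Recall that the two quantities differ only when $L$ is not well-founded, in which case $\wfc(L) = \wf(L) + 1$. So the only models whose Scott ranks need adjusting are those coming from ill-founded $L \in \mc{S}$: in the original construction such an $L$ produces a model of Scott rank $\wf(L)+1$, and we want instead a model of Scott rank $\wf(L)$. (When $L$ is well-founded there is nothing to change, since $\wfc(L) = \wf(L)$.)

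The key idea is to recall, via Lemma \ref{lem:non-standard-bf}, why the ill-founded case gives rank $\wf(L)+1$ rather than $\wf(L)$ in the first place: the non-standard back-and-forth relations $\precsim_\alpha$ for $\alpha \in L \setminus \wf(L)$ collapse to the automorphism-orbit relation, so the "extra" level $\wf(L)$ of back-and-forth is needed to separate orbits, forcing the Scott rank up by one. To shave off that level, I would arrange that the model built over an ill-founded $L$ already has all its orbits $\Sigma^{\infi}_{\wf(L)}$-definable — equivalently (by Theorem \ref{thm:montalban}(\ref{part:free})) that it has no $\wf(L)$-free tuples. Concretely, I would add to each element of the structure some additional "marking" data, coded by a new relation or a family of new unary predicates, so that the at-level-$\alpha$ behavior of a tuple already determines its orbit for $\alpha$ in a cofinal (from below) set of $\wf(L)$, i.e. one level earlier than before. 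In the well-founded case this extra data should be harmless — it should be definable at a bounded level and not raise the Scott rank. An alternative, probably cleaner packaging: run the construction of Theorem \ref{thm:construction} not on $\mc{S}$ itself but on a modified pseudo-elementary class $\mc{S}'$ obtained by, for each ill-founded $L$, replacing it by the order $L$ with its ill-founded tail "pushed down by one" — but since one cannot uniformly do this to an arbitrary linear order in $\mc{L}_{\omega_1\omega}$, the marking approach inside the structure is the robust route.

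The steps I would carry out, in order: (1) revisit the proof of Theorem \ref{thm:construction} and isolate exactly where the $+1$ enters for ill-founded $L$ — it is the single application of Lemma \ref{lem:non-standard-bf} showing $\precsim_\alpha$ collapses to the orbit relation for non-standard $\alpha$, together with the fact that the least standard level at which orbits are separated is $\sup\wf(L)$. (2) Augment the signature of the expanded language $\mc{L}'$ with finitely many (or countably many) new symbols and add $\Pi^{\infi}_2$ (resp.\ $\Pi^{\comp}_2$, to preserve clause (1) of Theorem \ref{thm:construction}) axioms to $S$ governing them, designed so that in any model, the orbit of a tuple is determined by its $\precsim_\beta$-class for some $\beta$ strictly below the first non-standard level — when $L$ is ill-founded this $\beta$ can be taken in $\wf(L)$, dropping the Scott rank to $\wf(L)$; when $L$ is well-founded the new data is redundant/bounded and does not change $SR = \wf(L)$. (3) Verify that the new symbols can be chosen so that the reducts to the original language are unchanged (so $\mc{S}$ is still exactly the class of reducts, or is modified in a way irrelevant to the conclusion — actually what matters is only the Scott spectrum, not the reduct class), and that clauses (1) and (2) of Theorem \ref{thm:construction} (the $\Pi_2$ bound and computable-model transfer) survive the modification. (4) Conclude $\scs(T) = \{\wf(L) : L \in \mc{S}\}$ by splitting into the well-founded and ill-founded cases and citing the computation of $\scs$ in Theorem \ref{thm:construction} with the rank in the ill-founded case now lowered by exactly one.

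The main obstacle I anticipate is step (2): engineering the auxiliary data so that it simultaneously (a) lowers the orbit-separating level by exactly one in the ill-founded case without over-collapsing (we must not accidentally make $\wf(L) - 1$ levels suffice, nor collapse the well-founded models' ranks), (b) stays within $\Pi^{\infi}_2$ / $\Pi^{\comp}_2$ so Theorem \ref{thm:pi-two} and Theorem \ref{thm:comp}'s downstream uses still go through, and (c) behaves uniformly whether or not $L$ is well-founded, since a single sentence $T$ must handle all $L \in \mc{S}$ at once and cannot "know" which case it is in. I expect this to require a careful choice tied to the internal combinatorics of the models produced in Theorem \ref{thm:construction} (e.g. attaching to each "node" at level $\gamma$ of the model a canonical witness that is already pinned down one back-and-forth level earlier), and the bulk of the real work of the theorem is in checking the back-and-forth calculus for this augmented structure.
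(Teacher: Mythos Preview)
Your diagnosis is exactly right: the only case needing adjustment is ill-founded $L$, and the goal is precisely to make every automorphism orbit $\Sigma^{\infi}_{\wf(L)}$-definable (equivalently, to eliminate $\wf(L)$-free tuples), while not over-collapsing in either case and keeping everything $\Pi^{\infi}_2$ and uniform in $L$. Your step (1) is also correct about where the $+1$ comes from.

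What is missing is the concrete mechanism in step (2), and the paper's device is rather different from ``add new marking predicates.'' The construction in Section~\ref{sec:gen-const} was already set up with the sets $R_n$ and axiom \ref{Q6}, which restricts the $L$-coordinate of $E(x,y)$ for tuples at tree-depth $n$ to lie in $R_n \cup \{-\infty,\varrho(x)\}$. For Theorem~\ref{thm:construction} this was made vacuous by \ref{O4a} ($R_n = L$). For Theorem~\ref{thm:construction2} the paper instead imposes \ref{O4b}, which builds the $R_n$ as a carefully nested ``thin'' hierarchy (explicitly analogous to Knight--Millar's thin trees). The key property, Lemma~\ref{lem:prop-of-Rn}, is that when $L$ is ill-founded each $R_n$ is well-founded and $R_n \cap \wf(L)$ is bounded by some $\alpha_n \in \wf(L)$. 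Via \ref{Q6} this forces every tuple of depth $\leq n$ to have its orbit determined already at level $\alpha_n < \wf(L)$; since $\bigcup_n R_n = L$, the $\alpha_n$ are cofinal in $\wf(L)$ and the Scott rank is exactly $\wf(L)$, not less.

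So the paper does not \emph{add} structure to the model but rather \emph{restricts} the range of the existing $E$ function level-by-level. This is what makes the uniformity and non-over-collapsing work out: in the well-founded case the restriction is harmless (the $R_n$ still exhaust $L$), and in the ill-founded case each finite tuple automatically lives at a bounded depth, hence at a bounded rank. Your ``marking'' idea is not obviously wrong, but you would have to invent something equivalent to this thinning device, and the honest obstacle you flag in (2)(a)--(c) is real: without the $R_n$ mechanism (or something just like it) it is not clear how to get a single $\Pi^{\infi}_2$ sentence that lowers the rank by exactly one in the ill-founded case, does nothing in the well-founded case, and cannot tell the two cases apart.
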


\subsection{Overview of the construction}

Our structures will have two sorts, the \textit{order sort} and the \textit{main sort}. We will also treat elements of $\omega$ as if they are in the structure (e.g., we will talk about functions with codomain $\omega$). We can identify $\mc{S}$ with an $\mc{L}_{\omega_1 \omega}$ sentence $S$ in the language with a symbol $\leq$ for the ordering and possibly further symbols; $\mc{S}$ is the class of reducts of models of $S$ to the language with just the symbol $\leq$. Let $S^+$ be $S$ together with:
\begin{enumerate}[label=(O\arabic*)]
	\item\label{O1} There are constants $(e_i)_{i \in \omega}$ such that each element is equal to exactly one constant.
	\item\label{O2} There is a partial successor function $\alpha \mapsto \alpha+1$, and each non-maximal element has a successor.
	\item\label{O3} There is a sequence $(R_n)_{n \in \omega}$ of subsets satisfying:
	\begin{enumerate}[label=(R\arabic*)]
		\item\label{R1} $R_1$ is not strictly bounded (i.e., there is no $\alpha$ which is strictly greater than each element of $R_1$),
		\item\label{R2} $R_n \subseteq R_{n+1}$,
		\item\label{R3} $\bigcup_n R_n$ is the whole universe of the order sort.
		\item\label{R4} If $\alpha \in R_n$, then $\alpha = \sup(\beta + 1 : \beta \in R_{n+1} \text{ and } \beta < \alpha)$,
		\item\label{R5} For each $n$ and $\beta$, there is a least element $\gamma$ of $R_n$ with $\gamma \geq \beta$.
	\end{enumerate}
\end{enumerate}
For Theorem \ref{thm:construction} (i.e.\ to get $\scs(T) = \{ \wfc(L) : L \models S \}$) we will add
\begin{enumerate}[label=(O4a)]
	\item\label{O4a} $R_n = L$ for all $n$.
\end{enumerate}
\ref{O3} is a consequence of \ref{O4a}; moreover, \ref{O4a} will make the $R_n$ trivial (see \ref{Q6} below). \ref{O4a} will only be used for the final computation of the Scott ranks of the models of $T$, whereas \ref{O3} will be used in the construction itself. For Theorem \ref{thm:construction2}, we will use a different axiom \ref{O4b} instead of \ref{O4a}; \ref{O3} will also be a consequence of \ref{O4b}. The general construction will be the same, but \ref{O4b} will give us a different computation of the Scott rank of the resulting models. Thus \ref{O3} is exactly that common part of \ref{O4a} and \ref{O4b} which is required for the construction, and the particulars of \ref{O4a} and \ref{O4b} are what give the Scott ranks. While reading through the construction for the first time, it might be helpful to assume that \ref{O4a} is in effect. Each order type in $\mc{S}$ is represented as a model of $S^+$.

The order sort will be a model of $S^+$. Our next step will be to define, for each model $L$ of $S^+$, an $\mc{L}_{\omega_1 \omega}$-sentence $T(L)$. The sentence $T$ will say that the order sort is a model $L$ of $S^+$ and the main sort is a model of $T(L)$. In defining $T(L)$, we will use quantifiers over $L$, and $T(L)$ will be uniform in $L$.

For now, fix a particular model $L$ of $S^+$. As a model of $S$, $L$ will be a linear ordering, which we view as a non-standard ordinal. The Scott rank of $\mc{M} \models T(L)$ will be determined by $L$; in particular, if $L$ is actually an ordinal, then the Scott rank of $\mc{M}$ will be its order type. If $(L,\mc{M})$ is a model of $T$, then since by \ref{O1} each element of $L$ is named by a constant, the Scott rank of $L$ will be as low as possible, and so the Scott rank will be carried by $\mc{M}$. We will have
\[\scs(T) = \{ SR(\mc{M}) \colon \mc{M} \models T(L) \text{ for some $L \models S^+$} \}. \]
If $L$ has a least element and at least two elements, then for $\mc{M} \models T(L)$, $SR(\mc{M})$ will be $\wfc(L)$ (or $\wf(L)$ in the case of Theorem \ref{thm:construction2}). We can then modify $T$ slightly using Proposition \ref{prop:change-spec} to get the theorem; we first modify $S$ so that every $L \models S$ has a least element and at least two elements, and then we use Proposition \ref{prop:change-spec} to add $0$ or $1$, if desired, to the Scott spectrum. Since there are structures of Scott rank $0$ and $1$ which have Scott sentences which are $\Pi^{\comp}_1$ and $\Pi^{\comp}_2$ respectively, Proposition \ref{prop:change-spec} gives the correct quantifier complexity.

$T(L)$ will be constructed as follows. First, we will let $\mathbb{K}$ be the class of finite structures satisfying the properties \ref{P1}-\ref{P6} and \ref{Q0}-\ref{Q6} below. We will show that $\mathbb{K}$ has a Fra\"{\i}ss\'e limit. This is an ultrahomogeneous structure, and hence has very low Scott rank. We will add to the Fra\"{\i}ss\'e limit unary relations $A_i$ indexed by $i \in \omega$. $T(L)$ will be a sentence of $\mc{L}_{\omega_1 \omega}$ defining the Fra\"{\i}ss\'e limit of $\mathbb{K}$ together with relations $A_i$ satisfying properties \ref{A2} and \ref{A1}.

To see (1) of Theorem \ref{thm:construction}, we can take the Morleyization of $S^+$. This will be a $\Pi^{\infi}_2$ sentence which defines the same class of linear orders. The construction of $T(L)$ relative to $L$ is $\Pi^{\infi}_2$, so if we define $T$ in the same way as above but replacing $S^+$ by its Morleyization $S^+_{M}$, $T$ will now be $\Pi^{\infi}_2$. Since in each model $(L,\mc{M})$ of $T$, each element of $L$ is named by a constant, we still have
\[\scs(T) = \{ SR(\mc{M}) \colon \mc{M} \models T(L) \text{ for some $L \models S^+_M$} \}. \]
If $S$ is actually a computable formula, then its Morleyization is computable, and this $T$ will be computable.

To see (2), we observe that if $L$ is a computable model of $S$ with a computable successor relation, then it has a computable expansion to a model of \ref{O4a} (and hence of \ref{O3}). Then Lemma \ref{lem:fraisse-comp} below will show that there is a computable model of $T$ with order sort $L$.

\subsection{\texorpdfstring{The definition of $T(L)$}{The definition of T(O)}}

Fix $L \models S^+$. We begin by constructing the age of our Fra\"{\i}ss\'e limit. Let $\mathbb{K}$ be be the class of finite structures $\mc{M}$ satisfying \ref{P1}-\ref{P6} and \ref{Q0}-\ref{Q6} below. Structures in $\mathbb{K}$ should be viewed as trees.

\begin{enumerate}[label=(P\arabic*)]
	\item\label{P1} $\preceq$ a partial tree-ordering, that is, the set of predecessors of any element is linearly ordered.
	\item\label{P2} $\la \ra$ is the unique $\preceq$-smallest element.
	\item\label{P3} Each element other than $\la \ra$ has a unique predecessor, and $P$ is a unary function $\mc{M} \to \mc{M}$ picking out that predecessor.
	\item\label{P4} Each element has finite length, i.e., there is a finite chain of successors starting at $\langle \rangle$ and ending at that element.
	\item\label{P5} $\varrho \colon \mc{M} \setminus \{ \la \ra \} \to L$ and $\varepsilon \colon \mc{M} \setminus \{ \la \ra \} \to \omega$ are unary functions.
	\item\label{P6} If $x \prec y$, then $\varrho(x) > \varrho(y)$.
\end{enumerate}

The properties \ref{P1}-\ref{P6} that we have introduced so far already define the age of a Fra\"{\i}ss\'e limit in the restricted language $\{\la \ra, \preceq,P,\varrho,\varepsilon\}$. In reading the properties \ref{Q0}-\ref{Q6} below, it will helpful to have this model in mind.

\begin{lemma}\label{lem:first-fraisse}
The class of finite structures in the language $\{\la \ra, \preceq, P, \varrho,\varepsilon\}$ satisfying \ref{P1}-\ref{P6} has the hereditary property (HP), the amalgamation property (AP), and the joint embedding property (JEP). The Fra\"{\i}ss\'e limit is (isomorphic to) the following structure $\mc{M}$.

Fix an infinite set $D$. The domain of $\mc{M}$ is the set of all finite sequences
\[ \sigma = \la (\alpha_0,c_0,d_0),\ldots,(\alpha_n,c_n,d_n) \ra \]
with $\alpha_i \in L$, $c_i \in \omega$, and $d_i \in D$, such that $\alpha_0 > \alpha_1 > \cdots > \alpha_n$. We interpret the relations in the natural way: $\preceq$ is the standard ordering of extensions of sequences, $P$ is the standard predecessor function, $\varepsilon(\sigma) = c_n$, and $\varrho(\sigma)=\alpha_n$.
\end{lemma}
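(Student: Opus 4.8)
The plan is to check Fra\"{\i}ss\'e's hypotheses for the class $\mathbb{K}_0$ of finite structures in $\{\langle\rangle,\preceq,P,\varrho,\varepsilon\}$ satisfying \ref{P1}--\ref{P6} and then to recognize $\mathcal{M}$ as the limit. A substructure of a member of $\mathbb{K}_0$ is a subset containing $\langle\rangle$ and closed under $P$; it inherits $\varrho,\varepsilon$, whose codomains $L,\omega$ we treat as fixed external sorts. By \ref{P3}--\ref{P4} the $\preceq$-predecessors of an element $x$ are exactly the finite chain $x \succ P(x) \succ P^2(x) \succ \cdots \succ \langle\rangle$, so every substructure is downward closed for $\preceq$; with this the hereditary property is immediate (restriction preserves \ref{P1}--\ref{P6}), and $\mathbb{K}_0$ has only countably many isomorphism types since $L$ is countable by \ref{O1}.

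For amalgamation, take a common substructure $\mathcal{N}_0$ of $\mathcal{N}_1,\mathcal{N}_2\in\mathbb{K}_0$, intersecting in exactly $\mathcal{N}_0$. The one point to notice is that for $y\in\mathcal{N}_0$ all $\preceq$-predecessors of $y$ already lie in $\mathcal{N}_0$ (the chain $y,P(y),P^2(y),\dots$ stays inside by closure under $P$), so the predecessor set of $y$ is computed the same way in $\mathcal{N}_1$ and in $\mathcal{N}_2$. Let $\mathcal{N}$ have underlying set $\mathcal{N}_1\cup\mathcal{N}_2$ amalgamated over $\mathcal{N}_0$, put $x\preceq y$ iff $x\preceq_{\mathcal{N}_1}y$ or $x\preceq_{\mathcal{N}_2}y$ (so elements of $\mathcal{N}_1\setminus\mathcal{N}_0$ and $\mathcal{N}_2\setminus\mathcal{N}_0$ are incomparable), and let $P,\varrho,\varepsilon$ be the common extensions. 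The only step needing thought is transitivity of $\preceq$: if $x\preceq y\preceq z$ changes sides, the middle term $y$ must lie in $\mathcal{N}_0$, hence so does $x$, and then $x\preceq z$ holds inside one factor. The other axioms are routine: $\langle\rangle$ stays the unique least element, $P(x)$ the unique immediate predecessor, lengths are unchanged, \ref{P5} is clear, and for \ref{P6} any $\preceq$-comparable pair lies in one factor where $\varrho$ already strictly decreases. So $\mathcal{N}\in\mathbb{K}_0$ amalgamates $\mathcal{N}_1,\mathcal{N}_2$ over $\mathcal{N}_0$, and the joint embedding property is the case $\mathcal{N}_0=\{\langle\rangle\}$ (which is in $\mathbb{K}_0$ and embeds into everything, $\langle\rangle$ being a constant). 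I expect this transitivity verification to be the one genuinely delicate point; everything else is bookkeeping.

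By Fra\"{\i}ss\'e's theorem $\mathbb{K}_0$ has a Fra\"{\i}ss\'e limit, and it remains to see that $\mathcal{M}$ is it: that $\mathcal{M}$ is countable, has age $\mathbb{K}_0$, and is ultrahomogeneous. Countability is clear ($L,\omega,D$ are countable). Each finite substructure of $\mathcal{M}$ plainly satisfies \ref{P1}--\ref{P6}, with the strict-decrease condition $\alpha_0>\cdots>\alpha_n$ on sequences giving \ref{P6}. Conversely, given $\mathcal{N}\in\mathbb{K}_0$, fix an injection $\iota\colon\mathcal{N}\setminus\{\langle\rangle\}\to D$ and send $x\in\mathcal{N}$ with predecessor chain $\langle\rangle=x_0\prec x_1\prec\cdots\prec x_m=x$ to
\[ \big\langle\,(\varrho(x_1),\varepsilon(x_1),\iota(x_1)),\ \ldots,\ (\varrho(x_m),\varepsilon(x_m),\iota(x_m))\,\big\rangle ; \]
by \ref{P6} the first coordinates strictly decrease, so this lies in $\mathcal{M}$; injectivity of $\iota$ gives injectivity of the map; and $\preceq,P,\varrho,\varepsilon$ are preserved, as they read off initial segments and final coordinates. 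Hence the age of $\mathcal{M}$ is $\mathbb{K}_0$.

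For ultrahomogeneity it suffices, by the usual back-and-forth over enumerations of $\mathcal{M}$ (first adjoining any missing predecessors, which is itself an instance of the move below, legitimate since $\langle\rangle$ lies in every substructure), to extend an isomorphism $f$ between finite substructures of $\mathcal{M}$ by a single element $x\notin\dom{f}$ with $P(x)\in\dom{f}$. Writing $x=P(x)\concat(\alpha,c,d)$, set $f(x)=f(P(x))\concat(\alpha,c,d')$ with $d'\in D$ chosen off the finitely many $D$-coordinates appearing in the range of $f$. Since $f$ preserves $\varrho$ we get $\alpha<\varrho(f(P(x)))$ (or $P(x)=\langle\rangle$, with no constraint), so $f(x)\in\mathcal{M}$; and freshness of $d'$ makes $f(x)$ new and $\preceq$-incomparable with every old element of the range, so the extension is again an isomorphism of finite substructures. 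Iterating in both directions yields an automorphism of $\mathcal{M}$ extending $f$, so $\mathcal{M}$ is ultrahomogeneous and hence is the Fra\"{\i}ss\'e limit of $\mathbb{K}_0$.
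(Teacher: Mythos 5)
Your proof is correct, and it reaches the conclusion by essentially the same route as the paper, which simply asserts that the age of $\mathcal{M}$ is the class of finite structures satisfying (P1)--(P6) and that $\mathcal{M}$ is ultrahomogeneous, so that HP, AP, and JEP come for free from the converse direction of Fra\"{\i}ss\'e's theorem. The one organizational difference is that you additionally verify AP directly by building the free amalgam $\mathcal{N}_1\cup_{\mathcal{N}_0}\mathcal{N}_2$ (with the transitivity check through the middle term in $\mathcal{N}_0$); this is redundant given your ultrahomogeneity argument but harmless, and it makes explicit the downward-closure fact (substructures are $P$-closed, hence $\preceq$-initial) that both halves of the argument rely on. All the delicate points --- the fresh choice of $d'\in D$ guaranteeing that the new image is $\preceq$-incomparable with the old range, the use of $\varrho$-preservation to keep the extended sequence strictly decreasing, and the reduction of the general extension step to adding a single child of an element already in the domain --- are handled correctly.
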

\begin{proof}
First, it is easy to see that the age of $\mc{M}$ is the set of finitely generated structures satisfying \ref{P1}-\ref{P6}. Then we just have to note that $\mc{M}$ is ultrahomogeneous to see that it is the Fra\"{\i}ss\'e limit of these structures.
\end{proof}

\noindent Given an element
\[ \sigma = \la (\alpha_0,c_0,d_0),\ldots,(\alpha_n,c_n,d_n) \ra \]
of this structure, write $\bar{\varrho}(\sigma)$ for $\la \alpha_0,\ldots,\alpha_n \ra$ and $\bar{\varepsilon}(x)$ for $\la c_0,\ldots,c_n \ra$. Write $|\sigma| = n+1$ for the length of $\sigma$.

We will now add an additional function $E$ whose properties are axiomatized by \ref{Q0}-\ref{Q6}. $E$ is a function from $\mc{M} \setminus \{ \la \ra\} \times \mc{M} \setminus \{ \la \ra \}$ to $\{-\infty\} \cup L \times \omega$. $E$ is defined only on those pairs $(x, y)$ with $|x| = |y|$, $\bar{\varrho}(x) = \bar{\varrho}(y)$, and $\bar{\varepsilon}(x) = \bar{\varepsilon}(y)$. Note that the domain of $E$ is an equivalence relation, for which we write $\between$. For convenience, when we talk about $E(x,y)$ for some $x$ and $y$ we will often implicitly assume that $x \between y$. We view the range of $E$ as a totally ordered set via the lexicographic ordering on $L \times \omega$, with $-\infty$ smaller than every element of $L \times \omega$. Given $x,y \in \mc{M}$ with $E(x,y) > -\infty$, let $E_{L}(x,y)$ be the first coordinate of $E(x,y)$, i.e., the coordinate in $L$, and let $E_{\omega}(x,y)$ be the second coordinate. If $E(x,y) = -\infty$, then we let $E_{L}(x,y) = -\infty$.

One can view $E$ as a nested sequence $(\sim_{\alpha,n})_{\alpha \in L, n \in \omega}$ of relations on $M$, defined by $x \sim_{\alpha,n} y$ if $E(x,y) \geq \min((\varrho(x),0),(\alpha,n))$. If $E(x,y) = -\infty$, then $x$ and $y$ are not at all related. It will follow from \ref{Q0}, \ref{Q1}, and \ref{Q2} that these are equivalence relations. These equivalence relations are nested and continuous (i.e., $\sim_{\alpha,0} = \bigcap_{\beta < \alpha,n\in\omega} \sim_{\beta,n}$). The most important relations are the relations $\sim_{\alpha,0}$ which we will denote by $\sim_\alpha$. The relations $\sim_\alpha$ will be non-standard back-and-forth relations (see Lemma \ref{lem:are-ns-bf}). The definition of the back-and-forth relations is not $\Pi^{\infi}_2$, so we cannot just ask that $\sim_\alpha$ satisfy the definition of the back-and-forth relations. This is where we use $\varepsilon$ and the $\omega$ in $L \times \omega$; their role is to convert an existential quantifier into a universal quantifier by acting as a sort of Skolem function.

If $x \in M$ is not a dead end, the children of $x$ are divided into infinitely many subsets indexed by $\omega$ via the function $\varepsilon$. If $E_{L}(x,y) > \alpha$, then for every child $x'$ of $x$, there will be a child $y'$ of $y$ with $E_{L}(x',y') \geq \alpha$; this is in keeping with the idea of making the equivalence relations $\sim$ agree with the back-and-forth relations. If $E_{L}(x,y) = \alpha$, then this will not be true for all $x'$. However, it will be true for exactly those $x'$ with $\varepsilon(x') < E_{\omega}(x,y)$. Rather than saying that there is a child $x'$ of $x$ such that no child $y'$ of $y$ has $E_{L}(x',y') = \alpha$, we can say that for all children $x'$ of $x$ with $\varepsilon(x) \geq E_{\omega}$, there is no child $y'$ of $y$ with $E_{L}(x',y') = \alpha$. This is of lower quantifier complexity. (Note that we cannot say that for \textit{all} $x'$ and $y'$ children of $x$ and $y$, $E(x',y') < \alpha$. This is for the same reason as the following fact: if $x$ and $y$ are such that for all $\bar{x}'$ and $\bar{y}'$, $x \bar{x}' \nequiv_{\alpha} y \bar{y}'$, then $x \nequiv_\alpha y$.)

For all $x$, $y$, and $z$ with $x \between y \between z$:
\begin{enumerate}[label=(Q\arabic*)]
	\item\label{Q0} $E(x,x) = (\varrho(x),0)$,
	\item\label{Q1} $E(x,y) = E(y,x)$,
	\item\label{Q2} $E(x,z) \geq \min(E(x,y),E(y,z))$,
	\item\label{Q3} $E(x,y) \leq (\varrho(x),0) = (\varrho(y),0)$.
	\item\label{Q5} If $x'$ and $y'$ are successors of $x$ and $y$ with $x' \between y'$, $E_{L}(x',y') \leq E_{L}(x,y)$.
	\item\label{Q4} If $E(x,y) > -\infty$, then for every $x'$ a successor of $x$ with $\varepsilon(x') \geq E_\omega(x,y)$, there are no successors $y'$ of $y$ with $E_{L}(x',y') = E_{L}(x,y)$.
	\item\label{Q6} If $|x| = |y| = n$, then $E_{L}(x,y) \in R_n \cup \{-\infty\} \cup \{ \varrho(x) \}$.
\end{enumerate}

While $\la \ra$ was not in the domain of $E$, we will consider $E(\la \ra, \la \ra)$ to be $(L,0)$, i.e., to be greater than each element of $L$. 

\ref{Q0}, \ref{Q1}, and \ref{Q2} are just saying that the relations $\sim_{\alpha,n}$ defined above are reflexive, symmetric, and transitive respectively (and hence equivalence relations). \ref{Q4} is the axiom which is doing most of the work.

The intuition behind \ref{Q6} will be explained in Subsection \ref{subsec:omegaCK}. For now, the reader can simply imagine that $R_n = L$ for each $n$ (as it will be for Theorem \ref{thm:construction}), so that \ref{Q6} is a vacuous condition.

\begin{lemma}\label{lem:second-fraisse}
The class $\mathbb{K}$ of finite structures satisfying \ref{P1}-\ref{P6} and \ref{Q0}-\ref{Q6} relative to the fixed structure $L$ has the AP, JEP, and HP.
\end{lemma}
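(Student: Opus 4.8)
The plan is to derive HP, JEP and AP for $\mathbb{K}$ from the corresponding facts for the underlying trees (Lemma~\ref{lem:first-fraisse}), handling the extra function $E$ by always extending it in the ``freest'' way permitted by the axioms. For hereditariness, note that a subset of a structure in $\mathbb{K}$ closed under $P$ is downward closed in the tree and so satisfies \ref{P1}--\ref{P6}, while each of \ref{Q0}--\ref{Q6} is a universally quantified statement about $E$ --- in particular \ref{Q4} has the shape $\forall x\forall y\forall x'\forall y'\,\neg(\cdots)$ --- so all of them pass to substructures. Since $\{\la\ra\}\in\mathbb{K}$, it suffices to prove AP: JEP is then the special case of amalgamating over $\{\la\ra\}$.

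For AP, let $\mc{C}\hookrightarrow\mc{A}$ and $\mc{C}\hookrightarrow\mc{B}$; after renaming assume the maps are inclusions and $\mc{A}\cap\mc{B}=\mc{C}$. Take $\mc{D}$ to be the free tree amalgam: $\mc{D}=\mc{A}\cup\mc{B}$ as a set, the predecessor chain of a point of $\mc{A}$ (resp. $\mc{B}$) is its chain in $\mc{A}$ (resp. $\mc{B}$), no point of $\mc{A}\setminus\mc{C}$ is $\preceq$-comparable with a point of $\mc{B}\setminus\mc{C}$, and $\varrho,\varepsilon$ are inherited; that this satisfies \ref{P1}--\ref{P6} and contains $\mc{A},\mc{B}$ as substructures over $\mc{C}$ is routine and underlies the AP for the restricted language in Lemma~\ref{lem:first-fraisse}. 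We extend $E$ by keeping the given values on pairs lying inside $\mc{A}$ or inside $\mc{B}$ (these agree on $\mc{C}$-pairs since embeddings preserve $E$), and on a ``cross pair'' $a\between b$ with $a\in\mc{A}\setminus\mc{C}$, $b\in\mc{B}\setminus\mc{C}$ we put
\[
 E(a,b)=E(b,a)=\max\bigl\{\,\min\bigl(E(a,c),E(c,b)\bigr):c\in\mc{C},\ c\between a,\ c\between b\,\bigr\},
\]
with $\max\emptyset=-\infty$; this maximum exists since $\mc{C}$ is finite, and it is the largest value compatible with transitivity through $\mc{C}$.

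It then remains to verify \ref{Q0}--\ref{Q6} for $\mc{D}$. Axioms \ref{Q0}, \ref{Q1}, \ref{Q3} are immediate, and \ref{Q6} holds because $R_{|a|}\cup\{-\infty\}\cup\{\varrho(a)\}$ is closed under the finite minima and maxima appearing in the definition (on a cross pair $\varrho(a)=\varrho(c)=\varrho(b)$). For \ref{Q2}, any triple $x\between y\between z$ in $\mc{D}$ has two entries on the same side; unfolding $E$ on the cross pairs and using transitivity inside each of $\mc{A},\mc{B}$ together with the fact that a $\mc{C}$-witness for $(x,y)$ and one for $(y,z)$ compose to a $\mc{C}$-witness for $(x,z)$ gives the inequality. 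The heart of the matter is \ref{Q5} and \ref{Q4}, and both rest on one observation: if $c'$ realizes the maximum defining $E(x',y')$ for successors $x',y'$ of a cross pair $(x,y)$, then $d:=P(c')$ is a $\mc{C}$-witness for $(x,y)$ (as $\between$ is preserved by $P$), and \ref{Q5} inside $\mc{A}$ and inside $\mc{B}$ yields $E_{L}(x',c')\le E_{L}(x,d)$ and $E_{L}(c',y')\le E_{L}(d,y)$. For \ref{Q5} this is already $E_{L}(x',y')\le E_{L}(x,y)$. For \ref{Q4}, assume $E(x,y)>-\infty$, $x'$ a successor of $x$ with $\varepsilon(x')\ge E_{\omega}(x,y)$, and --- for contradiction --- $y'$ a successor of $y$ with $E_{L}(x',y')=E_{L}(x,y)=:\lambda$. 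Combining the last inequalities with the definition of $E$ and with $E(x,y)\ge\min(E(x,d),E(d,y))$ squeezes $\min(E_{L}(x,d),E_{L}(d,y))$ to equal $\lambda$, pins $E_{L}(x',c')=\lambda$ (or $E_{L}(c',y')=\lambda$) on the witnessing successor $c'$ of $d$, and forces $\varepsilon(x')\ge E_{\omega}(x,d)$ (or $\varepsilon(y')=\varepsilon(x')\ge E_{\omega}(d,y)$); this contradicts \ref{Q4} inside $\mc{A}$ (resp. $\mc{B}$) for the pair $(x,d)$ (resp. $(d,y)$). The mixed cases in which one of $x,y$ already lies in $\mc{C}$ are handled in the same way, the only new feature being that successors taken in $\mc{D}$ may leave the original component. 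This proves AP, hence JEP.

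The step I expect to be the real obstacle is \ref{Q4}. It is a negative condition, and because the cross-pair values are defined by a maximum, an amalgam could a priori create a spurious coincidence $E_{L}(x',y')=E_{L}(x,y)$ present in neither factor. The fix is exactly the combination above: defining $E$ on cross pairs as the tightest transitivity bound, so that, together with the monotonicity supplied by \ref{Q5}, the value $E_{L}(x',c')$ at a witnessing successor is pinned to $\lambda$, which \ref{Q4} in the factor explicitly forbids. Everything here is uniform in the fixed linear order $L$, and the axioms \ref{O3} for the $R_n$ enter only through \ref{Q6}.
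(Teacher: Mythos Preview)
Your approach is correct and genuinely different from the paper's. The paper reduces amalgamation to the special case where one factor adds a single new element $c$ which is a leaf; then the cross-definition of $E$ only involves pairs $(b,c)$, and the verification of \ref{Q4} and \ref{Q5} only has to be done at the parent of $c$. This buys simplicity for \ref{Q4}/\ref{Q5} at the price of a long asymmetric case analysis for \ref{Q2} (the paper checks roughly a dozen subcases). Your symmetric full-amalgam approach reverses the trade: \ref{Q2} becomes the short composition-of-witnesses argument you sketch, while \ref{Q4} carries the weight.

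Your \ref{Q4} sketch is correct but compressed at the key step. After producing the witness $c'$ with parent $d$ and deducing $E_L(x,d)\ge\lambda$, $E_L(d,y)\ge\lambda$, and $\min(E_L(x,d),E_L(d,y))=\lambda$, the actual case split is by whether $E(x,d)\le E(d,y)$ or $E(x,d)>E(d,y)$; this is what decides both which of $E_L(x,d),E_L(d,y)$ equals $\lambda$ \emph{and} which of the inequalities $E_\omega(x,y)\ge E_\omega(x,d)$ or $E_\omega(x,y)\ge E_\omega(d,y)$ you get, hence in which factor you invoke \ref{Q4}. Your parenthetical ``(or \dots)'' hides this dependence. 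The mixed cases where one of $x,y$ lies in $\mc{C}$ do go through as you say---the point being that $x,d,y$ then all lie in a single factor so $E(x,y)\ge\min(E(x,d),E(d,y))$ comes from \ref{Q2} there, while $(x',c')$ and $(c',y')$ still live in the respective factors so \ref{Q5} and \ref{Q4} apply. It would strengthen the write-up to make the $E(x,d)\lessgtr E(d,y)$ split explicit and to note that $c'\in\mc C$ lies in both factors, which is what lets you apply \ref{Q4} on either side.
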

\begin{proof}
It is easy to see that $\mathbb{K}$ has the hereditary property. Note that every finite structure in $\mathbb{K}$ contains, via an embedding, the structure with one element $\langle \rangle$. So the joint embedding property will follow from the amalgamation property.

For the amalgamation property, let $\mc{A}$ be a structure in $\mathbb{K}$ which embeds into $\mc{B}$ and $\mc{C}$. Identify $\mc{A}$ with its images in $\mc{B}$ and $\mc{C}$, and assume that the only elements common to both $\mc{B}$ and $\mc{C}$ are the elements of $\mc{A}$. By amalgamating $\mc{C}$ one element at a time, we may assume that $\mc{C}$ contains only a single element $c$ not in $\mc{A}$. The element $c$ is the child of some element of $\mc{A}$, and has no children in $\mc{C}$. We will define a structure $\mc{D}$ whose domain is $\mc{B} \cup \{c\}$ and then show that $\mc{D}$ is in $\mathbb{K}$.

First, we can take the amalgamation of the structures in the language $\{\langle \rangle,\preceq,P,\varrho,\varepsilon\}$ as in Lemma \ref{lem:first-fraisse}; we just add $c$ to $\mc{B}$, setting $P(c)$, $\varepsilon(c)$, and $\varrho(c)$ to be the same as in $\mc{C}$. Set $E(c,c) = (\varrho(c),0)$. We need to define $E(b,c)$ when $b$ is an element from $\mc{B}$ with $b \between c$. Define $E(b,c) = E(c,b)$ to be the maximum of $\min(E(b,a),E(a,c))$ over all $a \in \mc{A}$ with $a \between c$. If there are no such $a \in \mc{A}$, set $E(b,c) = -\infty$. By \ref{Q2} this is well-defined, that is, for $a \in \mc{A}$, $E(a,c)$ is the maximum of $\min(E(a,a'),E(a',c))$ over all $a' \in \mc{A}$.

We now have to check \ref{Q0}-\ref{Q6}. \ref{Q0} and \ref{Q1} are obvious from the definition of the extension of $E$.

For \ref{Q2}, we have two new cases to check. For the first case, fix $b, b' \in \mc{B}$ with $b \between b' \between c$; we will show that $E(b,c) \geq \min(E(b,b'),E(b',c))$. If there is no $a \in \mc{A}$ with $a \between c$, then $E(b,c) = E(b',c) = -\infty$, and so we have $E(b,c) \geq \min(E(b,b'),E(b',c))$. Otherwise, let $a \in \mc{A}$ be such that $E(b',c) = \min(E(b',a),E(a,c))$. By definition,
\begin{align*}
E(b,c) &\geq \min(E(b,a),E(a,c)) \\
&\geq \min(E(b,b'),E(b',a),E(a,c)) \\
&= \min(E(b,b'),E(b',c)).
\end{align*}

Now for the second case, again fix $b, b' \in \mc{B}$ with $b \between b' \between c$; now we will show that $E(b,b') \geq \min(E(b,c),E(c,b'))$. If there is no $a \in \mc{A}$ with $a \between c$, then $E(b,c) = E(b',c) = -\infty$, and so we have
\[ E(b,b') \geq -\infty = \min(E(b,c),E(c,b')).\]
Otherwise, let $a \in \mc{A}$ be such that $E(b,c) = \min(E(b,a),E(a,c))$, and let $a' \in \mc{A}$ be such that $E(b',c) = \min(E(b',a'),E(a',c))$. Then
\begin{align*}
E(b,b') &\geq \min(E(b,a),E(a,a'),E(a',b')) \\
& \geq \min(E(b,a),E(a,c),E(c,a'),E(a',b')) \\
&= \min(E(b,c),E(c,b')).
\end{align*}

For \ref{Q3}, suppose that $b \in \mc{B}$ has $b \between c$. Then either $E(b,c) = -\infty$, in which case there is nothing to check, or $E(b,c) = \min(E(b,a),E(a,c))$ for some $a \in \mc{A}$. In the second case, either $E(b,c) = E(b,a) \leq (\varrho(a),0) = (\varrho(c),0)$ or $E(b,c) = E(a,c) \leq (\varrho(c),0)$.

Now we check \ref{Q5}. Let $\hat{c} \in \mc{A}$ be the parent of $c$. Fix $b \in \mc{B}$ and let $\hat{b}$ be the parent of $b$. We must show that $E_{L}(b,c) \leq E_{L}(\hat{b},\hat{c})$. Choose $a \in \mc{A}$ such that $E(b,c) = \min(E(b,a),E(a,c))$. (If there is no such $a$, then we can immediately see that \ref{Q5} holds.) Let $\hat{a}$ be the parent of $a$. Then we have $E_{L}(\hat{b},\hat{a}) \geq E_{L}(b,a)$ and $E_{L}(\hat{a},\hat{c}) \geq E_{L}(a,c)$ so that
\begin{align*}
E_{L}(\hat{b},\hat{c}) &\geq \min(E_{L}(\hat{b},\hat{a}),E_{L}(\hat{a},\hat{c})) \\
& \geq \min(E(b,a),E(a,c)) \\
& = E_{L}(b,c).
\end{align*}

Next we check \ref{Q4}. Since $c$ has no children in $\mc{C}$, the only new case to check is as follows. Let $\hat{c} \in \mc{A}$ be the parent of $c$, and let $\hat{b} \in \mc{B}$ be such that $\hat{c} \between \hat{b}$. Suppose that $n = \varepsilon(c) \geq E_\omega(\hat{c},\hat{b})$ and let $\alpha = E_{L}(\hat{c},\hat{b}) > -\infty$. Suppose to the contrary that there is $b$ a child of $\hat{b}$ with $E_{L}(b,c) = \alpha$. Then, by definition there is $a \in \mc{A}$ such that $E(b,c) = \min(E(b,a),E(a,c))$. Let $\hat{a}$ be the parent of $a$. Since $E_{L}(b,a) \geq \alpha$ and $E_{L}(a,c) \geq \alpha$, $E(\hat{b},\hat{a}) > (\alpha,n)$ and $E(\hat{a},\hat{c}) > (\alpha,n)$. Hence $E(\hat{b},\hat{c}) > (\alpha,n)$. This is a contradiction.

Finally, for \ref{Q6}, if $E(b,c) = -\infty$ we are done. So we may suppose that $E(b,c) = \min(E(b,a),E(a,c))$ for some $a \in \mc{A}$. Then since $E_{L}(b,c)$ and $E_{L}(a,c)$ are both in $R_n \cup \{-\infty\} \cup \{\varrho(c)\}$, the same is true of $E_{L}(b,c)$.
\end{proof}

\begin{lemma}\label{lem:third-fraisse}
The reduct of the Fra\"{\i}ss\'e limit of $\mathbb{K}$ to the language $\{\la \ra,{\preceq,} P,\varrho,\varepsilon\}$ is the structure from Lemma \ref{lem:first-fraisse}.
\end{lemma}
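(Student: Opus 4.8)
The plan is to identify $\mc{F}^-$, the reduct of the Fra\"{\i}ss\'e limit $\mc{F}$ of $\mathbb{K}$ to $\{\la\ra,\preceq,P,\varrho,\varepsilon\}$, by its universal property. Write $\mathbb{K}_0$ for the Fra\"{\i}ss\'e class of Lemma \ref{lem:first-fraisse}, i.e.\ the finite structures in this smaller language satisfying \ref{P1}--\ref{P6}, whose limit is the concrete structure $\mc{M}$. Since $\mc{F}^-$ is countable, it suffices to check that $\mathrm{age}(\mc{F}^-)=\mathbb{K}_0$ and that $\mc{F}^-$ has the extension property (weak homogeneity) with respect to $\mathbb{K}_0$; uniqueness of Fra\"{\i}ss\'e limits together with Lemma \ref{lem:first-fraisse} then gives $\mc{F}^-\cong\mc{M}$. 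Note that ultrahomogeneity of $\mc{F}$ does not by itself give ultrahomogeneity of the reduct $\mc{F}^-$, since a partial automorphism of $\mc{F}^-$ need not preserve $E$, so it is the extension property that we must verify directly.

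The device that makes both points work is the \emph{trivial expansion}: given $N\in\mathbb{K}_0$, define $E$ on $N$ by $E(x,x)=(\varrho(x),0)$ and $E(x,y)=-\infty$ whenever $x\between y$ and $x\neq y$, obtaining a structure $N^+$. One checks $N^+\in\mathbb{K}$: clauses \ref{Q0}, \ref{Q1} are immediate; \ref{Q2}, \ref{Q3}, \ref{Q6} hold because $-\infty$ is the least element of the range of $E$ (and $E_L$ is $-\infty$ off the diagonal); and \ref{Q5}, \ref{Q4} hold because the only pairs with $E>-\infty$ are the diagonal pairs, while $\varrho(x')<\varrho(x)$ for every successor $x'$ of $x$ by \ref{P6}, so for a diagonal pair $(x,x)$ no successor $y'$ of $x$ has $E_L(x',y')=\varrho(x)$. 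This already yields $\mathrm{age}(\mc{F}^-)=\mathbb{K}_0$: the inclusion $\subseteq$ is clear since \ref{P1}--\ref{P6} mention only the smaller language, and conversely any $N\in\mathbb{K}_0$ embeds into $\mc{F}^-$ because $N^+\in\mathbb{K}=\mathrm{age}(\mc{F})$ embeds into $\mc{F}$.

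For the extension property, fix $A\subseteq B$ in $\mathbb{K}_0$ and an embedding $g\colon A\to\mc{F}^-$; inducting on $|B\setminus A|$ we may assume $B=A\cup\{b\}$, and since $A$ is closed under $P$ it is downward closed in the tree order, so $b$ is a leaf of $B$ attached to a point of $A$. Let $A^+$ be the $\mathbb{K}$-structure obtained by transporting to $A$, via $g$, the $E$-function that $\mc{F}$ induces on the finite set $g(A)$; then the reduct of $A^+$ is $A$ and $g\colon A^+\to\mc{F}$ is a $\mathbb{K}$-embedding. Extend $A^+$ to $B^+$ by giving $b$ the trivial $E$-values (namely $E(b,b)=(\varrho(b),0)$ and $E(b,x)=-\infty$ for $x\between b$, $x\neq b$); the check that $B^+\in\mathbb{K}$ is the same routine verification as above — indeed it is a degenerate instance of the amalgamation argument in the proof of Lemma \ref{lem:second-fraisse}, in which the added point contributes only $-\infty$'s and is a new leaf. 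Because $\mc{F}$ is the Fra\"{\i}ss\'e limit of $\mathbb{K}$, the embedding $g\colon A^+\to\mc{F}$ extends to an embedding $\widehat g\colon B^+\to\mc{F}$, and its reduct extends $g$ to an embedding $B\to\mc{F}^-$, as required. The one place needing care is the \ref{Q0}--\ref{Q6} bookkeeping for the trivial expansions, and I expect this to be the main (still routine) obstacle; but because every new $E$-value is the minimum $-\infty$ and the new point is always a leaf, the potentially delicate clauses \ref{Q4} and \ref{Q5} reduce to \ref{P6}, so no genuine difficulty arises.
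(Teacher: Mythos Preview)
Your proof is correct and uses the same device as the paper: the trivial expansion that sets $E(x,x)=(\varrho(x),0)$ and $E(x,y)=-\infty$ for $x\neq y$. The paper compresses everything into the single claim that any $\mc{N}\in\mathbb{K}_0$ extending the reduct of some $\mc{M}\in\mathbb{K}$ can be expanded to an element of $\mathbb{K}$ containing $\mc{M}$, and leaves the deduction of the Fra\"{\i}ss\'e-limit identification implicit; you spell this out by separately checking $\mathrm{age}(\mc{F}^-)=\mathbb{K}_0$ and the extension property, and you reduce to one-point leaf extensions by induction. The content is the same. One small remark: when you say ``the check that $B^+\in\mathbb{K}$ is the same routine verification as above,'' note that $B^+$ now carries nontrivial $E$-values on $A$, so the verification of \ref{Q4} and \ref{Q5} must also treat the mixed cases where one successor is the new leaf $b$ and the other lies in $A$; these are still immediate (the relevant $E_L$ is $-\infty$), but strictly speaking they go beyond the purely trivial expansion $N^+$.
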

\begin{proof}
We just need to show that if $\mc{M}$ is a structure in $\mathbb{K}$, and $\mc{N}$ is a structure in the language $\mc{L}_{-} = \{\la \ra,\preceq,P,\varrho,\varepsilon\}$ satisfying \ref{P1}-\ref{P6} and with $\mc{M} \subseteq_{\mc{L}_{-}} \mc{N}$, then we can expand $\mc{N}$ to a structure $\mc{N}'$ in the language $\mc{L} = \mc{L}_{-} \cup \{E\}$ with $\mc{M} \subseteq_{\mc{L}} \mc{N}'$. We can do this simply by setting $E(x,x) = (\varrho(x),0)$ for $x \in \mc{N} \setminus \mc{M}$, and $E(x,y) = E(y,x) = -\infty$ for all $x \in \mc{N} \setminus \mc{M}$ and $y \in \mc{N}$. \ref{Q0}-\ref{Q6} are easy to check.
\end{proof}

For a fixed $L$, let $T(L)$ be the $\mc{L}_{\omega_1 \omega}$-sentence describing the Fra\"{\i}ss\'e limit of $\mathbb{K}$, and to which we add unary relations $(A_i)_{i \in \omega}$ satisfying \ref{A2} and \ref{A1} below. The relations $A_i$ will name the equivalence classes $\sim_0$, so that while the Fra\"{\i}ss\'e limit is ultra-homogeneous, the models of $T(L)$ will not be. The Fra\"{\i}ss\'e limit is axiomatizable by a $\Pi^{\infi}_2$ formula. If $L$ is computable with a computable successor relation, then $\mathbb{K}$ is a computable age, and hence the Fra\"{\i}ss\'e limit is axiomatizable by a $\Pi^{\comp}_2$ formula. Since \ref{P1}-\ref{P6} and \ref{Q0}-\ref{Q6} are all $\Pi^{\infi}_2$, they hold in the Fra\"{\i}ss\'e limit. Since \ref{A2} and \ref{A1} are also $\Pi^{\infi}_2$ formulas, $T(L)$ is $\Pi^{\infi}_2$ axiomatizable.

\begin{enumerate}[label=(A\arabic*)]
\item\label{A2} For each $x$, $A_i(x)$ for exactly one $i$.

\item\label{A1} For all $x$ and $x'$, $E(x,x') > -\infty$ if and only if for all $i$, $A_{i}(x)\Leftrightarrow A_{i}(x')$.
\end{enumerate}

\begin{lemma}\label{lem:fraisse-comp}
If $L$ is computable with a computable successor relation, then $T(L)$ has a computable model.
\end{lemma}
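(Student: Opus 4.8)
The plan is to build a computable copy of the Fra\"{\i}ss\'e limit of $\mathbb{K}$ together with the relations $A_i$ by a standard effective back-and-forth construction, using the fact that the age $\mathbb{K}$ is a computable age when $L$ is computable with computable successor. First I would observe that, since $L$ and its successor relation are computable, the relations $\preceq, P, \varrho, \varepsilon$ and the function $E$ on finite structures are all decidable, so membership in $\mathbb{K}$ is decidable and the amalgamation procedure from Lemma \ref{lem:second-fraisse} is effective (it only ever computes minima of $E$-values over finitely many elements of the amalgamation base and reads off $\varrho, \varepsilon$ of the new point). This gives a computable presentation of the Fra\"{\i}ss\'e limit $\mc{F}$ of $\mathbb{K}$ in the usual way: enumerate the domain as $\omega$, and at stage $s$ decide the finitely many atomic facts about the new element using a computable bookkeeping of amalgamation requests to guarantee ultrahomogeneity (every finite partial isomorphism between finite substructures extends) and richness (every one-point extension in $\mathbb{K}$ of a finite substructure is realized).

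Next I would add the relations $A_i$. By \ref{A1}, the relation $E(x,x') > -\infty$ is exactly the equivalence relation of having the same $A_i$-labels, and by \ref{Q0}–\ref{Q2} this is a genuine equivalence relation on the computable structure $\mc{F}$; it is moreover a computable equivalence relation since $E$ is computable on $\mc{F}$. So I would enumerate its classes effectively: whenever a new element $x$ appears at stage $s$, check (computably) whether $E(x,y) > -\infty$ for some earlier $y$ already assigned a label $i$; if so put $x$ into $A_i$, and if not assign $x$ the least label not yet used. Because $\mc{F}$ is the Fra\"{\i}ss\'e limit, for every $i$ there will be infinitely many (indeed, a nonempty, and by homogeneity infinite) class, so the assignment realizes \ref{A2}, and by construction it realizes \ref{A1}. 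The resulting expanded structure is then a computable model of $T(L)$.

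The one point requiring a little care is that in the Fra\"{\i}ss\'e construction we must confirm that $\mathbb{K}$, although defined relative to the infinite parameter structure $L$, still behaves like an honest computable class: the structures in $\mathbb{K}$ are finite, and all the defining conditions \ref{P1}–\ref{P6}, \ref{Q0}–\ref{Q6} are decidable relative to a computable diagram of $L$ (here \ref{Q6} uses that $R_n = L$ under \ref{O4a}, so it is vacuous, or more generally that membership in $R_n$ is decidable). Thus one has a uniformly computable list of (isomorphism types of) structures in $\mathbb{K}$ and a computable amalgamation operator, which is exactly the hypothesis needed for the effective Fra\"{\i}ss\'e theorem (see \cite[Section 6]{AshKnight00}). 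The main obstacle, such as it is, is purely bookkeeping: organizing the priority-free requirement list so that all amalgamation/extension requests are eventually met while keeping the construction $\Delta^0_1$; there is no genuine difficulty beyond the standard effective Fra\"{\i}ss\'e argument, since all the relevant combinatorial operations were already shown to be effective.
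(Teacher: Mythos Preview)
Your proposal is correct and follows essentially the same approach as the paper: the paper's proof simply invokes an effective Fra\"{\i}ss\'e theorem (specifically Theorem~3.9 of Csima--Harizanov--Miller--Montalb\'an) to obtain a computable copy of the Fra\"{\i}ss\'e limit of $\mathbb{K}$, and then remarks that the relations $A_i$ can be added computably. You have expanded both steps with the natural details---verifying that $\mathbb{K}$ is a decidable age (with effective amalgamation) and describing explicitly how to label the computable $E(\cdot,\cdot)>-\infty$ equivalence classes---so your argument is a fleshed-out version of the same proof rather than a different one.
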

\begin{proof}
By Theorem 3.9 of \cite{CsimaHarizanovMillerMontalban11}, there is a computable Fra\"{\i}ss\'e limit of $\mathbb{K}$. Then we can add the relations $A_i$ in a computable way.
\end{proof}

\subsection{Computation of the Scott rank}

Fix $L$ a model of $S^+$. Let $\mc{M}$ be a countable model of $T(L)$. The remainder of the proof is a computation of $SR(\mc{M})$. As remarked earlier, for this section we will assume that $L$ has a least element $0$ and has at least two elements. We will show that $SR(\mc{M}) = \wfc(L)$ for such an $\mc{M}$. Let $\wf(L)$ be the well-ordered part of $L$. Recall that we identify elements of $\wf(L)$ with ordinals in the natural way.

\begin{lemma}\label{lem:extension}
Fix $\beta \in \wf(O)$. Suppose that $m \in \omega$ and $u_1,\ldots,u_t$, $u_1',\ldots,u_t'$, and $v$ are tuples from $\mc{M}$ such that $\varepsilon(x) < m$ where $x$ ranges among all of these elements and their predecessors, and such that:
\begin{enumerate}
	\item[(i)] $u_1,\ldots,u_t \equiv_{\at} u_1',\ldots,u_t'$,
	\item[(ii)] for each $i$, $E(u_i,u_i') \geq \min((\varrho(u_i),0),(\beta,m))$.
\end{enumerate}
Suppose moreover that $u_1,\ldots,u_t$ and $u_1',\ldots,u_t'$ are closed under the predecessor relation $P$.
Then there is $v'$ such that $u_1,\ldots,u_t,v$ and $u_1',\ldots,u_t',v'$ satisfy (i) and (ii).
\end{lemma}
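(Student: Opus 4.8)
The plan is to extend the tuple by adding $v$, and build $v'$ by induction along the chain of predecessors of $v$ (using property \ref{P4} that every element has finite length), so that the real work is the "one-step" case: given that the parent $w := P(v)$ already has a partner $w'$ among $u_1',\dots,u_t'$ (or has been produced at an earlier inductive step) satisfying (i) and (ii) with the parameter $(\beta, m)$ possibly replaced by a slightly larger value $(\gamma, m)$ where $\gamma \geq \beta$, we must produce a child $v'$ of $w'$ with $\bar\varrho(v') = \bar\varrho(v)$, $\bar\varepsilon(v') = \bar\varepsilon(v)$, $v \between v'$, and $E(v,v') \geq \min((\varrho(v),0),(\beta,m))$. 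First I would reduce to this one-step case: if $v$ is already in the given tuple we are done; otherwise let $w = P(v)$, and if $w$ is not yet accounted for, recurse on $w$ (its length is strictly smaller), noting that closure under $P$ of the original tuples is what lets the induction bottom out at $\langle\rangle$, whose partner is itself. So assume $w$ has partner $w'$ with $E(w,w') \geq \min((\varrho(w),0),(\beta',m))$ for the appropriate $\beta' \geq \beta$ (at the top of the chain $\beta' = \varrho(w)$ in the sense that $w, w'$ are in the same $\sim_{\varrho(w)}$ class, i.e. $E(w,w') = (\varrho(w), 0)$, which holds once $w$ is high enough; in general the bookkeeping tracks how the parameter grows as we descend).

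The key step is then to invoke the Fra\"{\i}ss\'e-limit's extension property: the finite substructure generated by $w', u_1',\dots,u_t'$ together with a single new child of $w'$ carrying $\varrho$-value $\varrho(v)$, $\varepsilon$-value $\varepsilon(v)$, and appropriate $E$-values is an element of $\mathbb{K}$ (this is exactly what Lemma \ref{lem:second-fraisse} guarantees via amalgamation), so it embeds into $\mc{M}$ over the already-embedded finite part, and its image gives $v'$. What must be checked is that the desired $E$-values are consistent with \ref{Q0}-\ref{Q6}. The target is $E(v, v') = \min((\varrho(v),0),(\beta,m))$ and, more importantly, the values $E(v', u_i')$ for the other elements $u_i'$ with $v' \between u_i'$: these are forced (up to the $\between$-equivalence-class structure) by transitivity \ref{Q2} from $E(w', u_j')$ and the atomic data, and we must verify that no instance of \ref{Q4} or \ref{Q5} is violated. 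Property \ref{Q5} ($E_L$ is non-increasing along successors) is the place where the hypothesis $\varepsilon < m$ on all elements and predecessors matters together with $E_L(w,w') \geq \beta' \geq \beta$: it guarantees $E_L(v,v') = \beta \leq E_L(w,w') = \beta'$, so \ref{Q5} is satisfied. For \ref{Q4}: since $\varepsilon(v') = \varepsilon(v) < m \leq E_\omega(w,w')$ when $E_L(w,w') = \beta$ exactly (and the case $E_L(w,w') > \beta$ makes \ref{Q4} at the parent level vacuous for this child), the clause "$\varepsilon(x') \geq E_\omega(x,y)$" is not triggered, so no constraint is imposed — this is precisely the design purpose of $\varepsilon$ as a pseudo-Skolem function.

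The main obstacle, as I see it, is not any single axiom but the simultaneous bookkeeping: one must phrase the induction on the length of $v$ so that the inductive hypothesis carries forward the correct parameter at each level, handle the possibility that $w'$ was itself produced at a previous stage (so that the "new child" amalgamation is over a finite structure that is growing), and ensure the chosen $E$-values on the new element relative to all previously-placed $u_i'$ and $v'$-ancestors are mutually consistent under \ref{Q2}. I would organize this by first treating the single-new-element amalgamation (child $v'$ of an existing $w'$), proving that the $\mathbb{K}$-structure one obtains by setting $E(v', v)$ to the target value and $E(v', u_i') := \max_j \min(E(v', u_j^{\mathrm{old}}), E(u_j^{\mathrm{old}}, u_i'))$ — exactly the amalgamation recipe from the proof of Lemma \ref{lem:second-fraisse} — lies in $\mathbb{K}$; then the full lemma follows by iterating down the (finite) chain of predecessors of $v$, at each stage applying the one-element case. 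The verification of \ref{Q6} is automatic here because in Theorem \ref{thm:construction}'s setting $R_n = L$; in the $\wf$ case one additionally checks $\beta \in R_{|v|}$, which follows from (ii) at the parent level via \ref{R4}/\ref{R5}, but since the statement as given only invokes $\beta \in \wf(L)$ without further $R_n$ hypotheses I would flag that the $R_n$-tracking is deferred to where \ref{O4b} is in force.
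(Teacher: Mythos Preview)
Your reduction to the single-child case and the plan to realise $v'$ via the Fra\"{\i}ss\'e extension property are both correct and match the paper. The gap is in how you propose to assign the $E$-values on $v'$.

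You say you will ``set $E(v',v)$ to the target value and $E(v',u_i') := \max_j \min(E(v',u_j^{\mathrm{old}}),E(u_j^{\mathrm{old}},u_i'))$ --- exactly the amalgamation recipe from the proof of Lemma~\ref{lem:second-fraisse}.'' But these two choices are incompatible, and neither one alone works. First, condition~(i) is atomic equivalence in a language containing $E$; it forces $E(v',u_i') = E(v,u_i)$ for every $u_i$ with $v \between u_i$, so those values are not free to be set by a max--min formula. Second, the amalgamation recipe from Lemma~\ref{lem:second-fraisse} sets \emph{all} values on the new element by max--min over the common part; it would assign $E(v',v) = \max_j \min(E(v',y_j'),E(y_j',v))$, and if there are no $y_j$ with $v \between y_j$ this is $-\infty$, which fails~(ii). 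What the paper does is a hybrid that cannot be read off Lemma~\ref{lem:second-fraisse}: set $E(v',y_i') := E(v,y_i)$ (forced by~(i)), set $E(v',y_i)$ by max--min through the $y_j'$, and set $E(v',v)$ to the \emph{maximum} of the max--min value and a floor $\min((\varrho(v),0),(\beta,m))$ (or $(\gamma,0)$ when $\beta\notin R_{|v|}$). Because this floor is injected by hand, one must re-verify \ref{Q0}--\ref{Q6} from scratch; the twelve-or-so sub-cases of \ref{Q2} and the three cases of \ref{Q4} (not just the pair $(w,w')$, but also $(w',x_i)$ and $(w',x_i')$) are where the hypothesis $\varepsilon(\cdot)<m$ actually does its work, and they constitute the bulk of the proof.

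Your handling of \ref{Q6} is also off: $\beta\in R_{|v|}$ does \emph{not} follow from (ii) at the parent level via \ref{R4}/\ref{R5}. When $\beta\notin R_{|v|}$, setting $E_L(v',v)=\beta$ would violate \ref{Q6} outright; the paper repairs this by replacing $(\beta,m)$ with $(\gamma,0)$ for $\gamma$ the least element of $R_{|v|}$ above $\beta$, and then the \ref{Q4} check in case (c) uses $R_{|u|}\subseteq R_{|v|}$ to rule out $E_L(u,u')=\gamma$. This adjustment is what makes the lemma go through uniformly for both \ref{O4a} and \ref{O4b}, and it cannot be deferred.
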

\begin{proof}
We may assume that $v$ is not one of $u_1,\ldots,u_t$, as if $v = u_i$ then we could take $v' = u_i'$. Thus for no $u_i$ is $u_i \succeq v$. By repeated applications of the claim, we may also assume that $P(v)$ is among the $u_i$.

Let $u$ be the predecessor of $v$, and let $y_1,\ldots,y_k$ be those $u_i$ with $v \between u_i$. Let $x_1,\ldots,x_k$ be the predecessors of the $y_i$. Let $u'$, $y_1',\ldots,y_k'$, and $x_1',\ldots,x_k'$ be the corresponding $u_i'$. We will define a finite structure with domain consisting of $u_1,\ldots,u_t$, $u_1',\ldots,u_t'$, $v$, and a new element $v'$. We will show that this structure is in $\mathbb{K}$, and hence we may take $v'$ to be in $\mc{M}$.

Begin by defining $|v'| = |v|$, $\varepsilon(v') = \varepsilon(v)$, $\varrho(v') = \varrho(v)$, and $u_i' \prec v'$ if and only if $u_i \prec v$. Define $E$ as follows:
\begin{enumerate}
	\item $E(v',v') = (\varrho(v'),0)$,
	\item $E(v',y_i') = E(v,y_i)$.
	\item $E(v',y_i)$ is the maximum of $\min(E(v',y_j'),E(y_j',y_i))$ over all $j$,
	\item If $\beta \in R_{|v|}$, then $E(v',v)$ is the maximum of:
	\begin{enumerate}
		\item[(a)] $\min(E(v',y_i'),E(y_i',v))$ over all $i$ and
		\item[(b)] $\min((\varrho(v),0),(\beta,m))$.
	\end{enumerate}
	Otherwise, if $\beta \notin R_{|v|}$, then let $\gamma$ be the least element of $R_{|v|}$ with $\gamma > \beta$. (\ref{R5} guarantees that such a $\gamma$ exists.) Then $E(v',v)$ is the maximum of:
	\begin{enumerate}
		\item[(a)] $\min(E(v',y_i'),E(y_i',v))$ over all $i$ and
		\item[(c)] $\min((\varrho(v),0),(\gamma,0))$.
	\end{enumerate}
\end{enumerate}
Let $E(\cdot,v') = E(v',\cdot)$ in each of the cases above. Note that by (2), (3) is equivalent to defining $E(v',y_i)$ to be the maximum of $\min(E(v,y_j),E(y_j',y_i))$ over all $j$, and similarly with (4); so these are all definitions in terms of quantities we are given.

We need to check that this is defines a finite structure in $\mathbb{K}$. It is easy to see that \ref{P1}-\ref{P6} hold. \ref{Q0} and \ref{Q1} are trivial, as we set $E(v',v') = (\varrho(v'),0)$ and $E(\cdot,v') = E(v',\cdot)$ above. \ref{Q3} is easy to see from the definition of $E(v',\cdot)$.  We now check \ref{Q2}, \ref{Q5}, \ref{Q4}, and \ref{Q6}.

For \ref{Q2}, we must show that if $a \between b \between c$, then we have $E(a,b) \geq \min(E(a,c),E(c,b))$. We have a number of different cases depending on which values $a$, $b$, and $c$ take. If none of $a$, $b$, or $c$ are $v'$, then it is trivial; also, if there is any duplication, then it is trivial. Unfortunately there are a large number of possible combinations remaining. The reader might find it helpful to draw a picture for each case, using the intuition of \ref{Q2} as corresponding to the transitivity of an equivalence relation. We will frequently use the fact that \ref{Q2} holds in $\mc{M}$.

\begin{description}
	
	\item[$a = v'$, $b = v$, $c = y_i$] Let $j$ be such that $E(v',y_i) = \min(E(v',y_j'),E(y_j',y_i),)$. Then
\begin{align*}
E(v',v) &\geq \min(E(v',y_j'),E(y_j',v)) \\
& \geq \min(E(v',y_j'),E(y_j',y_i),E(y_i,v)) \\
&= \min(E(v',y_i),E(y_i,v)).
\end{align*}

	\item[$a = v'$, $b = v$, $c = y_i'$] $E(v',v) \geq \min(E(v',y_i'),E(y_i',v))$ by definition.
	
	\item[$a=v'$, $b = y_i$, $c = v$] We have three cases corresponding to (a), (b), and (c) in the definition of $E(v',v)$.

	\begin{enumerate}
		
				\item[(a)] Suppose that $E(v',v) = \min(E(v',y_j'),E(y_j',v))$ for some $j$. Then 
\begin{align*}
E(v',y_i) &\geq \min(E(v',y_j'),E(y_j',y_i)) \\
&\geq \min(E(v',y_j'),E(y_j',v),E(v,y_i)) \\ 
&= \min(E(v',v),E(v,y_i)).
\end{align*}
		
		\item[(b)] Suppose that $E(v',v) = \min((\varrho(v),0),(\beta,m))$. We know that $E(y_i',y_i) \geq \min((\varrho(v),0),(\beta,m))$, and so $E(y_i',y_i) \geq E(v',v)$. Then
		\[ E(v',y_i) \geq \min(E(v',y_i'),E(y_i',y_i)) \geq \min(E(v',v),E(v,y_i)).\]
	
		\item[(c)] Suppose that $E(v',v) = \min((\varrho(v),0),(\gamma,0))$. We know that $E(y_i',y_i) \geq \min((\varrho(v),0),(\beta,m))$, and since $E_{L}(y_i',y_i) \in R_{|v|} \cup \{- \infty\} \cup \{\varrho(v)\}$, $E(y_i',y_i) \geq \min((\varrho(v),0),(\gamma,0))$. So $E(y_i',y_i) \geq E(v',v)$. Then
		\[ E(v',y_i) \geq \min(E(v',y_i'),E(y_i',y_i)) \geq \min(E(v',v),E(v,y_i)).\]

	\end{enumerate}
	
	\item[$a = v'$, $b = y_i$, $c = y_j$] Let $k$ be such that $E(v',y_j) = \min(E(v',y_k'),E(y_k',y_j))$. 
By definition, we have
\begin{align*}
E(v',y_i) &\geq \min(E(v',y_k'),E(y_k',y_i)) \\
&\geq \min(E(v',y_k'),E(y_k',y_j),E(y_j,y_i)) \\ 
&= \min(E(v',y_j),E(y_j,y_i)).
\end{align*}

	\item[$a=v'$, $b=y_i$, $c = y_j'$] By definition, $E(v',y_i) \geq \min(E(v',y_j'),E(y_j',y_i))$.
	
	\item[$a = v'$, $b = y_i'$, $c = v$] We have three cases corresponding to (a), (b), and (c) in the definition of $E(v',v)$.

	\begin{enumerate}
		\item[(b)] Suppose that $E(v',v) = \min(E(v',y_j'),E(y_j',v))$ for some $j$. We have
\begin{align*}
E(v',y_i') & = E(v,y_i) \\
&\geq \min(E(v,y_j),E(y_j,y_i)) \\
&= \min(E(v',y_j'),E(y_j',y_i')) \\
&\geq \min(E(v',y_j'),E(y_j',v),E(v,y_i')) \\
&= \min(E(v',v),E(v,y_i')).
\end{align*}
		
		\item[(b)] Suppose that $E(v',v) = \min((\varrho(v),0),(\beta,m))$. Then
\begin{align*}
E(v',y_i') &= E(v,y_i) \\
&\geq \min(E(v,y_i'),E(y_i',y_i)) \\
&\geq \min(E(v,y_i'),(\varrho(y_i),0),(\beta,m)) \\
&= \min(E(v,y_i'),E(v',v)).
\end{align*}

		\item[(c)] Suppose that $E(v',v) = \min((\varrho(v),0),(\gamma,0))$. Then, as before, $E(y_i',y_i)) \geq \min((\varrho(y_i),0),(\gamma,0))$. So
\begin{align*}
E(v',y_i') &= E(v,y_i) \\
&\geq \min(E(v,y_i'),E(y_i',y_i)) \\
&\geq \min(E(v,y_i'),(\varrho(y_i),0),(\gamma,0)) \\
&= \min(E(v,y_i'),E(v',v)).
\end{align*}
	
	\end{enumerate}
	
	\item[$a = v'$, $b = y_i'$, $c = y_j$] Let $k$ be such that $E(v',y_j) = \min(E(v',y_k'),E(y_k',y_j))$. We have
\begin{align*}
E(v',y_i') &= E(v,y_i) \\
&\geq \min(E(v,y_k),E(y_k,y_i)) \\
&= \min(E(v',y_k'),E(y_k',y_i')) \\
&\geq \min(E(v',y_k'),E(y_k',y_j),E(y_j,y_i')) \\
&= \min(E(v',y_j),E(y_j,y_i')).
\end{align*}
	
	\item[$a = v'$, $b = y_i'$, $c = y_j'$] We have
\begin{align*}
E(v',y_i') &= E(v,y_i) \\
&\geq \min(E(v,y_j),E(y_j,y_i)) \\
&\geq \min(E(v',y_j'),E(y_j',y_i')).
\end{align*}

	\item[$a = y_i$, $b = y_j$, $c = v'$] Let $k$ and $\ell$ be such that $E(v',y_i) = \min(E(v',y_k'),E(y_k',y_i))$ and $E(v',y_j) = \min(E(v',y_\ell'),E(y_\ell',y_j))$. We have
\begin{align*}
E(y_i,y_j) &\geq \min(E(y_i,y_k'),E(y_k',y_\ell'),E(y_\ell',y_j)) \\
&= \min(E(y_i,y_k'),E(y_k,y_\ell),E(y_\ell',y_j)) \\
&\geq \min(E(y_i,y_k'),E(y_k,v),E(v,y_\ell),E(y_\ell',y_j)) \\
&= \min(E(y_i,y_k'),E(y_k',v'),E(v',y_\ell'),E(y_\ell',y_j)) \\
&= \min(E(y_i,v'),E(v',y_j)).
\end{align*}

	\item[$a = y_i$, $b = y_j'$, $c = v'$] Let $k$ be such that $E(v',y_i) = \min(E(v',y_k'),E(y_k',y_i))$. Then
\begin{align*}
E(y_i,y_j') &\geq \min(E(y_i,y_k'),E(y_k',y_j'))\\
&= \min(E(y_i,y_k'),E(y_k,y_j)) \\
&\geq \min(E(y_i,y_k'),E(y_k,v),E(v,y_j)) \\
&= \min(E(y_i,y_k'),E(y_k',v'),E(v',y_j')) \\
&= \min(E(y_i,v'),E(v',y_j')).
\end{align*}
	
	\item[$a = y_i'$, $b = y_j'$, $c = v'$] We have
\begin{align*}
E(y_i',y_j') &= E(y_i,y_j) \\
& \geq \min(E(y_i,v),E(v,y_j)) \\
&= \min(E(y_i',v'),E(v',y_j')).
\end{align*}

	\item[$a = v$, $b = y_i$, $c = v'$] Let $j$ be such that $E(v',y_i) = \min(E(v',y_j'),E(y_j',y_i))$. We have three cases corresponding to (a), (b), and (c) in the definition of $E(v',v)$.

		\begin{enumerate}
			\item[(a)] Suppose that $E(v',v) = \min(E(v',y_j'),E(y_j',v))$ for some $j$ and $E(v',y_i) = \min(E(v',y_k'),E(y_k',y_i))$ for some $k$. Then
\begin{align*}
E(v,y_i) &\geq \min(E(v,y_k'),E(y_k,y_j),E(y_j',y_i)) \\
&= \min(E(v,y_k'),E(y_k,v),E(v,y_j),E(y_j',y_i)) \\
&\geq \min(E(v,y_k'),E(y_k',v'),E(v',y_j'),E(y_j',y_i)) \\
&= \min(E(v,v'),E(v',y_i)).
\end{align*}
			
			\item[(b)] Suppose that $E(v',v) = \min((\varrho(v),0),(\beta,m))$. We have
\begin{align*}
E(v,y_i) &\geq \min(E(v,y_j),E(y_j,y_i)) \\
&= \min(E(v',y_j'),E(y_j',y_i')) \\
&\geq \min(E(v',y_j'),E(y_j',y_i),E(y_i,y_i')) \\
&= \min(E(v',y_i),E(y_i,y_i')) \\
&\geq \min(E(v',y_i),(\varrho(y_i),0),(\beta,m)) \\
&= \min(E(v',y_i),E(v',v)).
\end{align*}

			\item[(c)] Suppose that $E(v',v) = \min((\varrho(v),0),(\gamma,0))$. As before, $E(y_i,y_i') \geq \min((\varrho(v),0),(\gamma,0))$. We have
\begin{align*}
E(v,y_i) &\geq \min(E(v,y_j),E(y_j,y_i)) \\
&= \min(E(v',y_j'),E(y_j',y_i')) \\
&\geq \min(E(v',y_j'),E(y_j',y_i),E(y_i,y_i')) \\
&= \min(E(v',y_i),E(y_i,y_i')) \\
&\geq \min(E(v',y_i),(\varrho(y_i),0),(\gamma,0)) \\
&= \min(E(v',y_i),E(v',v)).
\end{align*}

		\end{enumerate}
		
	\item[$a = v$, $b = y_i'$, $c = v'$] We have three cases corresponding to (a), (b), and (c) in the definition of $E(v',v)$.

	\begin{enumerate}
		\item[(a)] Suppose that $E(v',v) = \min(E(v,y_j'),E(v,y_j))$ for some $j$. Then
\begin{align*}
E(v,y_i') &\geq \min(E(v,y_j'),E(y_j',y_i')) \\
&= \min(E(v,y_j'),E(y_j,y_i)) \\
&\geq \min(E(v,y_j'),E(y_j,v),E(v,y_i)) \\
&\geq \min(E(v,y_j'),E(y_j',v'),E(v',y_i')) \\
&= \min(E(v,v'),E(v',y_i')).
\end{align*}
	
		\item[(b)] Suppose that $E(v',v) = \min((\varrho(v),0),(\beta,m))$. Then
\begin{align*}
E(v,y_i') &\geq \min(E(v,y_i),E(y_i,y_i')) \\
&\geq \min(E(v',y_i'),(\varrho(v),0),(\beta,m)) \\
&= \min(E(v',y_i'),E(v',v)).
\end{align*}
	
		\item[(c)] Suppose that $E(v',v) = \min((\varrho(v),0),(\gamma,0))$. As before, $E(y_i,y_i') \geq \min((\varrho(v),0),(\gamma,0))$. Then
\begin{align*}
E(v,y_i') &\geq \min(E(v,y_i),E(y_i,y_i')) \\
&\geq \min(E(v',y_i'),(\varrho(v),0),(\gamma,0)) \\
&= \min(E(v',y_i'),E(v',v)).
\end{align*}

	\end{enumerate}
\end{description}
\noindent That completes the last case in the verification of \ref{Q2}.

For \ref{Q5}, we have three cases to check.
\begin{enumerate}
	\item We will show that $E_{L}(v',v) \leq E_{L}(u',u)$. We have three subcases.
	
	\begin{enumerate}
	
		\item[(a)] $E(v',v) = \min(E(v',y_i'),E(y_i',v))$ for some $i$. Then
\begin{align*}
E_{L}(u',u) &\geq \min(E_{L}(u',x_i'),E_{L}(x_i',u)) \\
&= \min(E_{L}(u,x_i),E_{L}(x_i',u)) \\
& \geq \min(E_{L}(v,y_i),E_{L}(y_i',v)) \\
& = \min(E_{L}(v',y_i'),E_{L}(y_i',v)) \\
&= E_{L}(v',v).
\end{align*}
	
		\item[(b)] $E_{L}(v',v) = \min(\varrho(v),\beta)$. Then $E_{L}(u,u') \geq \min(\varrho(u),\beta) \geq E_{L}(v',v)$.

		\item[(c)] $E_{L}(v',v) = \min(\varrho(v),\gamma)$. So $\beta \notin R_{|v|}$. Then, since $R_{|u|} \subseteq R_{|v|}$, $\beta \notin R_{|u|}$ and the least element of $R_{|u|}$ which is greater than $\beta$ is at least $\gamma$. Since $E_{L}(u,u') \geq \min(\varrho(u),\beta)$, $E_{L}(u,u') \geq \min(\varrho(u),\gamma) \geq E_{L}(v',v)$.

	\end{enumerate}

	\item We will show that $E_{L}(u',x_i) \geq E_{L}(v',v)$. Let $j$ be such that $E(v',y_i) = \min(E(v',y_j'),E(y_j',y_i))$. Then
\begin{align*}
E_{L}(u',x_i) &\geq \min(E_{L}(u',x_j'),E_{L}(x_j',x_i)) \\
&= \min(E_{L}(u,x_j),E_{L}(x_j',x_i)) \\
& \geq \min(E_{L}(v,y_j),E_{L}(y_j',y_i)) \\
&= \min(E_{L}(v',y_j'),E_{L}(y_j',y_i)) \\
&= E_{L}(v',v).
\end{align*}

	\item $E_{L}(v',y_i') = E_{L}(v,y_i) \leq E_{L}(u,x_i) = E_{L}(u',x_i')$.
	
\end{enumerate}

For \ref{Q4}, we again have three cases to check.
\begin{enumerate}
	\item Suppose that $\alpha = E_{L}(v',y_i) = E_{L}(u',x_i)$ and $n = \varepsilon(v') \geq E_{\omega}(u',x_i)$. Let $j$ be such that
	\[ E(v',y_i) = \min(E(v',y_j'),E(y_j',y_i)) = \min(E(v,y_j),E(y_j',y_i)).\]
	Thus $E(v,y_j) \geq \alpha$ and $E(y_j',y_i) \geq \alpha$. Either $E_{\omega}(x_j',x_i) > n$ or
	\[ E_{L}(x_j',x_i) > E_{L}(y_j',y_i) \geq \alpha.\]
	We always have $E_{L}(x_j',x_i) \geq \alpha$, and hence $E_{L}(x_j',x_i) > (\alpha,n)$.
	Similarly, either $E_{\omega}(u,x_j) > n$ or
	\[ E_{L}(u,x_j) > E_{L}(v,y_j) \geq \alpha\]
	and so $E_{L}(u,x_j) > (\alpha,n)$.
	Thus 
	\[ E(u',x_i) \geq \min(E(u',x_j'),E(x_j',x_i)) = \min(E(u,x_j),E(x_j',x_i)) > (\alpha,n).\]
	This is a contradiction.

	\item Suppose that $E_{L}(v',y_i') = E_{L}(u',x_i')$ and $\varepsilon(v') \geq E_\omega(u',x_i')$. We have $E(v',y_i') = E(v,y_i)$, $\varepsilon(v') = \varepsilon(v)$, and $E(u',x_i') = E(u,x_i)$. This contradicts \ref{Q4}.

	\item Suppose that $E_{L}(v',v) = E_{L}(u',u)$ and $\varepsilon(v') \geq E_{\omega}(u,u')$. We have three subcases, depending on how $E(v',v)$ gets its value.

	\begin{enumerate}

		\item[(a)] Suppose that for some $i$, 
		\[ E(v',v) = \min(E(v',y_i'),E(y_i',v)) = \min(E(v,y_i),E(y_i',v)).\]
		We have
		\[ E(u',u) \geq \min(E(u',x_i'),E(x_i',u)) = \min(E(u,x_i),E(x_i',u)).\]
		Either $E_{L}(v,y_i) < E_{L}(u,x_i)$ or $\varepsilon(v) < E_\omega(u,x_i)$,	and either $E_{L}(v,y_i') < E_{L}(u,x_i')$ or $\varepsilon(v) < E_\omega(u,x_i')$.
		
		Suppose that $E(u',u) = E(u,x_i) \leq E(x_i',u)$. The other case is similar. Then, since $E_{L}(u',u) = E_{L}(v',v)$ we must have $E_L(v,y_i) = E_L(u,x_i)$; otherwise, we would have
		\[ E_L(v',v) \leq E_L(v,y_i) < E_L(u,x_i) = E_L(u',u).\]
Hence $\varepsilon(v) < E_\omega(u,x_i)$. Thus $E_\omega(u',u) > \varepsilon(v)$, a contradiction.
		
		\item[(b)] Suppose that $E_{L}(v',v) = \min((\varrho(v),0),(\beta,m))$. Then since $E(u,u') \geq \min((\varrho(u),0),(\beta,m))$ and $\varrho(v) < \varrho(u)$, if $E_{L}(v',v) = E_{L}(u,u')$ then they are both equal to $\beta$, and $E(u,u') \geq (\beta,m)$. But $\varepsilon(v') < m$, which contradicts $\varepsilon(v') \geq E_\omega(u,u') \geq m$. 

		\item[(c)] Suppose that $E_{L}(v',v) = \min((\varrho(v),0),(\gamma,0))$. Then since $E(u,u') \geq \min((\varrho(u),0),(\beta,m))$, by choice of $\gamma$ and using the fact that $R_{|u|} \subseteq R_{|v|}$, $E(u,u') \geq \min((\varrho(u),0),(\gamma,0))$. If $E_{L}(v',v) = E_{L}(u,u')$ then they are both equal to $\gamma$. But then $\gamma \in R_{|u|}$, and so by \ref{R3}, in $R_{|v|}$ there is some $\gamma'$ with $\beta \leq \gamma' < \gamma$. This contradicts the choice of $\gamma$.

	\end{enumerate}
\end{enumerate}

For \ref{Q6}, we once more have three cases to check.
\begin{enumerate}
	\item We will show that $E_{L}(v',v) \in R_{|v|} \cup \{-\infty,\varrho(v)\}$. As usual, we have three subcases.

	\begin{enumerate}
		\item[(a)] $E_{L}(v',v) = \min(E_{L}(v',y_i'),E_{L}(y_i',v))$ for some $i$. Then each of $E_{L}(v',y_i') = E_{L}(v,y_i)$ and $E_{L}(y_i',v)$ is in $R_{|v|} \cup \{-\infty,\varrho(v)\}$ and so the same is true of $E_{L}(v',v)$.

		\item[(b)] $E_{L}(v',v) = \min((\varrho(v),0),(\beta,m))$ and $\beta \in R_{|v|}$. This is immediate.
		
		\item[(c)] $E_{L}(v',v) = \min((\varrho(v),0),(\gamma,0))$ and $\gamma \in R_{|v|}$. This is also immediate.
	\end{enumerate}

	\item  Let $j$ be such that $E(v',y_i) = \min(E(v',y_j'),E(y_j',y_i))$. Then each of $E_{L}(v',y_j') = E_{L}(v,y_j)$ and $E_{L}(y_j',y_i)$ is in $R_{|v|} \cup \{-\infty,\varrho(v)\}$, so the same is true of $E_{L}(v',y_i)$.
	
	\item $E_{L}(v',y_i') = E_{L}(v,y_i)$ which is in $R_{|v|} \cup \{-\infty,\varrho(v)\}$.
	
\end{enumerate}

We have now finished showing that the finite structure we defined above is in the class $\mathbb{K}$. So we can may assume that $v'$ is in $\mc{M}$. Note that $E(v',v) > -\infty$, so by \ref{A1}, $A_i(v') \Longleftrightarrow A_i(v)$. Thus
\[ u_1,\ldots,u_t,v \equiv_{\at} u_1',\ldots,u_t',v'. \]
By definition, we have $E(v',v) \geq \min((\varrho(v),0),(\beta,m))$. This completes the proof of the lemma.
\end{proof}

Recall that we defined an equivalence relation $\sim_\alpha$ by $x \sim_\alpha y$ if $E_{L}(x,y) \geq \min(\varrho(x),\alpha)$. We can expand this to an equivalence relation on tuples as follows.

\begin{definition}
Given $\alpha \in L$ and $x_1,\ldots,x_r$ and $x_1',\ldots,x_r'$ from $\mc{M}$ both closed under the predecessor relation $P$, define:
\[ x_{1},\ldots,x_r \sim_{\alpha} x_1',\ldots,x_r' \]
if and only if
\[ x_{1},\ldots,x_r \equiv_{\at} x_{1}',\ldots,x_r' \text{ and for all $i$, } x_i  \sim_{\alpha} x_i'.\]
If $x_1,\ldots,x_r$ and $x_1',\ldots,x_r'$ are not closed under predecessors, we can close them under predecessors in the natural way to extend $\sim_\alpha$ to a relation on all pairs of tuples.
\end{definition}

Note that $\bar{x} \sim_0 \bar{y}$ asks that $\bar{x}$ and $\bar{y}$ satisfy the same atomic formulas, whereas $\bar{x} \leq_0 \bar{y}$ asks that they satisfy the same atomic formulas with bounded G\"odel numbers. However, if we replace $\sim_0$ by $\leq_0$, these relation $\sim_\alpha$ are non-standard back-and-forth relations. Note that the relations $\sim_\alpha$ are symmetric, whereas back-and-forth relations are, a priori, not necessarily symmetric.

\begin{lemma}\label{lem:equiv}\label{lem:are-ns-bf}
$\leq_0$ and $(\sim_\alpha)_{0 < \alpha \in L}$ are non-standard back-and-forth relations in the sense of Definition \ref{def:nsbf}.
\end{lemma}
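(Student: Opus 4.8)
The plan is to verify the two clauses of Definition \ref{def:nsbf} for the sequence $\leq_0, (\sim_\alpha)_{0 < \alpha \in L}$ on $\mc{M}$. The base clause is immediate: $\leq_0$ is by definition the relation that compares quantifier-free formulas with bounded Gödel number, so there is nothing to prove there. All the work is in the successor/limit clause: given $\alpha \in L$ not the least element, we must show that $\bar{x} \sim_\alpha \bar{y}$ holds if and only if for every $\beta <_L \alpha$ and every tuple $\bar{d}$ there is $\bar{c}$ with $\bar{y}\bar{d} \precsim_\beta \bar{x}\bar{c}$, where $\precsim_\beta$ means $\sim_\beta$ for $\beta > 0$ and $\leq_0$ for $\beta = 0$. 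Since all the $\sim_\alpha$ are symmetric (they are built from the symmetric function $E$ via \ref{Q1}), the back-and-forth condition only needs to be checked in one direction, which simplifies bookkeeping.

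The key steps, in order. First, reduce to tuples closed under the predecessor relation $P$, since $\sim_\alpha$ on arbitrary tuples is defined by first closing under predecessors; this is harmless because $\bar{d}$ and its closure under $P$ determine each other. Second, for the "only if" direction, assume $\bar{x} \sim_\alpha \bar{y}$, fix $\beta <_L \alpha$ and $\bar{d}$; we want $\bar{c}$ with $\bar{x}\bar{c} \sim_\beta \bar{y}\bar{d}$ (after renaming, going from $\bar{y}\bar{d}$ to $\bar{x}\bar{c}$). We process the elements of $\bar{d}$ one at a time, each time producing a matching element, by repeatedly invoking Lemma \ref{lem:extension}: that lemma, with $m$ chosen larger than all $\varepsilon$-values occurring among $\bar{x},\bar{y},\bar{d}$ and their predecessors, takes a partial $\equiv_{\at}$-isomorphism satisfying $E(u_i,u_i') \geq \min((\varrho(u_i),0),(\beta,m))$ and extends it by one element $v$, producing $v'$ with $E(v',v) \geq \min((\varrho(v),0),(\beta,m))$. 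Since $\min((\varrho(v),0),(\beta,m)) \geq \min(\varrho(v),\beta) = \min(\varrho(v),\beta)$ in the first coordinate, this gives $v \sim_\beta v'$, and together with $\equiv_{\at}$ this is exactly $\bar{x}\bar{c}\,v' \sim_\beta \bar{y}\bar{d}\,v$-type data we need. The hypothesis $\bar{x} \sim_\alpha \bar{y}$ supplies the base case of this induction: since $\beta <_L \alpha$, from $E(x_i,y_i) \geq \min((\varrho(x_i),0),(\alpha,0))$ we get $E(x_i,y_i) \geq \min((\varrho(x_i),0),(\beta,m))$, so $\bar{x},\bar{y}$ already satisfy hypotheses (i)--(ii) of Lemma \ref{lem:extension} with this $\beta$ and $m$.

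Third, for the "if" direction, suppose the back-and-forth condition holds for all $\beta <_L \alpha$; we must deduce $\bar{x} \sim_\alpha \bar{y}$, i.e. $\bar{x} \equiv_{\at} \bar{y}$ and $x_i \sim_\alpha y_i$ for each $i$. Taking $\bar{d}$ empty and $\beta = 0$ gives $\bar{y} \leq_0 \bar{x}$, and by symmetry $\bar{x} \equiv_{\at} \bar{y}$; in particular $\varrho$ and $\varepsilon$ and the tree structure agree, and $A_i(x_j) \Leftrightarrow A_i(y_j)$, so by \ref{A1} $E(x_j,y_j) > -\infty$ for matching coordinates. To see $x_i \sim_\alpha y_i$, i.e. $E_L(x_i,y_i) \geq \min(\varrho(x_i),\alpha)$, argue by contradiction: if $E_L(x_i,y_i) = \delta <_L \min(\varrho(x_i),\alpha)$, pick $\beta$ with $\delta <_L \beta <_L \alpha$ (using $\varrho(x_i) > \delta$ we can also ensure $\beta \leq \varrho(x_i)$, or handle the corner case $\delta+1$ directly), and use the back-and-forth condition with $\bar{d}$ consisting of a carefully chosen successor $x_i'$ of $x_i$ witnessing, via \ref{Q4}, that $x_i$ and $y_i$ cannot be $\beta$-equivalent — this is the standard "witness to inequivalence" argument and is precisely the content of the parenthetical remark in the paper about $x\bar{x}' \nequiv_\alpha y\bar{y}'$. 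The continuity of the $\sim_{\alpha,n}$ (noted after \ref{Q6}) is what lets us pass between $E_L(x_i,y_i)$ and the relations $\sim_\beta$ at limit $\alpha$.

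I expect the main obstacle to be the "if" direction — specifically, turning a failure of $E_L(x_i,y_i) \geq \min(\varrho(x_i),\alpha)$ into an actual witness $\bar d$ that defeats the back-and-forth game at some $\beta < \alpha$. One must produce a successor of $x_i$ (at the right $\varepsilon$-level, exploiting that $E$ records an $\omega$-coordinate precisely to make "no matching successor" a positive, checkable condition via \ref{Q4}), verify that no successor of $y_i$ matches it well enough, and chase this up the tree through the predecessors using \ref{Q5} and \ref{Q2}; the $R_n$-bookkeeping from \ref{Q6} must be kept track of but, under the convention $R_n = L$, is vacuous. The "only if" direction, by contrast, is a clean induction whose single nontrivial input, Lemma \ref{lem:extension}, has already been proved.
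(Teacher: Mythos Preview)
Your forward direction (``only if'') is exactly the paper's: invoke Lemma~\ref{lem:extension} with $m$ chosen larger than all occurring $\varepsilon$-values, and extend one element at a time. Nothing to add there.

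Your backward direction has the right ingredients but two concrete missteps. First, ``take $\bar d$ empty and $\beta=0$ to get $\bar y \leq_0 \bar x$, hence $\equiv_{\at}$'' does not work: $\leq_0$ only controls quantifier-free formulas with G\"odel number below the tuple length, so the empty $\bar d$ gives you nothing new. The paper handles this contrapositively (its Case~1): if $\bar x \nequiv_{\at} \bar x'$, then by \ref{A2} finitely many formulas determine the full atomic diagram, so padding with a long enough dummy tuple produces a $\nleq_0$ witness. Equivalently, in your direct approach you would need $\bar d$ of \emph{arbitrary} length, not empty.

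Second, your plan to ``pick $\beta$ with $\delta <_L \beta <_L \alpha$'' is the wrong move, and the parenthetical ``handle the corner case $\delta+1$ directly'' is hiding the main case, not a corner. The paper (its Case~3) sets $\beta = \delta = E_L(x_i,x_i')$ itself and chooses a successor $y$ of $x_i$ with $\varrho(y)=\beta$ and $\varepsilon(y)\geq E_\omega(x_i,x_i')$; then \ref{Q4} and \ref{Q5} together force $E_L(y,y')<\beta=\varrho(y)$ for every successor $y'$ of $x_i'$, giving $\bar x,y \nsim_\beta \bar x',y'$. Choosing $\beta=\delta$ rather than something strictly above it is what makes the $\varrho(y)=\beta$ alignment work cleanly, and it covers both successor and limit $\alpha$ uniformly (with a small additional argument when $\beta=0$, again reducing to a $\nleq_0$ witness via padding). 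Your remaining Case~2 in the paper, where $\varrho(x_i)<\alpha$, is handled by the trivial observation that $\bar x \nsim_{\varrho(x_i)} \bar x'$ with $\varrho(x_i)<\alpha$, so no extension tuple is needed at all.
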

\begin{proof}
Suppose that $\alpha > 0$ and
\[ x_{1},\ldots,x_r \sim_{\alpha} x_1',\ldots,x_r'. \]
Suppose that we are given $y_1,\ldots,y_s$ and $\beta < \alpha$. We will find $y_1',\ldots,y_s'$ such that
\[ x_{1},\ldots,x_r,y_1,\ldots,y_s \sim_{\beta} x_{1}',\ldots,x_r',y_1',\ldots,y_s'.\]
We already know that
\[ x_{1},\ldots,x_r \equiv_{\at} x_1',\ldots,x_r'. \]
We may assume that the $y_i$ are closed under predecessors, and that the predecessor of each $y_i$ appears earlier in the list (or in $x_i$). Let $m \in \omega$ be large enough that for any element $z$ which we have mentioned so far (the $x_i$, $x_i'$, and $y_i$) we have $\bar{\varepsilon}(z) \in \{0,\ldots,m\}^{< \omega}$. Note that, by choice of $m$, $x_1,\ldots,x_r$ and $x_1',\ldots,x_r'$ satisfy (i) and (ii) of Lemma \ref{lem:extension} (for this $\beta$ and $m$). So using the lemma we get a $y_1'$ such that $x_1,\ldots,x_r,y_1$ and $x_1',\ldots,x_r',y_1'$ also satisfy (i) and (ii). But then we can use the lemma to get a $y_2'$, and so on, until we have $y_1',\ldots,y_s'$ as desired.

On the other hand, suppose that $\alpha > 0$ and
\[ x_{1},\ldots,x_r \nsim_{\alpha} x_1',\ldots,x_r'. \]
We need to show that there are $y_1,\ldots,y_s$ and $\beta < \alpha$ such that for all $y_1',\ldots,y_s'$,
\[ x_{1},\ldots,x_r,y_1,\ldots,y_s \nsim_{\beta} x_1',\ldots,x_r',y_1',\ldots,y_s'. \]
There are three cases.

\begin{case}\label{case:one}
$x_{1},\ldots,x_r \nequiv_{\at} x_{1}',\ldots,x_r'$.
\end{case}
\begin{proof}
There are only finitely many constant symbols from $L$ required to determine the values of all of the functions in the language on $x_{1},\ldots,x_r$, and by \ref{A2}, finitely many indices $j$ for relations $A_j$ are required to determine which of the $A_j$ hold of $x_{1},\ldots,x_r$. In particular, a finite set of formulas from the language determines the entire atomic diagram of $x_{1},\ldots,x_r$. Hence, for any arbitrary choice of $y_1,\ldots,y_s$ with $s$ an upper bound on the G\"odel numbers of those finitely many formulas, for all $y_1',\ldots,y_s'$,
\[ x_{1},\ldots,x_r,y_1,\ldots,y_s \nleq_{0} x_{1}',\ldots,x_r',y_1',\ldots,y_s'.\qedhere\]
\end{proof}

In the other two cases, we have
\[ x_{1},\ldots,x_r \equiv_{\at} x_{1}',\ldots,x_r'. \]
It follows (by \ref{A1}) that $E(x_i,x_i') > -\infty$ for each $i$. Also, since
\[ x_{1},\ldots,x_r \nsim_{\alpha} x_1',\ldots,x_r' \]
there is some $i$ such that $E_{L}(x_i,x_i') < \min(\varrho(x_i),\alpha)$.

\begin{case} $x_{1},\ldots,x_r \equiv_{\at} x_{1}',\ldots,x_r'$ and for some $i$, $E_{L}(x_i,x_i') < \varrho(x_i) < \alpha$.
\end{case}
\begin{proof}
We have
\[ x_{1},\ldots,x_r \nsim_{\varrho(x_i)} x_1',\ldots,x_r'. \]
Since $\varrho(x_i) < \alpha$, we are done in this case.
\end{proof}

\begin{case} $x_{1},\ldots,x_r \equiv_{\at} x_{1}',\ldots,x_r'$ and for some $i$, $E_{L}(x_i,x_i') < \alpha \leq \varrho(x_i)$.
\end{case}
\begin{proof}
Recall that $E(x_i,x_i') > -\infty$. Let $\beta = E_{L}(x_i,x_i')$ and $\ell = E_{\omega}(x_i,x_i')$. There is a successor $y$ of $x_i$ with $\varrho(y) = \beta < \varrho(x_i)$ and $\varepsilon(y) \geq \ell$. By \ref{Q5} and \ref{Q4}, for all $y'$ successors of $x_i'$, $E_{L}(y,y') < \beta$. Then
\[ x_{1},\ldots,x_r,y \nsim_{\beta} x_{1}',\ldots,x_r',y' \]
for all $y'$. If $\beta > 0$, then we are already done.
Otherwise, if $\beta = 0$ is the least element of $L$, then by \ref{A1} for all $y'$ we have
\[ x_{1},\ldots,x_r,y \nequiv_{\at} x_{1}',\ldots,x_r',y'. \]
As in Case \ref{case:one} for any arbitrary choice of $y_1,\ldots,y_s$ with $s$ sufficiently large, for all $y',y_1',\ldots,y_s'$,
\[ x_{1},\ldots,x_r,y,y_1,\ldots,y_s \nleq_{0} x_{1}',\ldots,x_r',y',y_1',\ldots,y_s'. \qedhere\]
\end{proof}

These three cases end the proof of the lemma.
\end{proof}

\begin{corollary}\label{lem:b-f-leq-same}
For $\alpha \in \wf(O)$, $\alpha > 0$, and $\bar{a},\bar{b} \in \mc{M}$, the following are equivalent:
\begin{enumerate}
	\item $\bar{a} \sim_\alpha \bar{b}$,
	\item $\bar{a} \geq_\alpha \bar{b}$,
	\item $\bar{a} \leq_\alpha \bar{b}$.
\end{enumerate}
\end{corollary}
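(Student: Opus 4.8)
The plan is to deduce Corollary~\ref{lem:b-f-leq-same} directly from Lemma~\ref{lem:are-ns-bf} together with general facts about back-and-forth relations. The key point is that $\leq_0$ and $(\sim_\alpha)_{0 < \alpha \in L}$ are \emph{symmetric} non-standard back-and-forth relations, and on the well-founded part of $L$ the non-standard back-and-forth relations agree with the standard ones $\leq_\alpha$ by the remark following Definition~\ref{def:nsbf}. So for $\alpha \in \wf(O)$ with $\alpha > 0$, the relation $\sim_\alpha$ (with $\sim_0$ replaced by $\leq_0$, which does not change $\sim_\alpha$ for $\alpha \geq 1$) simply \emph{is} the standard relation $\leq_\alpha$ on $\mc{M}$. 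Since $\sim_\alpha$ is symmetric, $\leq_\alpha$ is symmetric on $\mc{M}$, and hence $\bar a \leq_\alpha \bar b \iff \bar b \leq_\alpha \bar a \iff \bar a \geq_\alpha \bar b$, and all three coincide with $\sim_\alpha$.

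In more detail, first I would observe that for $\alpha \geq 1$, $\bar x \leq_\alpha \bar y$ depends only on the relations $\leq_\beta$ for $\beta < \alpha$ (and ultimately, through the recursion, only on $\leq_0$, never on the distinction between $\leq_0$ and $\sim_0$ once we are already at level $\alpha \geq 1$). Therefore the sequence $\leq_0, (\sim_\alpha)_{0 < \alpha \in L}$, being a sequence of non-standard back-and-forth relations by Lemma~\ref{lem:are-ns-bf} and agreeing with $\leq_0$ at the bottom, is forced on $\wf(O)$ to coincide with the standard back-and-forth relations: by the remark after Definition~\ref{def:nsbf}, $\precsim_\alpha = \leq_\alpha$ for all $\alpha \in \wf(L)$. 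This gives $\sim_\alpha = \leq_\alpha$ on $\mc{M}$ for $0 < \alpha \in \wf(O)$, which is the equivalence of (1) and (3). The equivalence of (1) and (2) then follows because $\sim_\alpha$ is by construction a symmetric relation (it is defined via $E$, which is symmetric by~\ref{Q1}, plus $\equiv_{\at}$, which is symmetric), so $\bar a \sim_\alpha \bar b$ implies $\bar b \sim_\alpha \bar a$, i.e.\ $\bar b \leq_\alpha \bar a$, i.e.\ $\bar a \geq_\alpha \bar b$; conversely $\bar a \geq_\alpha \bar b$ means $\bar b \leq_\alpha \bar a = \bar b \sim_\alpha \bar a = \bar a \sim_\alpha \bar b$.

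The only subtlety I anticipate is bookkeeping around the base level: $\sim_0$ asks for agreement on \emph{all} atomic formulas while $\leq_0$ asks only for agreement on atomic formulas with bounded G\"odel number, so strictly speaking $\sim_0 \neq \leq_0$. This is precisely why the corollary is stated for $\alpha > 0$ and why the lemma statement replaces $\sim_0$ by $\leq_0$; I would note explicitly that for $\alpha \geq 1$ the recursive clause of Definition~\ref{def:nsbf} only ever invokes the lower relations at strictly smaller indices, and that once $\alpha \geq 1$ the relation $\sim_\alpha$ built on $\leq_0$ is identical to the one built on $\sim_0$ — indeed the proof of Lemma~\ref{lem:are-ns-bf} already handles Case~\ref{case:one} and the $\beta = 0$ subcase of the last case using $\leq_0$ rather than $\equiv_{\at}$, so this has effectively been arranged. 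With that identification in hand, the corollary is immediate. Thus the main (and essentially only) work is the clean invocation of the uniqueness of non-standard back-and-forth relations on $\wf(L)$; there is no new combinatorial content.

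\begin{proof}
By Lemma~\ref{lem:are-ns-bf}, $\leq_0$ together with $(\sim_\alpha)_{0 < \alpha \in L}$ is a sequence of non-standard back-and-forth relations on $\mc{M}$ in the sense of Definition~\ref{def:nsbf}. By the remark following Definition~\ref{def:nsbf}, non-standard back-and-forth relations are uniquely determined on $\wf(L)$ and there coincide with the standard back-and-forth relations; hence $\sim_\alpha$ is exactly the relation $\leq_\alpha$ on $\mc{M}$ for every $\alpha \in \wf(O)$ with $\alpha > 0$. (Here we use that for $\alpha \geq 1$ the recursive clause of Definition~\ref{def:nsbf} refers only to relations at indices strictly below $\alpha$, so replacing $\sim_0$ by $\leq_0$ is harmless.) This proves the equivalence of (1) and (3).

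Finally, the relation $\sim_\alpha$ is symmetric: on tuples it is defined from $\equiv_{\at}$ and the conditions $x_i \sim_\alpha x_i'$, i.e.\ $E_{L}(x_i,x_i') \geq \min(\varrho(x_i),\alpha)$, and $E$ is symmetric by~\ref{Q1} while $\varrho(x_i) = \varrho(x_i')$ whenever $x_i \between x_i'$. Hence $\bar a \sim_\alpha \bar b$ if and only if $\bar b \sim_\alpha \bar a$. Combining this with the equivalence of (1) and (3), we get $\bar a \leq_\alpha \bar b \iff \bar a \sim_\alpha \bar b \iff \bar b \sim_\alpha \bar a \iff \bar b \leq_\alpha \bar a \iff \bar a \geq_\alpha \bar b$, which establishes the equivalence of (1), (2), and (3).
\end{proof}
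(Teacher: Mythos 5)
Your proposal is correct and matches the paper's (implicit) argument: the corollary is stated without proof precisely because it follows immediately from Lemma~\ref{lem:are-ns-bf}, the uniqueness of non-standard back-and-forth relations on the well-founded part noted after Definition~\ref{def:nsbf}, and the symmetry of $\sim_\alpha$. Your careful handling of the $\leq_0$ versus $\sim_0$ distinction at the base level is exactly the right bookkeeping.
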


\begin{lemma}\label{lem:aut-orbit}
Let $\mc{M} \models T(L)$. Then there is an automorphism of $\mc{M}$ taking $\bar{a}$ to $\bar{a}'$ if and only if $\bar{a} \sim_\alpha \bar{a}'$ for all $\alpha \in \wf(L)$.
\end{lemma}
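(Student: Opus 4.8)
The plan is to prove the two directions separately, using the machinery already built. For the "only if" direction, suppose $g$ is an automorphism of $\mc{M}$ with $g(\bar{a}) = \bar{a}'$. Then for every $\alpha \in L$, the relation $\sim_\alpha$ is preserved by $g$ — this is immediate from the definition of $\sim_\alpha$ in terms of $E$ and the atomic diagram, both of which are preserved by automorphisms — and since $\bar{a} \sim_\alpha \bar{a}$ trivially (by \ref{Q0} and reflexivity), we get $\bar{a} \sim_\alpha \bar{a}'$ for all $\alpha \in L$, in particular for all $\alpha \in \wf(L)$.

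For the "if" direction, suppose $\bar{a} \sim_\alpha \bar{a}'$ for all $\alpha \in \wf(L)$. The natural approach is to run a back-and-forth argument to build the automorphism. By Lemma~\ref{lem:are-ns-bf}, $\leq_0$ together with $(\sim_\alpha)_{0 < \alpha \in L}$ are non-standard back-and-forth relations on $\mc{M}$, and by Corollary~\ref{lem:b-f-leq-same} the relations $\sim_\alpha$ for $\alpha \in \wf(L)$ agree with the standard back-and-forth relations $\leq_\alpha$. If $\wf(L) = L$, then $L$ is an ordinal and we would like to say that the standard back-and-forth relations stabilize; but $L$ need not be a limit ordinal of the form where everything collapses, so instead I would do the following. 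Consider the set of finite partial maps
\[ p = \{ \bar{a} \mapsto \bar{b} : \bar{a} \sim_\alpha \bar{b} \text{ for all } \alpha \in \wf(L) \}. \]
The key claim is that this family has the back-and-forth property. Given $\bar{a} \mapsto \bar{b}$ in $p$ and a new element $c$, I need $d$ with $\bar{a}c \mapsto \bar{b}d \in p$, i.e., $\bar{a}c \sim_\alpha \bar{b}d$ for all $\alpha \in \wf(L)$. This is exactly what Lemma~\ref{lem:extension} gives, uniformly: for each $\beta \in \wf(L)$ and each $m$, after closing under predecessors and choosing $m$ large enough to bound all the $\bar\varepsilon$-values, hypotheses (i) and (ii) of Lemma~\ref{lem:extension} hold (using that $\bar{a} \sim_{\beta} \bar{b}$, hence each $E(a_i, b_i) \geq \min((\varrho(a_i),0),(\beta,m))$ — note $\bar{a}\sim_\beta\bar{b}$ gives the $L$-coordinate bound, and we may assume the $\omega$-coordinate is handled by taking $m$ minimal among those already appearing, which it is by \ref{Q6} combined with the structure of $E$ on the given finite tuple). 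The lemma then produces $d$ with $\bar{a}c \equiv_{\at} \bar{b}d$ and $E(c,d) \geq \min((\varrho(c),0),(\beta,m))$. The subtlety is that the $d$ produced depends on $\beta$; I need a single $d$ that works for all $\beta \in \wf(L)$ simultaneously.

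The main obstacle, then, is precisely this uniformity: extracting one $d$ good for every $\alpha \in \wf(L)$ at once. I expect this to be resolved by a genericity/saturation argument over $\mc{M}$ using the Fra\"{\i}ss\'e-limit structure, or more directly as follows. First handle the easy case: if for the fixed $c$ there is no $b_i$ with $b_i \between$-related to where $c$ should sit, or $\varrho(c) \in \wf(L)$ is itself a successor/below some threshold, then Lemma~\ref{lem:extension} with $\beta = \varrho(c)$ (or $\beta$ just above) already gives $E(c,d) = (\varrho(c),0)$, i.e., $c \sim_\alpha d$ for \emph{all} $\alpha$, and we are done. The remaining case is $\varrho(c) \notin \wf(L)$ or $\varrho(c)$ a limit of $\wf(L)$ from below; here I claim the finite structures produced by Lemma~\ref{lem:extension} for increasing $\beta$ form a coherent chain whose union still lies in $\mathbb{K}$ (the verification is the same case analysis as in the proof of Lemma~\ref{lem:extension}, now with $E(c,d) = \sup_\beta \min((\varrho(c),0),(\beta,m_\beta))$, which is $\geq \beta$ for every $\beta\in\wf(L)$, so still $> -\infty$ and $\leq (\varrho(c),0)$ by \ref{Q3}), hence embeds into $\mc{M}$ by ultrahomogeneity of the Fra\"{\i}ss\'e limit, yielding the desired single $d$. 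Finally, by \ref{A1}, $E(c,d) > -\infty$ forces $A_i(c) \Leftrightarrow A_i(d)$, so $\bar{a}c \equiv_{\at} \bar{b}d$ and hence $\bar{a}c \mapsto \bar{b}d \in p$. The symmetric direction (given $d$, find $c$) is identical since $\sim_\alpha$ is symmetric. Thus $p$ has the back-and-forth property; since $\bar{a}\mapsto\bar{b}\in p$ implies $\bar a,\bar b$ satisfy the same atomic formulas, any such map extends to an automorphism, completing the proof.
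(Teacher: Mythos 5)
Your ``only if'' direction is fine, and your treatment of the case where $L$ is well-founded matches the paper's: since the tuple in hand is finite, one can pick $\beta \in \wf(L)$ strictly above every $\varrho$-value occurring in it, and Lemma \ref{lem:extension} then forces $E(c,d) = (\varrho(c),0)$, which is $\sim_\alpha$-equivalence for \emph{every} $\alpha$ simultaneously; so in that case the uniformity problem you correctly identify genuinely evaporates.

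The gap is in the case where $L$ is not well-founded, which is the case that matters. If $\varrho(c) \notin \wf(L)$, then no $\beta \in \wf(L)$ dominates $\varrho(c)$, and your proposed repair does not work as stated: the structures produced by Lemma \ref{lem:extension} for different $\beta$ are pairwise incompatible one-point extensions (each introduces its own new element $d_\beta$ with its own $E$-values), so there is no ``coherent chain'' whose union one could take; and $\sup_{\beta \in \wf(L)} \min((\varrho(c),0),(\beta,m_\beta))$ is not a well-defined element of $\{-\infty\} \cup L \times \omega$, because $\wf(L)$ has no maximum (by \ref{O2}) and $L$ is an arbitrary ill-founded order with no completeness, so that set of values has no least upper bound. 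The paper avoids the issue entirely: in the non-well-founded case it does not run a back-and-forth on your family $p$ at all. It first observes that for each coordinate, either $\varrho(a_i) \in \wf(L)$, in which case the hypothesis already forces $E(a_i,a_i') = (\varrho(a_i),0)$, or else $E_{L}(a_i,a_i')$ must itself lie in $L \setminus \wf(L)$ (it is $\geq \alpha$ for every $\alpha \in \wf(L)$, and $\wf(L)$ is unbounded in itself). Hence $\bar{a} \sim_\gamma \bar{a}'$ already holds at a single \emph{non-standard} level $\gamma \in L \setminus \wf(L)$, and Lemma \ref{lem:non-standard-bf} --- whose back-and-forth argument succeeds precisely because below any non-standard index there is another non-standard index to descend to --- then produces the automorphism. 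This descent to a non-standard level of the relations $\sim_\alpha$ is the idea your proposal is missing; without it, or a genuine replacement for the sup construction, the proof is incomplete.
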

\begin{proof}
If, for some $\alpha \in \wf(L)$, $\bar{a} \nsim_\alpha \bar{a}'$, then by Lemma \ref{lem:b-f-leq-same} $\bar{a} \nleq_\alpha \bar{a}'$. So there is no automorphism taking $\bar{a}$ to $\bar{a}'$.

On the other hand, suppose that for all $\alpha \in \wf(L)$, $\bar{a} \sim_\alpha \bar{a}'$. We have three cases.

\begin{case}
$L$ is well-founded and has a maximal element.
\end{case}

Let $\alpha$ be the maximal element of $L$. We claim that the set of finite partial maps
\[ \{ \bar{a} \mapsto \bar{a}' : \bar{a} \sim_\alpha \bar{a}' \} \]
has the back-and-forth property. It suffices to assume that $\bar{a}$ and $\bar{a}'$ are closed under predecessors. It also suffices to check the back-and-froth property for adding an element $b$ which is a child of one of the $a_i$. Since $\alpha$ is the maximal element of $L$, $x \sim_\alpha y$ if and only if $E(x,y) = \varrho(x)$ for each $i$.

Let $\bar{a} = (a_1,\ldots,a_t)$ and $\bar{a}'=(a_1',\ldots,a_t')$ be such that $\bar{a} \sim_\alpha \bar{a}'$. Note that $\bar{a} \equiv_{\at} \bar{a}'$ and for each $i$, $E(a_i,a_i') = (\varrho(a_i),0)$. Given $b$ a child of $a_i$, by Lemma \ref{lem:extension} (with $\beta = \alpha$ and $m = 0$) there is $b'$ such that $\bar{a},b \equiv_{\at} \bar{a}',b'$ and $E(b,b') = (\varrho(b),0)$. Hence $\bar{a},b \sim_\alpha \bar{a}',b'$.

\begin{case}
$L$ is well-founded and has no maximal element.
\end{case}

We claim that the set of finite partial maps
\[ \{ \bar{a} \mapsto \bar{a}' : \bar{a} \sim_\alpha \bar{a}' \text{ for all $\alpha \in L$} \} \]
has the back-and-forth property. It suffices to assume that $\bar{a}$ and $\bar{a}'$ are closed under predecessors. It also suffices to check the back-and-forth property for adding an element $b$ which is a child of one of the $a_i$.

Let $\bar{a} = (a_1,\ldots,a_t)$ and $\bar{a}'=(a_1',\ldots,a_t')$ be such that $\bar{a} \sim_\alpha \bar{a}'$ for all $\alpha \in L$. Note that $\bar{a} \equiv_{\at} \bar{a}'$ and for each $i$, $E(a_i,a_i') = (\varrho(a_i),0)$. Given $b$ a child of $a_i$, let $\beta$ be such that $\beta > \varrho(b),\varrho(a_1),\ldots,\varrho(a_t)$. Then by Lemma \ref{lem:extension} (with this $\beta$ and $m = 0$), there is $b'$ such that $\bar{a},b \equiv_{\at} \bar{a}',b'$ and $E(b,b') = (\varrho(b),0)$, and hence $\bar{a},b \sim_\alpha \bar{a}',b'$ for all $\alpha \in L$.

\begin{case}
$L$ is not well-founded.
\end{case}

Let $\bar{a} = (a_1,\ldots,a_t)$ and $\bar{a}'=(a_1',\ldots,a_t')$ be such that $\bar{a} \sim_\alpha \bar{a}'$ for all $\alpha \in L$. We claim that there is $\alpha \notin \wf(L)$ such that $\bar{a} \sim_\alpha \bar{a}'$. Then, by Lemmas \ref{lem:are-ns-bf} and \ref{lem:non-standard-bf}, we would get an automorphism of $\mc{M}$ taking $\bar{a}$ to $\bar{a}'$.

We claim that for each $i$, either $\varrho(a_i) \in \wf(L)$ or $E(a_i,a_i') \notin \wf(L)$. This is enough to get $\bar{a} \sim_\alpha \bar{a}'$ for some $\alpha \notin \wf(L)$. If $\varrho(a_i) \notin \wf(L)$, then since $a_i \sim_\alpha a_i'$ for all $\alpha \in \wf(L)$, $E(a_i,a_i') \geq \alpha$ for all $\alpha \in \wf(L)$. By \ref{O2}, $E(a_i,a_i') \notin \wf(L)$.
\end{proof}

\begin{lemma}\label{lem:def-sim}
Given $\bar{x}$ a tuple in $\mc{M}$ and $\alpha \in \wf(L)$, there is a $\Pi^{\infi}_{\alpha}$ formula which defines the set of $\bar{y}$ with $\bar{x} \sim_\alpha \bar{y}$.
\end{lemma}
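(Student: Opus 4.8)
The plan is to reduce the statement to the classical definability of the standard back-and-forth relations on a countable structure, using the machinery already set up in this section.

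First I would treat $\alpha = 0$ directly. Here $\bar{x} \sim_0 \bar{y}$ just says that $\bar{x}$ and $\bar{y}$ have the same atomic type, and---as observed in the proof of Lemma~\ref{lem:are-ns-bf}---the atomic type of a finite tuple of $\mc{M}$ is determined by finitely many atomic facts: the tree relations $\preceq$ and $P$ among its entries, the values of $\varrho$ and $\varepsilon$ on them (which lie among the named constants from $L$ and in $\omega$), the values of $E$ on pairs of entries, and which of the unary relations $A_i$ holds. Hence $\{\bar{y} : \bar{x} \sim_0 \bar{y}\}$ is defined by a finite quantifier-free, thus $\Pi^{\infi}_0$, formula.

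For $0 < \alpha \in \wf(L)$, the key step is the identity
\[ \{\bar{y} : \bar{x} \sim_\alpha \bar{y}\} = \{\bar{y} : \bar{x} \leq_\alpha \bar{y}\}, \]
which is immediate from Corollary~\ref{lem:b-f-leq-same}. It then suffices to invoke the classical fact (see \cite[Section 6.7]{AshKnight00}) that, in any countable structure and for every ordinal $\alpha \geq 1$, the set $\{\bar{y} : \bar{x} \leq_\alpha \bar{y}\}$ is defined by a $\Pi^{\infi}_\alpha$ formula. This is proved by a simultaneous recursion on $\alpha$ that builds, for each tuple $\bar{a}$, a $\Pi^{\infi}_\alpha$ formula defining $\{\bar{y} : \bar{a} \leq_\alpha \bar{y}\}$ together with a $\Sigma^{\infi}_\alpha$ formula defining $\{\bar{y} : \bar{y} \leq_\alpha \bar{a}\}$, by unwinding Definition~\ref{def:bf} and using the countability of $\mc{M}$ so that the disjunctions and conjunctions over auxiliary tuples that appear remain countable; taking $\bar{a} = \bar{x}$ finishes the proof. (Since $\sim_\alpha$ also coincides with $\geq_\alpha$ on $\wf(L)$, the set is moreover $\Sigma^{\infi}_\alpha$-definable, but only the $\Pi^{\infi}_\alpha$ bound is needed.)

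The content genuinely special to this paper lies in the reduction of the previous paragraph, which rests on Corollary~\ref{lem:b-f-leq-same} (hence on Lemmas~\ref{lem:are-ns-bf} and~\ref{lem:extension}); the recursion establishing definability of the standard back-and-forth relations is classical and structure-independent, so I would cite it rather than reproduce it. The only subtle point in that recursion is bookkeeping the quantifier complexity---making sure each stage lands in exactly $\Pi^{\infi}_\alpha$ (resp. $\Sigma^{\infi}_\alpha$) rather than one level too high---together with the low-level mismatch between $\leq_0$ (which counts only bounded G\"odel numbers) and the atomic relation $\sim_0$, which is exactly why the case $\alpha = 0$ is split off and handled by hand.
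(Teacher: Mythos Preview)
Your proposal is correct and takes essentially the same approach as the paper: both reduce to Corollary~\ref{lem:b-f-leq-same} to identify $\sim_\alpha$ with the standard relation $\leq_\alpha$ for $\alpha>0$, and then invoke the classical $\Pi^{\infi}_\alpha$-definability of $\{\bar y:\bar x\leq_\alpha\bar y\}$ from \cite{AshKnight00}. The only cosmetic difference is that the paper writes the formula explicitly as $\psi=\bigdoublewedge_{\bar x\nsim_\alpha\bar y}\neg\varphi_{\bar y}$, choosing one separating $\Sigma^{\infi}_\alpha$ formula per $\bar y$, while you cite the recursive construction of the back-and-forth formulas directly; your separate treatment of $\alpha=0$ is in fact more careful than the paper's, since Corollary~\ref{lem:b-f-leq-same} is stated only for $\alpha>0$.
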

\begin{proof}
Let $\bar{y}$ be such that $\bar{x} \nsim_\alpha \bar{y}$. By Lemma \ref{lem:b-f-leq-same}, $\bar{x} \nleq_{\alpha} \bar{y}$. Proposition 15.1 of \cite{AshKnight00} says that $\bar{x} \leq_{\alpha} \bar{y}$ if and only if every $\Sigma_\alpha^{\infi}$ formula true of $\bar{y}$ is true of $\bar{x}$. So there is a $\Sigma^{\infi}_\alpha$ formula $\varphi_{\bar{y}}$ true of $\bar{y}$ but not of $\bar{x}$. Let
\[ \psi = \bigdoublewedge_{\bar{x} \nsim_\alpha \bar{y}} \neg \varphi_{\bar{y}}.\]
Note that $\psi$ is a $\Pi^{\infi}_\alpha$ formula. If $\mc{M} \models \neg\psi(\bar{z})$ then there is $\bar{y}$ such that $\mc{M} \models \varphi_{\bar{y}}(\bar{z})$ and so $\bar{x} \nleq_{\alpha} \bar{z}$ (and hence $\bar{x} \nsim_{\alpha} \bar{z}$). On the other hand, if $\bar{x} \nsim_{\alpha} \bar{z}$, then $\mc{M} \models \varphi_{\bar{z}}(\bar{z})$ and so $\mc{M} \models \neg \psi(\bar{z})$.
\end{proof}

\subsection{Computation of Scott Rank for Theorem \ref{thm:construction}}

Recall that for Theorem \ref{thm:construction}, we add to $S^+$:
\begin{enumerate}[label=(O\arabic*)]
	\item[(O4a)] $R_n = L$ for all $n$.
\end{enumerate}
So \ref{Q6} is a vacuous condition. The following lemma completes the proof of Theorem \ref{thm:construction}:

\begin{lemma}\label{lem:sr-comp}
Let $\mc{M} \models T(L)$. Then $SR(\mc{M}) = \wfc(L)$.
\end{lemma}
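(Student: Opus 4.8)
The goal is to pin down $SR(\mc{M}) = \wfc(L)$ exactly, which means two inequalities: $SR(\mc{M}) \leq \wfc(L)$ and $SR(\mc{M}) \geq \wfc(L)$. Recall $SR(\mc{M})$ is the least $\alpha$ such that $\mc{M}$ has a $\Pi^{\infi}_{\alpha+1}$ Scott sentence, equivalently (by Montalb\'an's theorem, part \eqref{part:free}) the least $\alpha$ with no $\alpha$-free tuple. The whole apparatus built so far — Lemma \ref{lem:are-ns-bf} (the $\sim_\alpha$ are non-standard back-and-forth relations), Lemma \ref{lem:aut-orbit} (automorphism orbits are exactly the intersection of the $\sim_\alpha$ over $\alpha \in \wf(L)$), and Lemma \ref{lem:def-sim} ($\sim_\alpha$-classes are $\Pi^{\infi}_\alpha$-definable for $\alpha \in \wf(L)$) — is precisely what is needed. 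Split into the two cases $L$ well-founded (so $\wfc(L) = \wf(L) = \mathrm{otp}(L) =: \lambda$) and $L$ not well-founded (so $\wfc(L) = \wf(L) + 1$).

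First, the upper bound. In the well-founded case, $L = \lambda$ and every element is below the (possibly nonexistent) top; by Lemma \ref{lem:aut-orbit}, two tuples are in the same orbit iff $\bar{a} \sim_\alpha \bar{b}$ for all $\alpha < \lambda$. When $\lambda$ is a successor $\mu+1$ this is just $\bar{a} \sim_\mu \bar{b}$, which by Lemma \ref{lem:def-sim} is $\Pi^{\infi}_\mu$-definable, so every orbit is $\Pi^{\infi}_\mu$ hence $\Sigma^{\infi}_\mu$-definable is too strong — rather one gets a $\Pi^{\infi}_{\mu+1}$ Scott sentence via Montalb\'an's theorem condition that every orbit is $\Sigma^{\infi}_\mu$-definable; so I should be careful and instead argue directly that no tuple is $\lambda$-free (and that some tuple is $\mu$-free when $\lambda = \mu+1$, for the lower bound). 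The cleanest route: show (a) no tuple of $\mc{M}$ is $\wfc(L)$-free, giving $SR(\mc{M}) \leq \wfc(L)$, and (b) exhibit, for each $\beta < \wfc(L)$, a $\beta$-free tuple, giving $SR(\mc{M}) \geq \wfc(L)$. For (a), if $\bar a$ were $\wfc(L)$-free then for each $\bar b$ and each $\beta < \wfc(L)$ there are $\bar a', \bar b'$ with $\bar a\bar b \leq_\beta \bar a'\bar b'$ but $\bar a' \nleq_{\wfc(L)} \bar a$; in the well-founded case $\wfc(L) = \lambda$ is a limit or successor and one uses Corollary \ref{lem:b-f-leq-same} to convert $\nleq$ to $\nsim$ and then derive a contradiction with Lemma \ref{lem:aut-orbit} together with the extension Lemma \ref{lem:extension}, which lets you always realize a prescribed $\sim_\beta$-approximation. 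In the non-well-founded case $\wfc(L) = \wf(L)+1$, and the point is that $\sim_{\wf(L)+1}$ doesn't make sense as a standard relation, but any two tuples that are $\sim_\alpha$ for all $\alpha \in \wf(L)$ are automorphic (Lemma \ref{lem:aut-orbit}, Case 3) — so the Scott rank is $\wf(L)+1$, not $\wf(L)$, because $\leq_\alpha$-equivalence for a fixed standard $\alpha \leq \wf(L)$ does \emph{not} suffice.

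Second, the lower bound (the harder direction, and the main obstacle). I must produce $\beta$-free tuples for every $\beta < \wfc(L)$. The construction of $\mc{M}$ (the trees with labels $\varrho \in L$ decreasing along branches, the $E$-function, and the relations $A_i$) was designed exactly so that deep enough nodes witness non-triviality of the back-and-forth relations at each level below $\wf(L)$. Concretely, given $\beta < \wf(L)$, I would take a node $x$ with $\varrho(x) > \beta$ (exists because $R_1 = L$ is unbounded, via \ref{R1}/\ref{O4a}) and whose parent/ancestors have large enough $\varrho$, and show the singleton $x$ (closed under predecessors) is $\beta$-free: given any $\bar b$ and any $\gamma < \beta$, the amalgamation/extension machinery (Lemma \ref{lem:extension}) produces a sibling-type copy $x'$ with $E_L(x,x') = \gamma$-ish but $E_L(x,x') < \beta$, so $x' \nsim_\beta x$ hence $x' \nleq_\beta x$ by Corollary \ref{lem:b-f-leq-same}, while $x\bar b \leq_\gamma x'\bar b'$ for suitable $\bar b'$ again by Lemma \ref{lem:extension}. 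This shows $SR(\mc{M}) > \beta$ for all $\beta < \wf(L)$, hence $SR(\mc{M}) \geq \wf(L)$. To get the extra $+1$ when $L$ is not well-founded, I would show $\wf(L)$ itself is achieved as a lower bound witness: pick a node $x$ with $\varrho(x) \notin \wf(L)$ (exists since $L$ is non-well-founded with the $R_n$ trivial), and argue $x$ is $\wf(L)$-free — for any $\bar b$ and $\beta < \wf(L)$ one finds $x'$ with $E(x,x')$ equal to some non-well-founded or large-but-$<\wf(L)$ value so that $x' \nleq_{\wf(L)} x$ yet $x\bar b \leq_\beta x'\bar b'$; the key is that no single standard $\beta \leq \wf(L)$ can detect the automorphism orbit, because orbits are cut out only by the \emph{whole} family $(\sim_\alpha)_{\alpha < \wf(L)}$ (Lemma \ref{lem:aut-orbit}). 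Finally, combine: $SR(\mc{M}) \geq \wf(L)$ always, $SR(\mc{M}) \geq \wf(L)+1 = \wfc(L)$ when $L$ is ill-founded, and $SR(\mc{M}) \geq \lambda = \wfc(L)$ when $L = \lambda$ is well-founded (using a $\beta$-free tuple for each $\beta < \lambda$, which in the successor case $\lambda = \mu+1$ in particular gives a $\mu$-free tuple); together with the upper bound this yields $SR(\mc{M}) = \wfc(L)$. The delicate point throughout — and where I expect to spend the most care — is correctly tracking the difference between $\sim_\alpha$ (symmetric, with $\sim_0$ = full atomic type) and $\leq_\alpha$ (the official back-and-forth relation with bounded G\"odel numbers at level $0$), invoking Corollary \ref{lem:b-f-leq-same} only for $\alpha \in \wf(L)$ with $\alpha > 0$, and handling the $\alpha = 0$ and limit-of-$\wf(L)$ boundary behavior by hand as in the three cases of the proof of Lemma \ref{lem:are-ns-bf}.
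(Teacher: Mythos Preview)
Your proposal is correct and follows essentially the same route as the paper. The only cosmetic difference is that you phrase both bounds via condition \eqref{part:free} of Theorem~\ref{thm:montalban} (existence of $\alpha$-free tuples), whereas the paper phrases them via condition (2) (definability of automorphism orbits by $\Sigma^{\infi}_\alpha$ formulas); the witnesses you propose --- a child $x$ of $\langle\rangle$ with $\varrho(x)=\alpha$ for the lower bound, and the combination of Lemmas~\ref{lem:aut-orbit} and~\ref{lem:def-sim} for the upper bound --- and the key use of Lemma~\ref{lem:extension} to produce the companion $x'$ with $(\beta,m)<E(x,x')<(\alpha,0)$ are exactly what the paper does.
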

\begin{proof}
Recall Theorem \ref{thm:montalban}, which says that the Scott rank of $\mc{M}$ is the least $\alpha$ such that every automorphism orbit is $\Sigma^{\infi}_\alpha$-definable without parameters. We have two cases.

\begin{case}
$L$ is well-founded.
\end{case}

Let $\bar{x} = (x_1,\ldots,x_n)$ be a tuple in $\mc{M}$. Let $\alpha \in L$ be such that $\alpha \geq \varrho(x_1),\ldots,\varrho(x_n)$. Then for $\gamma \geq \alpha$ and $\bar{y} \in \mc{M}$, $\bar{x} \sim_{\gamma} \bar{y}$ if and only if $\bar{x} \sim_{\alpha} \bar{y}$. So, by Lemma \ref{lem:aut-orbit}, $\bar{x} \sim_{\alpha} \bar{y}$ if and only if $\bar{x}$ and $\bar{y}$ are in the same automorphism orbit. By Lemma \ref{lem:def-sim}, the orbit of $\bar{x}$ is $\Pi^{\infi}_\alpha$-definable. Thus the orbits of all of the tuples $\bar{x}$ from $\mc{M}$ are $\Sigma^{\infi}_{\wf(L)}$-definable.

Let $\alpha \in L$, $\alpha > 0$. By Lemma \ref{lem:third-fraisse} there is $x \in \mc{M}$ a successor of $\la \ra$ with $\varrho(x) = \alpha$. We claim that the automorphism orbit of $x$ is not definable by a $\Sigma^{\infi}_\alpha$ formula. Suppose to the contrary that it was, say by a formula $\varphi$. Let $\bar{y} = (y_1,\ldots,y_s)$ be a tuple in $\mc{M}$ and $\psi$ a $\Pi^{\infi}_\beta$ formula for some $\beta < \alpha$ which witness that $\mc{M} \models \varphi(x)$. Let $m \in \omega$ be such that $m > \varepsilon(y_1),\ldots,\varepsilon(y_s)$. Using the construction of $\mc{M}$ as a Fra\"{\i}ss\'e limit, there is $x' \in \mc{M}$ such that $(\alpha,0) > E(x,x') > (\beta,m)$. So $x \nsim_\alpha x'$. By Lemma \ref{lem:extension} with these values of $\beta$ and $m$, there is a tuple $\bar{y}'=(y_1',\ldots,y_s')$ such that $x,\bar{y} \sim_\beta x',\bar{y}'$. By Lemma \ref{lem:b-f-leq-same}, $x,\bar{y} \leq_\beta x',\bar{y}'$. Hence $\mc{M} \models \varphi(x')$. So $x$ and $x'$ are in the same automorphism orbit; but then Lemma \ref{lem:aut-orbit} contradicts the fact that $x \nsim_\alpha x'$.

So the automorphism orbits of $\mc{M}$ are definable by $\Sigma^{\infi}_{\wf(L)}$ formulas, but there is no $\alpha < \wf(L)$ such that all of the automorphism orbits are definable by $\Sigma^{\infi}_{\alpha}$ formulas. So $SR(\mc{M}) = \wf(L) = \wfc(L)$ since $L$ is well-founded.

\begin{case}
$L$ is not well-founded.
\end{case}

Fix a tuple $\bar{x}$. By Lemma \ref{lem:aut-orbit}, $\bar{y}$ is in the automorphism orbit of $\bar{x}$ if and only if $\bar{x} \sim_\alpha \bar{y}$ for each $\alpha \in \wf(L)$. By Lemma \ref{lem:def-sim}, the set of such $\bar{y}$ is $\Pi^{\infi}_\alpha$-definable for each fixed $\alpha$. So the set of $\bar{y}$ for which $\bar{x} \sim_\alpha \bar{y}$ for all $\alpha \in \wf(L)$ is $\Pi^{\infi}_{\wf(L)}$-definable, and therefore $\Sigma^{\infi}_{\wf(L) + 1}$-definable.

By Lemma \ref{lem:third-fraisse} there is $x \in \mc{M}$ a successor of $\la \ra$ with $\varrho(x) = \alpha \notin \wf(L)$. The argument from the previous case shows that the automorphism orbit of $x$ is not definable by a $\Sigma^{\infi}_{\beta}$ formula for any $\beta \in \wf(L)$. Hence it is not definable by a $\Sigma^{\infi}_{\wf(L)}$ formula since $\wf(L)$ is a limit ordinal by \ref{O2}.

We have shown that the automorphism orbits of $\mc{M}$ are all definable by $\Sigma^{\infi}_{\wf(L) +1}$ formulas, but that not all of the automorphism orbits are definable by $\Sigma^{\infi}_{\wf(L)}$ formulas. Since $L$ is not well-founded, $\wf(L) + 1 = \wfc(L)$. So $SR(\mc{M}) = \wfc(L)$. 
\end{proof}

\subsection{Computation of Scott Rank for Theorem \ref{thm:construction2}}\label{subsec:omegaCK}

For Theorem \ref{thm:construction2}, we want to have: If $\mc{M} \models T(L)$, then $SR(\mc{M}) = \wf(L)$ (rather than $\wfc(L)$). We accomplish this by adding to $S^+$:
\begin{enumerate}[label=(O4b)]
	\item\label{O4b} There is a function $G \colon L \to \mc{P}(L)$ such that for each $\alpha \in L$, $G(\alpha)$ is an increasing sequence of order type $\omega$ whose limit is $\alpha$ if $\alpha$ is a limit ordinal, or a finite set containing $\alpha - 1$ if $\alpha$ is a successor ordinal. $R_1$ is an increasing sequence with order type $\omega$ which is unbounded in $L$, or $R_1$ is $\{\gamma\}$ if $\gamma$ is the maximal element of $L$. For each $n$,
\[ R_{n+1} = \{ \beta_n \} \cup R_n \cup \bigcup_{\alpha \in R_n} G(\alpha) \]
where $\beta_n$ is an element of $L$. If $\gamma < \alpha < \beta$, and $\gamma \in G(\beta)$, then $\gamma \in G(\alpha)$.
\end{enumerate}
In this case, \ref{Q6} is no longer a vacuous condition. Recall that while \ref{O4b} is not $\Pi^{\infi}_2$, this does not matter as we can take its Morleyization.

\ref{O4b} plays a similar role to the ``thin trees'' in Knight and Millar's \cite{KnightMillar10} construction of a computable structure of Scott rank $\omega_1^{CK}$; the ordinals in $R_n$ put a bound, at level $n$ of the tree, on the Scott ranks of the elements at that level. In Lemma \ref{lem:prop-of-Rn} below, we will see that for each $n$ there is a bound, below $\wf(L)$, on the ordinals in $R_n \cap \wf(L)$.

We begin by showing that \ref{O4b} implies \ref{O3}. \ref{R1}, \ref{R2}, \ref{R3}, and \ref{R4} follow immediately from \ref{O4b}. To see \ref{R5}, we show in the following lemma that the $R_n$ are well-founded.

\begin{lemma}\label{lem:prop-of-Rn}
Suppose that $G$ and $R_n$ satisfy \ref{O4b} and that $L$ is not well-founded. For each $n$, $R_n$ is well-founded and there is a bound $\alpha_n \in \wf(L)$ on $R_n \cap \wf(L)$.
\end{lemma}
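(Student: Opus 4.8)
I would prove both assertions by simultaneous induction on $n$, leaning on two features of the function $G$ supplied by \ref{O4b}: that $G(\alpha)$ consists of elements $\le\alpha$ (indeed cofinal in $\alpha$ when $\alpha$ is a limit), and the coherence clause that $\gamma\in G(\beta)$ together with $\gamma<\alpha<\beta$ forces $\gamma\in G(\alpha)$. I would also use the elementary fact that $L\setminus\wf(L)$ has no least element, so that every non-standard element of $L$ has non-standard elements below it; otherwise a least such element could be adjoined to $\wf(L)$ to give a strictly larger well-founded initial segment.

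The base case $n=1$ is quick. If $L$ has a maximum $\gamma$ then $R_1=\{\gamma\}$ is finite and $\gamma\notin\wf(L)$ (as $L$ is not well-founded), so $R_1\cap\wf(L)=\emptyset$. Otherwise $R_1$ has order type $\omega$, hence is well-founded, and picking any $x\in L\setminus\wf(L)$, the fact that $R_1$ is not strictly bounded (\ref{R1}) puts some element of $R_1$, and therefore all later ones, outside $\wf(L)$; so $R_1\cap\wf(L)$ is a finite initial segment of $R_1$, bounded by some $\alpha_1\in\wf(L)$.

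For the inductive step, suppose $R_n$ is well-founded with $R_n\cap\wf(L)$ bounded by $\alpha_n\in\wf(L)$; recall $R_{n+1}=\{\beta_n\}\cup R_n\cup\bigcup_{\alpha\in R_n}G(\alpha)$. First I would establish that $G(\alpha)\cap\wf(L)$ is \emph{finite} for every $\alpha\in L\setminus\wf(L)$: if $\alpha$ is a successor, $G(\alpha)$ is finite outright; if $\alpha$ is a limit, choose a non-standard $\beta_0<\alpha$, and since $G(\alpha)$ is cofinal in $\alpha$ some term of the increasing sequence $G(\alpha)$ is $\ge\beta_0$, hence outside $\wf(L)$, leaving only finitely many terms inside. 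Next I would show $\bigcup_{\alpha\in R_n}G(\alpha)$ is well-founded: given a putative descending sequence $\delta_0>\delta_1>\cdots$ with $\delta_k\in G(\alpha_k)$, $\alpha_k\in R_n$, let $\alpha_\ast=\min_k\alpha_k$ (which exists as $R_n$ is well-founded), attained at the least index $k_\ast$; since $\delta_{k_\ast}\le\alpha_{k_\ast}=\alpha_\ast$, every later $\delta_k$ has $\delta_k<\alpha_\ast\le\alpha_k$, so coherence puts $\delta_k\in G(\alpha_\ast)$, and together with $\delta_{k_\ast}$ this is an infinite descending sequence inside the well-ordered set $G(\alpha_\ast)$, which is impossible. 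Granting these, $R_{n+1}$ is a union of three well-founded subsets of the linear order $L$ (namely $\{\beta_n\}$, $R_n$, and $\bigcup_{\alpha}G(\alpha)$), hence well-founded, since a descending sequence in a finite union of well-founded subsets of a linear order has, by pigeonhole, an infinite subsequence in one of them. For the bound: $R_n$, and each $G(\alpha)$ with $\alpha\in R_n\cap\wf(L)$, stays inside the initial segment below $\alpha_n$; for $\alpha\in R_n\setminus\wf(L)$, applying coherence with $\alpha_\ast:=\min(R_n\setminus\wf(L))$ (legitimate since $\gamma<\alpha_\ast\le\alpha$ for $\gamma\in G(\alpha)\cap\wf(L)$) gives $\bigcup_{\alpha\in R_n\setminus\wf(L)}(G(\alpha)\cap\wf(L))\subseteq G(\alpha_\ast)\cap\wf(L)$, which is finite by the first step; and $\{\beta_n\}$ contributes at most one ordinal. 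Taking $\alpha_{n+1}$ to dominate these finitely many ordinals below $\wf(L)$ gives the required bound.

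The one point I would flag as needing care is that the coherence clause of \ref{O4b} is doing genuine work and is invoked twice: once to keep $\bigcup_{\alpha\in R_n}G(\alpha)$ well-founded (a union of well-founded subsets of the non-well-founded order $L$ need not be well-founded once $R_n$ reaches into $L\setminus\wf(L)$), and once to stop the well-founded part of that union from being cofinal in $\wf(L)$. In both places the argument should be anchored to the least, necessarily non-standard, element of $R_n$; past that, everything is bookkeeping.
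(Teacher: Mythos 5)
Your proof is correct and follows essentially the same route as the paper's: a simultaneous induction in which the coherence clause of (O4b) is used once to funnel any descending sequence in $\bigcup_{\alpha\in R_n}G(\alpha)$ into a single $G(\alpha_\ast)$, and once to show that the standard part of that union is contained in $G$ of the least non-standard element of $R_n$. Your explicit justification that $G(\gamma)\cap\wf(L)$ is finite for non-standard $\gamma$ (via the observation that $L\setminus\wf(L)$ has no least element) spells out a step the paper only asserts.
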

\begin{proof}
We argue by simultaneous induction that:
\begin{enumerate}
	\item for each $n$, $R_n$ is well-founded,
	\item for each $n$ there is a bound $\alpha_n \in \wf(L)$ on $R_n \cap \wf(L)$.
\end{enumerate}

For $n = 1$, $R_1$ has order type at most $\omega$ and is unbounded in $L$. Thus $R_1$ is well-founded, and has only finitely many elements in $\wf(L)$.

For the case $n+1$, we have that
\[ R_{n+1} = \{ \beta_n \} \cup R_n \cup \bigcup_{\alpha \in R_n} G(\alpha). \]
We claim that $R_{n+1}$ is well-ordered. It suffices to show that $\bigcup_{\alpha \in R_n} G(\alpha)$ is well-ordered. Suppose that $(\beta_n)_{n \in \omega}$ is a decreasing sequence in $\bigcup_{\alpha \in R_n} G(\alpha)$. For each $n$, let $\alpha_n$ be the least $\alpha \in R_n$ such that $\beta_n \in G(\alpha_n)$. We claim that $(\alpha_n)_{n \in \omega}$ is a non-increasing sequence. If not, then for some $n$, $\alpha_{n+1} > \alpha_n$. We have $\alpha_{n+1} > \alpha_n \geq \beta_n \geq \beta_{n+1}$ and $\beta_{n+1} \in G(\alpha_{n+1})$, so by \ref{O4b} we have $\beta_{n+1} \in G(\alpha_n)$. This contradicts the choice of $\alpha_{n+1}$. Since $R_n$ is well-founded, there is $\alpha$ such that for all sufficiently large $n$, $\alpha_n = \alpha$. Thus, throwing away some initial part of the sequence $(\beta_n)_{n \in \omega}$, we may assume that each $\beta_n \in G(\alpha)$. This is a contradiction, since $G(\alpha)$ has order type at most $\omega$.

Now we claim that there is a bound $\alpha_{n+1} \in \wf(L)$ on $R_{n+1} \cap \wf(L)$. Let $\gamma$ be the least element of $R_{n}$ which is not in $\wf(L)$. Then we claim that
\[ R_{n+1} \cap \wf(L) = \{ \beta_{n+1} \} \cup [R_n \cap \wf(L)] \cup [G(\gamma) \cap \wf(L)] \cup \bigcup_{\alpha \in R_n \cap \wf(L)} G(\alpha). \]
(If $\beta_{n+1} \notin \wfc(L)$, then we omit it.) Clearly the right hand side is contained in the left hand side. To see that we have equality, suppose that $\alpha \in \wf(L) \cap G(\delta)$ for some $\delta \in R_n \setminus \wf(L)$. Then $\alpha < \gamma \leq \delta$, and so $\alpha \in G(\gamma)$ by \ref{O4b}.

Then since $G(\gamma)$ has order type at most $\omega$ and $\gamma \notin \wf(L)$, $G(\gamma) \cap \wf(L)$ is finite and hence bounded. Also, each element of $G(\alpha)$ for $\alpha \in R_n \cap L$ is bounded above by $\alpha$, and since $R_n \cap L$ is bounded above by $\alpha_n$, $\bigcup_{\alpha \in R_n \cap \wf(L)} G(\alpha)$ is also bounded above by $\alpha_n$. Thus $R_{n+1} \cap \wf(L)$ is bounded above by some $\alpha_{n+1} \in \wf(L)$.
\end{proof}

We also need to know that for each $L \in \mc{S}$, there are $G$ and $R_n$ satisfying \ref{O4b}. Moreover, if $L$ is computable with a computable successor relation, then we need to find $G$ and $R_n$ computable in order to have a computable model of $T(L)$.

\begin{lemma}
There are $G$ and $R_1$ satisfying \ref{O4b} such that $L = \bigcup_n R_n$. If $L$ is computable with a computable successor relation, then we can pick $G$, $(\beta_n)_{n \in \omega}$, and $R_1$ such that the $R_n$ are uniformly computable.
\end{lemma}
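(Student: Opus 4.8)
The plan is to build $G$ directly from a carefully chosen family of cofinal $\omega$-sequences attached to the limit elements of $L$, using a fixed enumeration $L = \{\ell_1,\ell_2,\dots\}$ to keep things under control. The first observation is that the last clause of \ref{O4b} (coherence) is equivalent to saying that every $G(\alpha)$ must contain $G(\beta)\cap\{\gamma:\gamma<\alpha\}$ for all $\beta>\alpha$, so the only possible ``sources'' of elements are immediate predecessors $\alpha-1$ (for successor $\alpha$) and the sequences attached to limits. Accordingly, for each \emph{limit} element $\beta$ of $L$ — meaning $\beta\ne 0$ with no immediate predecessor — I would fix an increasing cofinal $\omega$-sequence $S(\beta)\subseteq\{\gamma:\gamma<\beta\}$, set $E(\alpha)=\{\alpha-1\}$ when $\alpha$ has an immediate predecessor and $E(\alpha)=\emptyset$ otherwise, and then \emph{define}
\[ G(\alpha)\;=\;E(\alpha)\;\cup\;\{\gamma\in L:\gamma<\alpha\text{ and }\gamma\in S(\beta)\text{ for some limit }\beta\geq\alpha\}, \]
with $G(0)=\emptyset$. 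Coherence is then immediate from the definition, $G(\alpha)\subseteq\{\gamma:\gamma<\alpha\}$ automatically, and $\sup G(\alpha)=\alpha$ when $\alpha$ is a limit since $S(\alpha)\subseteq G(\alpha)$. The only nontrivial facts left to check are that $G(\alpha)$ is finite when $\alpha$ is a successor and has order type \emph{exactly} $\omega$ when $\alpha$ is a limit.

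This is the point where $S(\beta)$ must be chosen with care: the ladders $S(\beta)$ for limits $\beta>\alpha$ all dump their elements below $\alpha$ into $G(\alpha)$, so I need that for every $\alpha$ only finitely many of these ladders reach below $\alpha$ at all (each then contributing only finitely many elements automatically, since $S(\beta)$ converges to $\beta>\alpha$). To arrange this I would process the limit elements in enumeration order: when $\ell_k$ is a limit, let $M_k$ be the largest of $\ell_1,\dots,\ell_{k-1}$ that lies below $\ell_k$ (omitted if there is no such element), and choose $S(\ell_k)$ to be an increasing cofinal $\omega$-sequence inside the interval $\{\gamma:M_k<\gamma<\ell_k\}$ — such a sequence exists because that interval is nonempty, has no greatest element (else $\ell_k$ would be a successor), and is cofinal in $\ell_k$. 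With this choice, if $\ell_k$ is a limit with $\ell_k>\ell_i$ and $k>i$, then $\ell_i\leq M_k$, so $S(\ell_k)$ lies entirely above $\ell_i$ and contributes nothing to $G(\ell_i)$; hence $G(\ell_i)$ receives ladder-elements only from the finitely many limits among $\ell_1,\dots,\ell_i$. Together with $S(\alpha)\subseteq G(\alpha)$ for limit $\alpha$ and the fact that adjoining a finite subset of $\{\gamma<\alpha\}$ to an order-type-$\omega$ cofinal sequence keeps the order type at $\omega$, this gives exactly the required form of $G(\alpha)$.

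To finish, I would take $R_1=\{\max L\}$ if $L$ has a greatest element and an increasing cofinal $\omega$-sequence otherwise, and put $\beta_n=\ell_n$; then $R_{n+1}=\{\ell_n\}\cup R_n\cup\bigcup_{\alpha\in R_n}G(\alpha)$ gives $\{\ell_n:n\geq 1\}\subseteq\bigcup_n R_n$, hence $\bigcup_n R_n=L$, and all clauses of \ref{O4b} hold by the verification above (recall that Lemma~\ref{lem:prop-of-Rn} and the surrounding discussion already derive \ref{O3}, and in particular the well-foundedness of the $R_n$, from \ref{O4b}). For the effective version, when $L$ is computable with a computable successor relation one can decide whether a given element is $0$, a successor (and compute its predecessor), or a limit; the bounds $M_k$ and the cofinal $\omega$-sequences $S(\ell_k)$ can be produced uniformly computably by the usual ``greedily chase the enumeration'' construction; and membership in $G(\alpha)$, hence in each $R_n$, is uniformly decidable, because — as the argument above shows — for each $\gamma$ only the boundedly many limits $\beta$ appearing in the enumeration before $\gamma+1$ can have $\gamma\in S(\beta)$, so the relevant existential quantifier ranges over a computably bounded set. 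The lone genuine wrinkle in the effectivization is the possible greatest element of $L$, which is handled as an isolated special case.

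The main obstacle is precisely the tension highlighted in the second paragraph: coherence forces each element of a limit's ladder to propagate downward into $G(\alpha)$ for every smaller $\alpha$, which naively would blow $G(\alpha)$ up to infinite size at successor $\alpha$, and the enumeration-driven choice of the sequences $S(\beta)$ — always starting a new limit's ladder above everything previously listed below it — is exactly what reconciles this with the finiteness and order-type-$\omega$ requirements.
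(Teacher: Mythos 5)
Your non-effective construction is correct, and in substance it is the paper's own greedy construction repackaged: the paper builds the sets $G(\alpha_{n})$ one at a time, seeding each with the finitely many elements of the earlier $G(\gamma_j)$ (for $\gamma_j>\alpha_{n+1}$) lying below $\alpha_{n+1}$ and then continuing greedily above the largest previously enumerated element below $\alpha_{n+1}$ --- which is exactly your ``start each new ladder above $M_k$'' device --- whereas you define the ladders $S(\beta)$ first and take the coherence closure as the \emph{definition} of $G$. Your verifications (coherence, finiteness at successors, order type $\omega$ and cofinality at limits, $\bigcup_n R_n=L$) all go through; separating the ladders from the closure arguably makes the coherence clause more transparent than in the paper's interleaved construction.

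There is, however, a gap in the effectivization, namely the sentence ``one can decide whether a given element is $0$, a successor (and compute its predecessor), or a limit.'' A computable successor relation only makes ``$\alpha$ is a successor'' a $\Sigma^0_1$ property (search the order for a predecessor) and ``$\alpha$ is a limit'' a $\Pi^0_1$ property; nothing in the hypotheses makes this decidable, and your construction genuinely consults this predicate: you must know whether $\ell_k$ is a limit before launching the construction of $S(\ell_k)$, and you must evaluate ``$\beta$ is a limit'' inside the membership test for $G(\alpha)$. The paper's proof is arranged precisely so that this question is never asked: the greedy sequence attached to $\alpha$ is built the same way in both cases and simply halts if the immediate predecessor of $\alpha$ ever appears in the enumeration (a pointwise decidable test), so the sequence comes out finite or cofinal of type $\omega$ automatically according to which case actually holds. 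Your argument can be repaired the same way: define $S(\ell_k)$ by the greedy process for \emph{every} $\ell_k\neq 0$, stopping if the predecessor of $\ell_k$ is encountered, and drop the word ``limit'' from the definition of $G$. Then $S(\ell_k)$ is finite whenever $\ell_k$ is a successor, so it contributes only finite noise to the smaller $G(\alpha)$, all of your estimates survive, and membership in $G(\alpha)$ becomes decidable exactly by your bounded-quantifier argument.
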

\begin{proof}
Let $\alpha_0,\alpha_1,\alpha_2,\ldots$ be a listing of $L$. To define $R_1$, greedily pick an increasing subsequence of $\alpha_0,\alpha_1,\alpha_2,\ldots$ (or, if there is a maximal element $\gamma$ of $L$, let $R_1 = \{ \gamma \}$).

We begin by defining $G(\alpha_0)$, after which we define $G(\alpha_1)$, and so on. To define $G(\alpha_0)$, greedily pick an increasing subsequence of $\alpha_1,\alpha_2,\ldots$ each element of which is less than $\alpha_0$ (stopping if we ever find a predecessor of $\alpha_0$). Now suppose that we have defined $G(\alpha_0),\ldots,G(\alpha_n)$. Suppose that $\beta_1,\ldots,\beta_\ell$ are those $\alpha_i$, $0 \leq i \leq n$, with $\alpha_i < \alpha_{n+1}$. Let $\beta = \max(\beta_1,\ldots,\beta_\ell)$. Let $\gamma_1,\ldots,\gamma_m$ be those $\alpha_i$, $0 \leq i \leq n$, with $\alpha_i > \alpha_{n+1}$.  Begin by putting into $G(\alpha_{n+1})$ the finitely many elements of $G(\gamma_1),\ldots,G(\gamma_m)$ which are less than $\alpha_{n+1}$. If $\alpha_{n+1}$ is the successor of one of these elements, then we are done; if $\alpha_{n+1}$ is the successor of one of $\alpha_1,\ldots,\alpha_n$, then add that element to $G(\alpha_{n+1})$. Otherwise, greedily pick an increasing subsequence of $\alpha_{n+2},\alpha_{n+3},\ldots$, each element of which is at least $\beta$ and less than $\alpha_{n+1}$. If we ever find a predecessor of $\alpha_{n+1}$, then we can stop there. It is easy to check that $G$ is as required by \ref{O4b}. In particular, if $\gamma < \alpha < \beta$, and $\gamma \in G(\beta)$, then $\gamma \in G(\alpha)$.

Now for each $n$, let
\[ R_{n+1} = \{\alpha_n\} \cup R_n \cup \bigcup_{\alpha \in R_n} R(\alpha).\]
Note that $L = \bigcup_n R_n$.

We claim that if $\alpha_0,\alpha_1,\ldots$ was an effective listing, then each $G(\alpha_i)$ and each $R_n$ is computable. First, note that $R_1$ is computable. It is also easy to see that each $G(\alpha_n)$ is computable. Given a way of computing $R_n$, we will show that to compute $R_{n+1}$. To check whether $\alpha_i \in R_{n+1}$, first check whether $i = n$ (as we know that $\alpha_n \in R_{n+1}$). Second, check whether $\alpha_i \in R_n$. Third, check whether $\alpha_i + 1$ is in $R_n$; if it is, then $\alpha_i \in G(\alpha_i + 1) \subseteq R_{n+1}$. Fourth, check whether $\alpha_i$ is in one of $G(\alpha_0),\ldots,G(\alpha_{i-1})$. Note from the construction above that if $\alpha_i$ is not in one of these sets (and $\alpha_i$ is not in $R_{n+1}$ for one of the first three reasons), then it is not in $G(\alpha_j)$ for any $j$ with $\alpha_j \in R_n$.
\end{proof}

Finally, we show that models of $T(L)$ have the correct Scott rank.

\begin{lemma}
Let $\mc{M} \models T(L)$. Then $SR(\mc{M}) = \wfc(L)$.
\end{lemma}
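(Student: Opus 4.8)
The plan is to prove this exactly as Lemma \ref{lem:sr-comp}, since the statement is word-for-word the same; the only structural difference is that the class $\mathbb{K}$ is now built relative to an $L$ satisfying \ref{O4b} in place of \ref{O4a}, so first I would check that all of the structural machinery still applies. By Lemma \ref{lem:prop-of-Rn} the sets $R_n$ are well-founded and \ref{O4b} implies \ref{O3}, so the Fra\"{\i}ss\'e limit of $\mathbb{K}$ still exists (Lemma \ref{lem:second-fraisse}), and Lemmas \ref{lem:extension}, \ref{lem:are-ns-bf}, \ref{lem:b-f-leq-same}, \ref{lem:aut-orbit}, and \ref{lem:def-sim} hold verbatim. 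I would then invoke Montalb\'an's characterization (Theorem \ref{thm:montalban}): $SR(\mc{M})$ is the least $\alpha$ such that every automorphism orbit of $\mc{M}$ is $\Sigma^{\infi}_\alpha$-definable without parameters, and split into the two cases according to whether $L$ is well-founded.

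When $L$ is well-founded, $\wfc(L) = \wf(L)$ and the argument is identical to the well-founded case of Lemma \ref{lem:sr-comp}. For a tuple $\bar x$ I would pick $\alpha \in L$ above all $\varrho(x_i)$; by Lemma \ref{lem:aut-orbit} the orbit of $\bar x$ coincides with its $\sim_\alpha$-class, which Lemma \ref{lem:def-sim} defines by a $\Pi^{\infi}_\alpha$ formula, so every orbit is $\Sigma^{\infi}_{\wf(L)}$-definable. For the matching lower bound, for each $\alpha \in L$ with $\alpha > 0$ I would take (via Lemma \ref{lem:third-fraisse}) a successor $x$ of $\la \ra$ with $\varrho(x) = \alpha$ and use Lemma \ref{lem:extension} to produce, against any putative $\Sigma^{\infi}_\alpha$ definition, a witness $x'$ with $x \nsim_\alpha x'$ but $x \sim_\beta x'$ for the relevant $\beta < \alpha$, contradicting definability below $\alpha$. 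Hence $SR(\mc{M}) = \wf(L) = \wfc(L)$.

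When $L$ is not well-founded I would again follow Lemma \ref{lem:sr-comp}. By Lemma \ref{lem:aut-orbit} the orbit of any $\bar x$ equals $\bigcap_{\alpha \in \wf(L)} \{\bar y : \bar x \sim_\alpha \bar y\}$, an intersection of $\Pi^{\infi}_\alpha$-definable sets (Lemma \ref{lem:def-sim}) and hence $\Pi^{\infi}_{\wf(L)}$-definable, so $\Sigma^{\infi}_{\wf(L)+1}$-definable; since $\wf(L)+1 = \wfc(L)$ this yields the upper bound $SR(\mc{M}) \leq \wfc(L)$. For the lower bound I would take, via Lemma \ref{lem:third-fraisse}, a successor $x$ of $\la \ra$ with $\varrho(x) \notin \wf(L)$ and argue its orbit is not $\Sigma^{\infi}_{\wf(L)}$-definable: the extension argument of Lemma \ref{lem:sr-comp} shows it is not $\Sigma^{\infi}_\beta$-definable for any $\beta \in \wf(L)$, and since \ref{O2} forces $\wf(L)$ to be a limit ordinal this upgrades to non-$\Sigma^{\infi}_{\wf(L)}$-definability, giving $SR(\mc{M}) = \wfc(L)$.

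The main obstacle is exactly the lower bound in the non-well-founded case. To rule out a $\Sigma^{\infi}_\beta$ definition of the orbit of $x$ for a given $\beta \in \wf(L)$, I would start from a putative defining formula together with a $\Pi^{\infi}_\gamma$-witness on a finite tuple $\bar y$ (for some $\gamma < \beta$), and then use Lemma \ref{lem:extension} to produce an element $x'$ and a tuple $\bar y'$ with $x, \bar y \sim_\gamma x', \bar y'$ yet $x \nsim_\beta x'$, invoking Lemma \ref{lem:b-f-leq-same} to pass to $x,\bar y \leq_\gamma x', \bar y'$ and conclude the formula cannot separate the two orbits. The delicate point is controlling the interaction of this extension step with \ref{Q6} and the bound $\alpha_n$ on $R_n \cap \wf(L)$ from Lemma \ref{lem:prop-of-Rn}, since \ref{Q6} now restricts which $E_L$-values are available when the required witness $x'$ is built; verifying that a suitable $x'$ with $x \nsim_\beta x'$ but $x \sim_\gamma x'$ can still be placed into $\mc{M}$ is the crux on which the claimed equality $SR(\mc{M}) = \wfc(L)$ rests.
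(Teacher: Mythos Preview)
There is a genuine gap, and it stems from a typo in the paper's statement. Under \ref{O4b} the intended conclusion is $SR(\mc{M}) = \wf(L)$, not $\wfc(L)$; the paper's own proof of this lemma ends with ``So $SR(\mc{M}) = \wf(L)$,'' and this is the whole point of introducing \ref{O4b} and the restriction \ref{Q6} (see also Theorem \ref{thm:construction2}). Your plan takes the typo at face value and tries to transport the argument of Lemma \ref{lem:sr-comp} verbatim, and the very ``delicate point'' you flag at the end is precisely where that transport fails.

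Concretely, in the non-well-founded case your upper bound is too weak and your lower bound is false. For the upper bound, the paper does not stop at $\Pi^{\infi}_{\wf(L)}$: using \ref{Q6} and Lemma \ref{lem:prop-of-Rn}, for a tuple $\bar{x}$ of length $n$ the values $E_{L}(x_i,x_i')$ lying in $\wf(L)$ are bounded by the $\alpha_n$ of that lemma, so there is a single $\gamma \in \wf(L)$ with $\bar{x} \sim_\alpha \bar{y}$ for all $\alpha \in \wf(L)$ iff $\bar{x} \sim_\gamma \bar{y}$. Hence every orbit is already $\Pi^{\infi}_\gamma$-definable for some $\gamma < \wf(L)$, giving $SR(\mc{M}) \leq \wf(L)$. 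For your lower bound, taking $x$ a successor of $\la\ra$ with $\varrho(x) \notin \wf(L)$ does not work: \ref{Q6} forces $E_{L}(x,x') \in R_1 \cup \{-\infty,\varrho(x)\}$, and $R_1 \cap \wf(L)$ is finite, so the orbit of this $x$ \emph{is} $\Sigma^{\infi}_\beta$-definable for some $\beta \in \wf(L)$. The paper's lower bound instead proceeds level by level: given $\alpha \in \wf(L)$, choose $n$ with $\alpha \in R_n$ (using \ref{R3}) and take $x$ at depth $n$, so that \ref{Q6} permits the needed $E_L$-value near $\alpha$ and the Case~1 extension argument goes through for that particular $\alpha$. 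This yields, for every $\alpha \in \wf(L)$, an orbit not $\Sigma^{\infi}_\alpha$-definable, hence $SR(\mc{M}) = \wf(L)$.
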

\begin{proof}
Recall Theorem \ref{thm:montalban}, which says that the Scott rank of $\mc{M}$ is the least $\alpha$ such that every automorphism orbit is $\Sigma^{\infi}_\alpha$-definable without parameters. We have two cases.

\begin{case}
$L$ is well-founded.
\end{case}

This case is the same as the corresponding case of Lemma \ref{lem:sr-comp}.

\begin{case}
$L$ is not well-founded.
\end{case}

Fix a tuple $\bar{x}$. By Lemma \ref{lem:aut-orbit}, $\bar{y}$ is in the automorphism orbit of $\bar{x}$ if and only if $\bar{x} \sim_\alpha \bar{y}$ for each $\alpha \in \wf(L)$. By Lemma \ref{lem:prop-of-Rn}, there is a bound $\gamma \in \wf(L)$ such that $\bar{y}$ is in the same orbit as $\bar{x}$ if and only if $\bar{x} \sim_\alpha \bar{y}$ for each $\alpha \leq \gamma$. By Lemma \ref{lem:def-sim}, the set of such $\bar{y}$ is $\Pi^{\infi}_\alpha$-definable for each fixed $\alpha$. So the set of $\bar{y}$ for which $\bar{x} \sim_\alpha \bar{y}$ for all $\alpha \leq \gamma$ is $\Pi^{\infi}_{\gamma}$-definable.

Let $\alpha \in \wf(L)$ and by \ref{O1} let $n$ be such that $\alpha \in R_n$. By Lemma \ref{lem:third-fraisse} there is $x \in \mc{M}$, $|x| = n$, with $\varrho(x) = \alpha \notin \wf(L)$. The argument from the previous case shows that the automorphism orbit of $x$ is not definable by a $\Sigma^{\infi}_{\beta}$ formula for any $\beta \leq \alpha$. 

We have shown that the automorphism orbits of $\mc{M}$ are all definable by $\Sigma^{\infi}_{\wf(L)}$ formulas, but that there is no $\alpha \in \wf(L)$ such that all of the automorphism orbits are definable by $\Sigma^{\infi}_{\alpha}$ formulas. So $SR(\mc{M}) = \wf(L)$.
\end{proof}

\section{\texorpdfstring{$\Pi^0_2$ Theories}{Pi Two Theories}}\label{sec:pi-two}
Recall that Theorem \ref{thm:pi-two} stated that given $\alpha < \omega_1$, there is a $\Pi^{\infi}_2$ sentence $T$ all of whose models have Scott rank $\alpha$. This theorem is a simple application of the main construction.

\begin{proof}[Proof of Theorem \ref{thm:pi-two}]
Let $\mc{A} = (A,<_{\mc{A}})$ be a presentation of $(\alpha,<)$ as a structure with domain $A = (a_i)_{i \in \omega}$. Consider the atomic diagram of $\mc{A}$ in the language with constant symbols $(\underline{a}_i)_{i \in \omega}$ for the elements of $A$. Let $S$ be the conjunction of all of the sentences in the atomic diagram of $\mc{A}$, together with the sentence $(\forall x) \bigdoublevee_i (x = \underline{a}_i)$. Let $T$ be the $\Pi^{\infi}_2$ sentence obtained from Theorem \ref{thm:construction}. Then
\[ \scs(T) = \{ \alpha \}. \qedhere \]
\end{proof}

In the main construction, the sentence we built had uncountably many existential types. This was necessary: an omitting types argument shows that if a $\Pi^{\infi}_2$ sentence has only countably many existential types, then it must have a model of Scott rank 1.

\section{Computable Models of High Scott Rank}\label{sec:comp}
In this section, we will prove Theorem \ref{thm:comp} by producing a $\Pi^{\comp}_2$ sentence $T$ all of whose models have Scott rank $\omega_1^{CK} + 1$ (and a $\Pi^{\comp}_2$ sentence whose models all have Scott rank $\omega_1^{CK}$). Moreover $T$ will have a computable model. If $\mc{A}$ is a model of this sentence, then whenever $\mc{B}$ is another structure with $\mc{A} \equiv_2 \mc{B}$, $\mc{B}$ will also be a model of $T$ and hence will also have non-computable Scott rank. Thus it is not the case that every computable structure $\mc{A}$ of high Scott rank is approximated by models of lower Scott rank in the sense that for each $\alpha < \omega_1^{CK}$, there is a structure $\mc{B}_{\alpha}$ with $SR(\mc{B}_{\alpha}) < \omega_1^{CK}$ such that $\mc{A} \equiv_\alpha \mc{B}_{\alpha}$.

\begin{proof}[Proof of Theorem \ref{thm:comp}]
Let $\mc{H} = (H,<_{\mc{H}})$ be a computable presentation of the Harrison linear ordering of order type $\omega_1^{CK} \cdot (1 + \mathbb{Q})$ as a structure with domain $H = (h_i)_{i \in \omega}$. We may assume that the successor relation is computable by replacing each element of $\mc{H}$ by $\omega$ (this does not change the order type). Let $S$ be the conjunction of the sentences of the atomic diagram of $\mc{H}$ (in the language with constants $\underline{h}_i$) together with the sentence $(\forall x) \bigdoublevee_i (x = \underline{h}_i)$. Let $T$ be the $\Pi^{\comp}_2$ sentence obtained from Theorem \ref{thm:construction} applied to $S$. Then
\[ \{ SR(\mc{M}) : \mc{M} \models T \} = \{ \wfc(L) : L \models S \} = \{ \omega_1^{CK} + 1 \}.\]
To get Scott rank $\omega_1^{CK}$, we simply use Theorem \ref{thm:construction2} instead of Theorem \ref{thm:construction}.
\end{proof}

A similar argument also gives the following:
\begin{theorem}
Let $\alpha$ be a computable ordinal. There is a $\Pi^{\comp}_2$ sentence with a computable model whose computable models all have Scott rank $\alpha$.
\end{theorem}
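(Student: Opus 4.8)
The plan is to run the argument used for Theorems \ref{thm:pi-two} and \ref{thm:comp}, but feeding a computable presentation of $\alpha$ into the main construction (Theorem \ref{thm:construction}) in place of $\omega$ or of the Harrison order. We may assume $\alpha \geq 2$, since for $\alpha \in \{0,1\}$ one simply takes the known computable structures of Scott rank $0$ and $1$ which have $\Pi^{\comp}_1$ and $\Pi^{\comp}_2$ Scott sentences (as discussed after the overview of the main construction).

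First I would fix a computable presentation $\mc{A} = (A,<_{\mc{A}})$ of $(\alpha,<)$, with domain $A = (a_i)_{i \in \omega}$, chosen so that the successor relation on $\mc{A}$ is also computable. Every computable ordinal admits such a presentation: present $\alpha$ through Cantor normal form, so that an element has an immediate predecessor precisely when its least exponent is $0$ and its successor is obtained by incrementing that exponent — all of which is computable; alternatively one builds such a copy by a routine transfinite recursion, writing a computable limit ordinal as a computable sum $\sum_n \gamma_n$ of strictly smaller ordinals, each given inductively such a copy with a maximum element appended. (This replaces the ``replace each element by $\omega$'' trick from the proof of Theorem \ref{thm:comp}, which works for the Harrison order only because $\omega \cdot \omega_1^{CK}(1+\mathbb{Q}) = \omega_1^{CK}(1+\mathbb{Q})$, whereas it changes the order type of a general $\alpha$.)

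Next, let $S$ be the conjunction of the atomic diagram of $\mc{A}$ in the language with constant symbols $(\underline{a}_i)_{i \in \omega}$ for the elements of $A$, together with the sentence $(\forall x)\bigdoublevee_i (x = \underline{a}_i)$. Then $S$ is a computable $\mc{L}_{\omega_1 \omega}$-sentence, and — since the atomic diagram records negated atomic formulas as well — $\mc{A}$ is its only model up to isomorphism; hence the $PC_{\mc{L}_{\omega_1 \omega}}$-class $\mc{S}$ of reducts of models of $S$ to $\{\leq\}$ consists of the single isomorphism type $\alpha$. Applying Theorem \ref{thm:construction} to $S$ yields an $\mc{L}_{\omega_1 \omega}$-sentence $T$ with
\[ \scs(T) = \{\wfc(L) : L \in \mc{S}\} = \{\wfc(\alpha)\} = \{\alpha\}, \]
and, by part (1), $T$ may be taken $\Pi^{\comp}_2$ because $S$ is computable. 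In particular every (countable) model of $T$ has Scott rank $\alpha$. Finally, $\mc{A}$ is a computable model of $S$ with a computable successor relation, so part (2) of Theorem \ref{thm:construction} produces a computable model $\mc{M}$ of $T$, with $SR(\mc{M}) = \wfc(\alpha) = \alpha$. This proves the theorem.

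The substance is entirely contained in Theorem \ref{thm:construction}, which has already been established, so I expect no genuine obstacle. The only step meriting care is the first one — exhibiting a computable copy of $\alpha$ whose successor relation is computable — which is a standard fact about computable ordinals.
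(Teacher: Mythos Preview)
Your proposal is correct and follows essentially the same route as the paper: the paper's proof is a two-line remark that the argument of Theorem \ref{thm:comp} goes through unchanged once one uses the fact (cited to Ash \cite{Ash86,Ash87,AshKnight00}) that every computable ordinal has a presentation with computable successor relation. Your only deviations are cosmetic---you handle $\alpha\in\{0,1\}$ explicitly and sketch a construction of the computable-successor presentation rather than citing it---so there is nothing substantive to compare.
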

\begin{proof}
The proof is the same as that of the previous theorem, using the fact that every computable ordinal has a presentation where the successor relation is computable \cite{Ash86,Ash87,AshKnight00}.
\end{proof}

\section{A Technical Lemma}
The general construction of Section \ref{sec:gen-const} references $PC_{\mc{L}_{\omega_1 \omega}}$-classes of linear orders, whereas our classification in Theorems \ref{thm:main-result} and \ref{thm:main-result2} references $\bfSigma^1_1$ classes of linear orders. The following technical lemma shows that, if we are only interested in the order types represented in the class, the two are equivalent.

\begin{lemma}
Let $\mc{C}$ be a class of linear orders (i.e., of order types). The following are equivalent:
\begin{enumerate}
	\item $\mc{C}$ is the set of order types of a $\bfSigma_1^1$ class of linear orders.
	\item $\mc{C}$ is the set of order types of a $PC_{\mc{L}_{\omega_1 \omega}}$-class of linear orders.
\end{enumerate}
Moreover, the lightface notions are also equivalent:
\begin{enumerate}
	\item $\mc{C}$ is the set of order types of a (lightface) $\Sigma_1^1$ class of linear orders.
	\item $\mc{C}$ is the set of order types of a computable $PC_{\mc{L}_{\omega_1 \omega}}$-class of linear orders.
\end{enumerate}
\end{lemma}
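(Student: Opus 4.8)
The plan is to prove both implications of the boldface statement; the lightface statement falls out of the same proofs once effectivity is tracked. Write $\mc{L}=\{\leq\}$ throughout.

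\textbf{$(2)\Rightarrow(1)$} is routine. If $\mc{C}$ is the set of order types of the $\mc{L}$-reducts of the models of an $\mc{L}_{\omega_1 \omega}$-sentence $T$ in a language $\mc{L}'\supseteq\mc{L}$, let $\mc{D}$ be the set of linear orders on $\om$ that expand to an $\mc{L}'$-structure on $\om$ satisfying $T$. The satisfaction relation of the fixed sentence $T$ is Borel, with Borel rank bounded by the syntactic rank of $T$, so ``$\preceq$ expands to a model of $T$'' — the projection of this Borel set along the at most countably many new symbols of $\mc{L}'$ (each of whose interpretations on $\om$ is a point of a fixed Polish space) — is $\bfSigma^1_1$, and $\mc{D}$ plainly has the same order types as $\mc{C}$. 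When $T$ is computable its satisfaction relation is hyperarithmetic, hence $\Delta^1_1$ (a computable infinitary $\Sigma_\alpha$ sentence has $\Sigma^0_\alpha$ satisfaction for $\alpha$ computable; see \cite[Ch.~7]{AshKnight00}), so the same projection is lightface $\Sigma^1_1$.

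\textbf{$(1)\Rightarrow(2)$} carries the content. Fix $z\in\Bai$ ($z=\emptyset$ in the lightface case) so that $\mc{C}$ is $\Sigma^1_1(z)$; passing to the class of all linear orders on $\om$ isomorphic to a member of $\mc{C}$, which is again $\Sigma^1_1(z)$ and has the same order types, we may assume $\mc{C}$ is closed under isomorphism. By the tree normal form for $\Sigma^1_1(z)$ there is a $z$-computable operation $\preceq\mapsto\mc{T}_\preceq\subseteq\om^{<\om}$, uniform in the characteristic function of a linear order $\preceq$ on $\om$, with $\preceq\in\mc{C}$ iff $\mc{T}_\preceq$ has an infinite branch. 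Work in $\mc{L}'=\mc{L}\cup\{N,S,0_N,h,b\}$: a new unary predicate $N$, a unary function $S$, a constant $0_N$, a function $h$ from $N$ into the $\mc{L}$-sort, and a unary function $b$ on $N$. Let $T$ conjoin: (i) $\leq$ is a linear order of the $\mc{L}$-sort; (ii) $(N,S,0_N)$ is a standard copy of $(\om,n\mapsto n+1,0)$, axiomatized in the usual $\Pi^{\comp}_2$ manner, the key clause being $\forall x\,\bigdoublevee_n\,x=S^n(0_N)$; (iii) $h$ is a bijection of $N$ onto the $\mc{L}$-sort; (iv) $b$ is an infinite branch through $\mc{T}_{\leq^h}$, where $\leq^h=\{(m,m')\in N^2: h(m)\leq h(m')\}$ is the linear order transported back to $N\cong\om$.

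Clause (iv) is the only one needing comment. For each fixed $n$, the statement ``$\langle b(0_N),\dots,b(S^{n-1}0_N)\rangle\in\mc{T}_{\leq^h}$'' depends only on the values $b(S^i0_N)\in N\cong\om$ for $i<n$, on which of the atomic facts $h(S^i0_N)\leq h(S^j0_N)$ ($i,j<n$) hold, and on finitely many fixed bits of $z$; as $\mc{T}$ is $z$-computable this is a finite disjunction — over the admissible length-$n$ patterns — of conjunctions of formulas of the shapes $b(S^i0_N)=S^k(0_N)$ and $(\neg)\,h(S^i0_N)\leq h(S^j0_N)$, i.e.\ a finitary formula $\chi_n$ with $z$ hard-coded into the choice of admissible patterns. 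Thus (iv) is $\bigdoublewedge_{n}\chi_n$, an $\mc{L}_{\omega_1 \omega}$-sentence, and a computable one when $z$ is computable. If $(M,{\leq},N,\dots)\models T$ then $\leq^h$ is ill-founded, so $\leq^h\in\mc{C}$ and $(M,{\leq})$ has order type in $\mc{C}$; conversely, an infinite linear order whose order type lies in $\mc{C}$ expands to a model of $T$ by taking a standard $\om$ for $N$, a bijection for $h$, and any branch of the ill-founded $\mc{T}_{\leq^h}$ for $b$. Hence the $\mc{L}$-reducts of models of $T$ realize exactly the infinite order types in $\mc{C}$. The finite order types of $\mc{C}$ form a $\Sigma^1_1(z)$ set $B\subseteq\om$, handled by a parallel sentence in a language with infinitely many unary predicates — a finite structure there still carries a real, enough to witness $n\in B$ — using that ``the $\mc{L}$-sort has exactly $n$ elements'' is finitary; disjoining the two sentences over the shared $\mc{L}$, which preserves being a $PC_{\mc{L}_{\omega_1\omega}}$-class (computable if both parts are), gives the required $T$. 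The main obstacle is exactly clause (iv): one must simulate, inside a first-order expansion pinned down by an $\mc{L}_{\omega_1 \omega}$-sentence, both the external copy of $\om$ on which the $\Sigma^1_1$ definition of $\mc{C}$ is phrased (the sort $N$ with enumeration $h$) and the existential real witness of $\Sigma^1_1$ (the function $b$), and then verify that ``$b$ is a branch of $\mc{T}_{\leq^h}$'' has the form $\bigdoublewedge_n\chi_n$ and stays computable when $\mc{T}$ does; the isomorphism closure, the finite-order reduction, and the matching of order types are then bookkeeping.
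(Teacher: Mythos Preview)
Your argument is correct in outline but takes a genuinely different route from the paper for $(1)\Rightarrow(2)$. The paper names each element of the order by a constant $a_i$, adds a unary relation $Y$ to encode the second-order witness, observes that the resulting class $\mc{D}$ is Borel and isomorphism-invariant, and then simply invokes the Lopez-Escobar theorem (Vanden Boom's effective version for the lightface case) to conclude that $\mc{D}$ is $\mc{L}_{\omega_1\omega}$-axiomatizable. You instead unwind the $\Sigma^1_1$ tree normal form directly, building an explicit expansion with a pinned-down copy of $\omega$, a bijection $h$, and a branch witness $b$. Your construction is self-contained and avoids the Lopez-Escobar black box; the paper's is much shorter but relies on a nontrivial external theorem, especially in the lightface case.

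One slip: your formulas $\chi_n$ are not finitary. Since $b(S^i 0_N)$ ranges over all of $N\cong\omega$, the disjunction over possible value-tuples $(k_0,\ldots,k_{n-1})\in\omega^n$ is countable, not finite; the ``admissible patterns'' of $\leq^h$-facts on $\{0,\ldots,n-1\}$ are indeed finitely many, but the $b$-values are not. This is harmless---each $\chi_n$ is then $\Sigma^{\infi}_1$ (and $\Sigma^{\comp}_1$ in the lightface case, since the disjunction is over a c.e.\ set), so $\bigwedge_n\chi_n$ is still $\Pi^{\infi}_2$ (respectively $\Pi^{\comp}_2$). If you want $\chi_n$ genuinely finitary, replace $\mc{T}_\preceq\subseteq\omega^{<\omega}$ by a tree in $2^{<\omega}$ and let $b$ take values in $\{0_N,S(0_N)\}$. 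The separate treatment of finite order types is unneeded under the paper's convention that the $\bfSigma^1_1$ class consists of linear orders with domain $\omega$, but does no harm.
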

\begin{proof}
(2)$\Rightarrow$(1) is clear. For (1)$\Rightarrow$(2), suppose that $\mc{C}$ is a class of linear orders defined by $\exists X \varphi(X,\leq)$ where $\varphi$ has only quantifiers over $\omega$. Consider the class $\mc{C}^{+}$ of pairs $(\leq,X) \subseteq \omega^2 \times \omega$ with $\varphi(X,\leq)$. Then $\mc{C}^{+}$ is a Borel class.

Let $\mc{D}$ be the class of models in the language $\{\preceq, Y\} \cup \{a_i : i \in \omega\}$ such that each element of the domain is named by a unique constant $a_i$, and such that with $\leq \subseteq \omega^2$ defined by $i \leq j \Longleftrightarrow a_i \preceq a_j$ and $X \subseteq \omega$ defined by $i \in X \Longleftrightarrow a_i \in Y$, $(\leq,X) \in \mc{C}^+$. This gives a Borel reduction from $\mc{D}$ to $\mc{C}^{+}$, and hence $\mc{D}$ is Borel. By a theorem of Lopez-Escobar \cite{Lopez65}, $\mc{D}$ is $\mc{L}_{\omega_1 \omega}$-axiomatizable since it is closed under isomorphism. Moreover, the order types of models in $\mc{D}$ are the same as the order types of the linear orders in $\mc{C}^+$ and hence  the same as those in $\mc{C}$.

For the lightface notions, the proof is the same except that we use Vanden Boom's \cite{VandenBoom07} lightface analogue of the Lopez-Escobar theorem.
\end{proof}

\section{Bounds on Scott Height}\label{sec:sh}
Recall that Sacks \cite{Sacks83} showed that $\sh(\mc{L}^{\comp}_{\omega_1,\omega}) \leq \delta^1_2$ and that Marker \cite{Marker90} showed that $\sh(PC_{\mc{L}^{\comp}_{\omega_1 \omega}}) = \delta^1_2$. We now show that $\sh(\mc{L}^{\comp}_{\omega_1,\omega}) = \delta^1_2$.

\begin{proof}[Proof of Theorem \ref{thm:sh}]
Fix $\alpha < \delta^1_2$. We may assume that $\alpha \geq \omega_1^{CK}$. Let $\mc{S}$ be a computable $PC_{\mc{L}_{\omega_1 \omega}}$-class in a language $\mc{L}$ with $\alpha < \sh(\mc{S}) < \delta^1_2$. When we say that $\mc{S}$ is a computable class, we mean that there is a computable $\mc{L}_{\omega_1 \omega}$-sentence $T$ in a language $\mc{L}' \supseteq \mc{L}$ such that $\mc{S}$ is the class of reducts of models of $T$ to $\mc{L}$.

We define a (lightface) $\Sigma^1_1$ class $\mc{C}$ of linear orders as follows. $(L,\leq)$ is in $\mc{C}$ if and only if there is:
\begin{enumerate}
	\item a model $\mc{A}$ of $T$,
	\item a set $X \subseteq \omega$,
	\item a Harrison linear order $\mc{H}$ relative to $X$,
	\item an embedding $f \colon L \to \mc{H}$ such that $f(L)$ is an initial segment of $\mc{H}$, and
	\item non-standard back-and-forth relations $\precsim_\alpha$ on $\mc{A}$ indexed by $L$ (in the language $\mc{L}$ of $\mc{S}$, not of $T$), such that:
\begin{enumerate}
	\item\label{S1} for all $\alpha \in L$, there is $\bar{x}$ which is $\alpha$-free, i.e. for all $\bar{y}$ and $\beta < \alpha$, there are $\bar{x}'$ and $\bar{y}'$ such that $\bar{x}, \bar{y} \precsim_\beta \bar{x}', \bar{y}'$ and $\bar{x}' \nprecsim_\alpha \bar{x}$,
	\item\label{S2} the set of partial maps
	\[ \{ \bar{a} \mapsto \bar{b} : \bar{a} \precsim_{\alpha} \bar{b} \text{ for all $\alpha$}\} \]
	has the back-and-forth property.
\end{enumerate}
\end{enumerate}

If $\mc{A} \models T$ has Scott rank $\alpha$ (where we compute Scott rank in the language $\mc{L}$ of $\mc{S}$), then take $X$ such that $\omega_1^X > \alpha$. Let $\mc{H}$ be a Harrison linear order relative to $X$. Then $\alpha$ embeds into an initial segment of $\mc{H}$, and we can take the standard back-and-forth relations on $\mc{A}$, indexed by elements of $\alpha$. By Theorem \ref{thm:montalban}, (\ref{S1}) and (\ref{S2}) are satisfied. Thus $\alpha \in \mc{C}$.

On the other hand, if $(L,\leq)$ is well-founded, and there is a model $\mc{A}$ of $T$ with back-and-forth relations indexed by $L$ satisfying (\ref{S1}) and (\ref{S2}), then $L$ is the Scott rank of $\mc{A}$. Thus $\mc{C} \cap \On = \scs(T)$.

We claim that if $L \in \mc{C}$, then $\wf(L) \leq \sup(\scs(\mc{S}))$. If $L$ is well-founded, then this is clear, so assume that $L$ is not well-founded. Thus, for some set $X$, $L$ embeds into a Harrison linear order $\mc{H}$ relative to $X$ as an initial segment (and there is a model $\mc{A}$ of $T$ and non-standard back-and-forth relations as above). Since $L$ is not well-founded, its image in $\mc{H}$ includes the well-founded part $\omega_1^X$. Now, for each $\alpha \in \wf(L)$, by (\ref{S1}) there is $\bar{x}$ which is $\alpha$-free. Thus the Scott rank of $\mc{A}$ is at least $\omega_1^X$. Hence $\wf(L) \leq SR(\mc{A})$, and since $L$ was arbitrary, $\wf(L) \leq \sup(\scs(\mc{S}))$.

By the lightface version of Theorem \ref{thm:construction2}, there is a computable $\mc{L}_{\omega_1 \omega}$-sentence $T'$ such that
\[ \scs(T') = \{ \wf(L) \colon L \in \mc{C}\} \supseteq \mc{C} \cap \On = \scs(\mc{S}) \ni \alpha.\]
Since if $L \in \mc{C}$ then $\wf(L) \leq \sup(\scs(\mc{S}))$, we have $\sh(T) \leq \sh(\mc{S})$. So $\alpha \leq \sh(T') < \omega_1$. Since $\alpha$ was arbitrary, $\sh(\mc{L}^{\comp}_{\omega_1 \omega}) \geq \delta^1_2$. This proves the theorem.
\end{proof}

\section{Possible Scott Spectra of Theories}\label{sec:scs}
In this section, we will prove Theorems \ref{thm:main-result} and \ref{thm:main-result2} which completely classify the possible Scott spectra under the assumption of Projective Determinacy. We begin by going as far as we can without any assumptions beyond ZFC, and then we assume Projective Determinacy in order to get a cone of sets $X$ where the Scott spectrum contains either only $\omega_1^X$ for all $X$ on the cone, or only $\omega_1^X + 1$ for all $X$ on the cone, or both for all $X$ on the cone.

The following result is well-known.

\begin{lemma}\label{lem:models-admissible}
Let $T$ be an $\mc{L}_{\omega_1 \omega}$-sentence. If the Scott spectrum of $T$ is unbounded below $\omega_1$, then there is a set $Y$ such that for every $X \geq_T Y$, there is a model $\mc{A}$ of $T$ with $SR(\mc{A}) \geq \omega_1^{\mc{A},X} = \omega_1^X$.	In particular, $SR(\mc{A}) = \omega_1^X$ or $SR(\mc{A}) = \omega_1^X + 1$.
\end{lemma}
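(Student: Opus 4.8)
The plan is to run a Barwise compactness argument over a suitable admissible set. Fix $Y$ to be a real coding $T$, so that $T \in L_{\omega_1^X}[X]$ for every $X \geq_T Y$; fix such an $X$ and set $\mathbb{A} = L_{\omega_1^X}[X]$, a countable admissible set with $\mathbb{A} \cap \mathrm{Ord} = \omega_1^X$ and $T \in \mathbb{A}$. The goal is to produce a model $\mc{A}$ of $T$, tame enough over $X$, in which every $\alpha < \omega_1^X$ witnesses an $\alpha$-free tuple; by Theorem \ref{thm:montalban}(\ref{part:free}) this forces $SR(\mc{A}) \geq \omega_1^X$.

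First I would write, for each $\alpha < \omega_1^X$, an $\mc{L}_{\mathbb{A}}$-sentence $\sigma_\alpha$ expressing ``there is an $\alpha$-free tuple''. This really is a sentence of $\mc{L}_{\mathbb{A}}$: by Proposition 15.1 of \cite{AshKnight00} (as used in Lemma \ref{lem:def-sim}), for $\beta \in \mathbb{A}$ the relation $\bar{a} \leq_\beta \bar{b}$ is defined, in every structure, by the $\mc{L}_{\mathbb{A}}$-formula $\bigwedge_{\varphi \in \Sigma^{\infi}_\beta} \big(\varphi(\bar{b}) \to \varphi(\bar{a})\big)$ (the collection of $\Sigma^{\infi}_\beta$-formulas being an element of $\mathbb{A}$), and $\alpha$-freeness of $\bar{x}$ then unwinds to $\forall \bar{y}\, \bigwedge_{\beta < \alpha} \exists \bar{x}'\, \exists \bar{y}'\, \big( \bar{x}\bar{y} \leq_\beta \bar{x}'\bar{y}' \wedge \neg(\bar{x}' \leq_\alpha \bar{x}) \big)$. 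Put $T^{*} = T \cup \{ \sigma_\alpha : \alpha < \omega_1^X \}$; this is a $\Sigma_1$-definable theory over $\mathbb{A}$.

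Next I would check that $T^{*}$ is $\mathbb{A}$-consistent. Any $\mathbb{A}$-finite $T_0 \subseteq T^{*}$ contains $\sigma_\alpha$ only for $\alpha$ ranging over a set lying in $\mathbb{A}$, hence bounded by some $\gamma < \omega_1^X$; since $\scs(T)$ is unbounded below $\omega_1$, choose a model $\mc{A}$ of $T$ with $SR(\mc{A}) \geq \gamma$. Then $\mc{A}$ has no $\Pi^{\infi}_{\alpha+1}$ Scott sentence for $\alpha < \gamma$, so by Theorem \ref{thm:montalban}(\ref{part:free}) each such $\alpha$ has an $\alpha$-free tuple in $\mc{A}$; thus $\mc{A} \models T_0$. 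By Barwise compactness $T^{*}$ has a countable model $\mc{A}$, and since the $\mc{L}_{\mathbb{A}}$-formula above genuinely defines $\leq_\beta$ in $\mc{A}$, the satisfaction of $\sigma_\alpha$ gives a real $\alpha$-free tuple for each $\alpha < \omega_1^X$; hence $\mc{A}$ has no $\Pi^{\infi}_{\alpha+1}$ Scott sentence for any $\alpha < \omega_1^X$, i.e.\ $SR(\mc{A}) \geq \omega_1^X$.

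The one delicate point — and the place where the word ``well-known'' earns its keep — is the complexity of the model. The effective form of Barwise compactness (the Henkin model is built by an $\mathbb{A}$-recursive procedure and so is $\Delta_1$-definable over $\mathbb{A}$) lets us take $\mc{A}$ with $\omega_1^{\mc{A},X} = \omega_1^X$. Then $SR(\mc{A}) \geq \omega_1^X = \omega_1^{\mc{A},X}$, and the relativization of Nadel's theorem \cite{Nadel74} to the oracle $\mc{A}\oplus X$ gives $SR(\mc{A}) \leq \omega_1^{\mc{A},X} + 1$, so $SR(\mc{A}) \in \{\omega_1^X, \omega_1^X + 1\}$. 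The main obstacle is precisely this last step: preventing the model extracted from compactness from pushing $\omega_1^{\mc{A},X}$ above $\omega_1^X$. Everything else is routine — the compactness argument once $\alpha$-freeness is coded in $\mc{L}_{\mathbb{A}}$, and bookkeeping with Theorem \ref{thm:montalban}.
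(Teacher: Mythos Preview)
Your Barwise-compactness detour is sound up through producing \emph{some} model of $T^{*}$, but the step you yourself flag as delicate is where the argument actually breaks. The claim that ``the Henkin model is built by an $\mathbb{A}$-recursive procedure and so is $\Delta_1$-definable over $\mathbb{A}$'' is not correct: carrying out the Henkin construction requires, at each stage, deciding whether a given $\mathbb{A}$-finite extension remains consistent, and consistency of an $\mc{L}_{\mathbb{A}}$-theory is only $\Pi_1$ over $\mathbb{A}$, not $\Delta_1$. So the term model you extract need not be hyperarithmetic in $X$, and you have no control over $\omega_1^{\mc{A},X}$.

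The fix is exactly what the paper cites: Gandy's basis theorem. Observe that the class
\[
\{\mc{A} : \mc{A}\models T \text{ and } SR(\mc{A}) \geq \omega_1^X\}
\]
is $\Sigma^1_1(X)$. Indeed, $SR(\mc{A}) < \omega_1^X$ says $(\exists e)[\,e \in \mc{O}^X \wedge \text{no tuple in }\mc{A}\text{ is }|e|\text{-free}\,]$; the first conjunct is $\Pi^1_1(X)$ and the second is $\Delta^1_1(X,\mc{A})$ uniformly in $e$, so the whole statement is $\Pi^1_1(X,\mc{A})$ and its negation is $\Sigma^1_1$. This class is nonempty simply because the spectrum is unbounded below $\omega_1$ --- you do not need compactness for that --- and Gandy's basis then hands you a member $\mc{A}$ with $\omega_1^{\mc{A},X} = \omega_1^X$. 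Nadel finishes as you said.

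So once the gap is repaired your argument collapses to the paper's one-line proof; the Barwise compactness and the $\sigma_\alpha$'s are scaffolding that can be removed, since unboundedness already guarantees models of $T^{*}$ exist and the only real content is the $\Sigma^1_1(X)$ complexity computation plus Gandy.
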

\begin{proof}
Choose $Y$ such that $T$ is $Y$-computable. This is a well-known application of Gandy's basis theorem; see \cite[Lemma 3.4]{Montalban13}.
\end{proof}

In the next lemma, we consider the linear orders which support back-and-forth relations on models of an $\mc{L}_{\omega_1 \omega}$-sentence $T$. This will give a $\bfSigma^1_1$ class of linear orders, which if it is unbounded will contain non-well-founded members (supporting non-standard back-and-forth relations).

\begin{theorem}\label{thm:second-dir}
Let $T$ be an $\mc{L}_{\omega_1 \omega}$-sentence. There is a $\bfSigma^1_1$ class of linear orders $\mc{C}$ such that
\[ \scs(T) = \mc{C} \cap \On. \]
If $\scs(T)$ is bounded below $\omega_1$, then $\mc{C} \subseteq \On$. Otherwise, if $\scs(T)$ is unbounded below $\omega_1$, then there is a set $Y$ such that
\begin{enumerate}
	\item for all $X \geq_T Y$, at least one of $\omega_1^X$ or $\omega_1^X + 1$ is in $\mc{C}$, and
	\item the only non-well-founded members of $\mc{C}$ are Harrison linear orders relative to some $X \geq_T Y$, i.e.\ $\omega_1^Y \cdot (1 + \mathbb{Q}) + \beta$ for some $Y$-computable $\beta$.
\end{enumerate}
\end{theorem}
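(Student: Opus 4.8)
The plan is to define $\mc{C}$ directly as the class of linear orders $(L,\leq)$ that "support back-and-forth relations witnessing that some model of $T$ has Scott rank $L$", in a way parallel to the class $\mc{C}$ constructed in the proof of Theorem \ref{thm:sh}. Concretely, put $(L,\leq) \in \mc{C}$ if and only if there exist: a countable model $\mc{A}$ of $T$ (coded as a subset of $\omega$), a sequence of equivalence relations $(\precsim_\alpha)_{\alpha \in L}$ on $\mc{A}$ which are non-standard back-and-forth relations in the sense of Definition \ref{def:nsbf}, such that (a) for each $\alpha \in L$ there is a tuple $\bar{x}$ of $\mc{A}$ which is $\alpha$-free with respect to $(\precsim_\alpha)_{\alpha \in L}$ (i.e.\ for all $\bar{y}$ and all $\beta <_L \alpha$ there are $\bar{x}',\bar{y}'$ with $\bar{x},\bar{y} \precsim_\beta \bar{x}',\bar{y}'$ and $\bar{x}' \nprecsim_\alpha \bar{x}$), and (b) the set of finite partial maps $\{\bar{a}\mapsto\bar{b} : \bar{a}\precsim_\alpha\bar{b}\text{ for all }\alpha\in L\}$ has the back-and-forth property. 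All of these clauses are arithmetic in the parameters $\mc{A}$, $(\precsim_\alpha)$, $L$, together with one existential set quantifier, so $\mc{C}$ is $\bfSigma^1_1$ (lightface $\Sigma^1_1$ if $T$ is computable). Since on the well-founded part the non-standard back-and-forth relations are forced to equal the standard ones (the Remark before Lemma \ref{lem:non-standard-bf}), clause (a) together with Montalb\'an's Theorem \ref{thm:montalban}(\ref{part:free}) says exactly that $\mc{A}$ has no $\alpha$-free tuple for $\alpha < L$ yet does have one witnessing failure at $L$, i.e.\ $SR(\mc{A}) = L$; and clause (b) is the standard fact that these maps give automorphisms. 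This yields $\scs(T) = \mc{C}\cap\On$: the inclusion $\subseteq$ is witnessed by the standard back-and-forth relations on a model of Scott rank $\alpha$, and $\supseteq$ is the computation just described.

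Next I would handle the bounded case: if $\scs(T)$ is bounded below $\omega_1$, then I claim $\mc{C}\subseteq\On$. Suppose $(L,\leq)\in\mc{C}$ were ill-founded, with witnesses $\mc{A}$, $(\precsim_\alpha)$. By Lemma \ref{lem:non-standard-bf}, for $\alpha\in L\setminus\wf(L)$ the relation $\precsim_\alpha$ holds only between tuples in the same automorphism orbit, while clause (a) gives, for every $\alpha\in\wf(L)$, an $\alpha$-free tuple — so by Theorem \ref{thm:montalban}(\ref{part:free}) the model $\mc{A}$ has Scott rank at least $\wf(L)$, and $\wf(L)$ is a countable ordinal (computable from the witnesses, hence $< \omega_1$). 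Running this over all ill-founded $L\in\mc{C}$, the set $\{\wf(L) : L\in\mc{C}\}$ would be a $\bfSigma^1_1$ set of ordinals; if it were unbounded below $\omega_1$ then $\scs(T)$ would be unbounded, contradiction — so it is bounded, and we can actually just observe more simply that an ill-founded $L\in\mc{C}$ would directly give $SR(\mc{A})\geq\wf(L)$ with $\wf(L)$ arbitrarily large as $L$ varies, again contradicting boundedness unless there are no ill-founded members at all. (One must be slightly careful here; the cleanest route is to prove the unbounded-case structural facts first and then note that in the bounded case the Harrison-order members cannot appear.)

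For the unbounded case I would invoke Lemma \ref{lem:models-admissible} to fix $Y$ with $T$ being $Y$-computable and such that for every $X\geq_T Y$ there is a model $\mc{A}$ of $T$ with $SR(\mc{A}) = \omega_1^X$ or $\omega_1^X+1$. Taking the standard back-and-forth relations on such an $\mc{A}$, indexed by an ordinal presentation of $SR(\mc{A})$ (which is $X$-computable-ish by Nadel/admissibility considerations), shows $\omega_1^X\in\mc{C}$ or $\omega_1^X+1\in\mc{C}$, giving (1). For (2): suppose $(L,\leq)\in\mc{C}$ is ill-founded, with witness set $X$ (the one set-quantifier witness packaging $\mc{A}$ and $(\precsim_\alpha)$ and $L$ itself). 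Then $L$ is an $X$-computable ill-founded linear order, so $\wf(L)\geq\omega_1^X$; but also, by the argument of the bounded case, the existence of $\alpha$-free tuples for all $\alpha\in\wf(L)$ forces $SR(\mc{A})\geq\wf(L)$, and since $\mc{A}$ is $X$-computable, Nadel's theorem gives $SR(\mc{A})\leq\omega_1^{\mc{A},X}+1 = \omega_1^X+1$, whence $\wf(L) = \omega_1^X$. Combined with Harrison's theorem (stated in Subsection \ref{sub:harrison}) — any $X$-computable ill-founded order with no $X$-hyperarithmetic descending sequence has order type $\omega_1^X\cdot(1+\mathbb{Q})+\beta$ — and the observation that an $X$-hyperarithmetic descending sequence in $L$ would push $\wf(L)$ below $\omega_1^X$, we get that $L$ is a Harrison linear order relative to $X$ (up to an $X$-computable tail), as required; and here $X$ can be taken $\geq_T Y$ since $T$ is $Y$-computable and the witness must compute $T$, or we simply replace $X$ by $X\oplus Y$. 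The main obstacle I anticipate is the bookkeeping around exactly which object serves as the single existential-set witness and verifying that $\wf(L) = \omega_1^X$ rather than merely $\geq\omega_1^X$: this needs the Nadel bound applied to $\mc{A}$ relativized to $X$, plus care that the non-standard back-and-forth relations in $L\setminus\wf(L)$ do not secretly encode extra complexity — but Lemma \ref{lem:non-standard-bf} confines them to automorphism orbits, so they add nothing, and the argument closes.
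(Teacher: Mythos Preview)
Your definition of $\mc{C}$ omits the one clause that does all the work in the paper: the requirement that $L$ embed as an initial segment of a Harrison linear order $\mc{H}^X$ for some $X \geq_T Y$. The paper builds this clause directly into the definition of $\mc{C}$ (and handles the bounded case separately by simply taking $\mc{C} = \scs(T)$, a countable set of ordinals). With the embedding clause, (2) is immediate: any ill-founded initial segment of $\mc{H}^X$ is itself of the form $\omega_1^X \cdot (1+\mathbb{Q}) + \beta$.

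Without that clause, your derivation of (2) breaks. The step ``$L$ is an $X$-computable ill-founded linear order, so $\wf(L) \geq \omega_1^X$'' is simply false: $\omega + \mathbb{Z}$ is computable and ill-founded with $\wf(L) = \omega$. Nothing in clauses (a) and (b) prevents $L$ from being, say, $\omega + \mathbb{Z}$, since on the non-well-founded part the relations $\precsim_\alpha$ are only \emph{contained in} the same-orbit relation (Lemma \ref{lem:non-standard-bf}), not equal to it, so $\alpha$-freeness for non-standard $\alpha$ can be arranged. Your chain of inequalities gives only $\wf(L) \leq SR(\mc{A}) \leq \omega_1^X + 1$, never the lower bound $\wf(L) \geq \omega_1^X$; and the remark that an $X$-hyperarithmetic descending sequence ``would push $\wf(L)$ below $\omega_1^X$'' is also incorrect and in any case does not help. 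Similarly, in the bounded case there is no reason your $\mc{C}$ should lie inside $\On$, which is why the paper does not attempt this and instead just declares $\mc{C} = \scs(T)$. (There is also a slip in your reading of (a): it asserts that for each $\alpha \in L$ there \emph{is} an $\alpha$-free tuple, giving $SR(\mc{A}) \geq L$; the upper bound $SR(\mc{A}) \leq L$ comes from (b).) The fix is exactly the paper's move: add the Harrison-embedding clause to the definition of $\mc{C}$, and treat the bounded case by hand.
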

\begin{proof}
If $\scs(T)$ is bounded below $\omega_1$, then we can just take $\mc{C} = \scs(T)$; this is a $\bfSigma^1_1$ class. So suppose that $\scs(T)$ is unbounded below $\omega_1$.

By Lemma \ref{lem:models-admissible}, there is a set $Y$ such that for every $X \geq_T Y$, there is a model $\mc{A}$ of $T$ with $SR(\mc{A}) = \omega_1^X$ or $SR(\mc{A}) = \omega_1^X + 1$.

The proof of the theorem is similar to the proof of Theorem \ref{thm:sh}. Let $\mc{C}$ be the $\bfSigma^1_1$ class of linear orders defined as follows. $(L,\leq)$ is in $\mc{C}$ if and only if there are:
\begin{enumerate}
	\item a model $\mc{A}$ of $T$,
	\item a set $X \geq_T Y$,
	\item a Harrison linear order $\mc{H}$ relative to $X$,
	\item an embedding $f \colon L \to \mc{H}$ such that $f(L)$ is an initial segment of $\mc{H}$, and
	\item non-standard back-and-forth relations $\precsim_\alpha$ on $\mc{A}$ indexed by $L$ (in the language $\mc{L}$ of $\mc{S}$, not of $T$), such that:
\begin{enumerate}
	\item for all $\alpha \in L$, there is $\bar{x}$ which is $\alpha$-free, i.e. for all $\bar{y}$ and $\beta < \alpha$, there are $\bar{x}'$ and $\bar{y}'$ such that $\bar{x}, \bar{y} \precsim_\beta \bar{x}', \bar{y}'$ and $\bar{x}' \nprecsim_\alpha \bar{x}$,
	\item the set of partial maps
	\[ \{ \bar{a} \mapsto \bar{b} : \bar{a} \precsim_{\alpha} \bar{b} \text{ for all $\alpha$}\} \]
	has the back-and-forth property.
\end{enumerate}
\end{enumerate}
Note that while in Theorem \ref{thm:sh} $\mc{C}$ was a (lightface) $\Sigma^1_1$ class, now $\mc{C}$ is $\Sigma^1_1(Y,T)$. We still prove, in the same way, that $\mc{C} \cap \On = \scs(T)$.

Now if $(L,\leq)$ is not well-founded, then for some $X \geq_T Y$, $L$ embeds into the Harrison linear order $\mc{H}$ relative to $X$. Since $L$ is not well-founded, it is itself isomorphic to $\omega_1^X \cdot (1 + \mathbb{Q}) + \beta$ for some $X$-computable ordinal $\beta$. By choice of $Y$, either $\omega_1^X$ or $\omega_1^X + 1$ is the Scott spectrum of a model of $T$.
\end{proof}

Our use of projective determinacy will be to have a uniform choice of whether $\omega_1^X$ or $\omega_1^X + 1$ (or both) is in the Scott spectrum; that is, we will be able to choose $Y$ such that for all $X \geq_T Y$, $\omega_1^X$ is in the Scott spectrum, or such that for all $X \geq_T Y$, $\omega_1^X + 1$ is in the Scott spectrum, or both. We now prove Theorem \ref{thm:main-result2}, which said that each Scott spectrum is built from a $\bfSigma^1_1$ class by either taking the well-founded part, the well-founded collapse, or both.

\begin{proof}[Proof of Theorem \ref{thm:main-result2}]
Theorems \ref{thm:construction} and \ref{thm:construction2} show that each of these is a Scott spectrum. In the other direction, Theorem \ref{thm:second-dir} says that each Scott spectrum is $\mc{C} \cap \On$ for some $\bfSigma^1_1$ class $\mc{C}$. Either $\mc{C} \subseteq \On$ (in which case $\mc{C}$ is bounded below $\omega_1$, and we are done) or there is a set $Y$ such that
\begin{enumerate}
	\item for all $X \geq_T Y$, $\omega_1^X$ or $\omega_1^X + 1$ is in $\mc{C}$, and
	\item the only non-well-founded members of $\mc{C}$ are Harrison linear orders relative to some $X \geq_T Y$.
\end{enumerate}

By projective determinacy, there is $Z \geq_T Y$ such that one of the following is true for all $X \geq_T Z$: 
\begin{enumerate}
	\item $\omega_1^X \in \mc{C}$ and $\omega_1^X + 1 \notin \mc{C}$,
	\item $\omega_1^X \notin \mc{C}$ and $\omega_1^X + 1 \in \mc{C}$, or
	\item $\omega_1^X \in \mc{C}$ and $\omega_1^X + 1 \in \mc{C}$.
\end{enumerate}
Let $\mc{C}'$ be the set of linear orders in $\mc{C}$ which embed into an initial segment of $\mc{H}^X$ for some $X \geq_T Z$. Then $\mc{C} \cap \On = \mc{C}' \cap \On$, and depending on which case we were in above, we have:
\begin{enumerate}
	\item $\mc{C} \cap \On = \mc{C}' \cap \On = \wf(\mc{C}')$,
	\item $\mc{C} \cap \On = \mc{C}' \cap \On = \wfc(\mc{C}')$, or
	\item $\mc{C} \cap \On = \mc{C}' \cap \On = \wf(\mc{C}') \cup \wfc(\mc{C}')$. \qedhere
\end{enumerate}
\end{proof}

We now give the proof of our alternate characterization.

\begin{proof}[Proof of Theorem \ref{thm:main-result}]
The Scott spectra which are bounded below $\omega_1$ clearly correspond to $\bfSigma^1_1$ sets of ordinals which are bounded below $\omega_1$. So it remains only to deal with the unbounded case.

First we show that if $\mc{C}$ is a $\bfSigma^1_1$ set of linear orders with either $\mc{C} \cap \On$ or $\{ \alpha : \alpha + 1 \in \mc{C} \cap \On\}$ stationary, then $\mc{C} \cap On$ is the Scott spectrum of a theory. Note that for each set $Y$, $\{ \omega_1^X : X \geq_T Y \}$ contains a club (see Remark \ref{rem:club}), and hence intersects either $\mc{C} \cap \On$ or $\{ \alpha : \alpha + 1 \in \mc{C} \cap On\}$. Thus exactly one of the following is true for cofinally many $X$ in the Turing degrees: $\omega_1^X$ is in $\mc{C} \cap \On$ (but $\omega_1^X + 1$ is not), or $\omega_1^X+1$ is in $\mc{C} \cap \On$ (but $\omega_1^X$ is not), or both are in $\mc{C} \cap \On$. By Projective Determinacy, one of these is true on a cone, say the cone above a set $Y$.

Now let $\mc{C}'$ be the $\bfSigma^1_1$ set of linear orders $(L,\leq)$ in $\mc{C}$ such that for some $X \geq_T Y$, $L$ embeds into an initial segment of $\mc{H}^X$. Then $\mc{C}' \cap \On = \mc{C} \cap \On$, and either:
\begin{enumerate}
	\item whenever $(L,\leq) \in \mc{C}'$ is not well-founded, $\wf(L)$ is in $\mc{C}' \cap \On$,
	\item whenever $(L,\leq) \in \mc{C}'$ is not well-founded, $\wf(L) + 1$ is in $\mc{C}' \cap \On$, or
	\item whenever $(L,\leq) \in \mc{C}'$ is not well-founded, $\wf(L)$ and $\wf(L) + 1$ are in $\mc{C}' \cap \On$.
\end{enumerate}
By the proof of Theorem \ref{thm:main-result2}, $\mc{C}' \cap \On$ is a Scott spectrum.

On the other hand, if $T$ is an $\mc{L}_{\omega_1 \omega}$-sentence whose Scott spectrum is unbounded below $\omega_1$, then by Theorem \ref{thm:main-result2} there is a set $Y$ such that for all $X \geq_T Y$, either $\omega_1^X$ or $\omega_1^X + 1$ is in $\scs(T)$. By projective determinacy, there is $Z \geq_T Y$ such that either for all $X \geq_T Z$, $\omega_1^X \in \scs(T)$, or for all $X \geq_T Z$, $\omega_1^X + 1 \in \scs(T)$. In Remark \ref{rem:club}, we noted that $\{ \omega_1^X : X \geq_T Z \}$ is stationary. This completes the proof.
\end{proof}

\begin{remark}
In Theorems \ref{thm:construction} and \ref{thm:construction2} we can get $T$ to be $\Pi^0_2$. Thus Theorem \ref{thm:all-pi-two} follows from Theorem \ref{thm:main-result2}.
\end{remark}

\begin{remark}
Note that the proofs of Lemma \ref{lem:models-admissible} and Theorems \ref{thm:second-dir}, \ref{thm:main-result}, and \ref{thm:main-result2} go through if we replace $T$ by a $PC_{\mc{L}_{\omega_1 \omega}}$ class of structures. Thus, under projective determinacy, the Scott spectra of $PC_{\mc{L}_{\omega_1 \omega}}$-classes are the same as the Scott spectra of $\mc{L}_{\omega_1 \omega}$ sentences. Thus we have proved Theorem \ref{thm:main-result-pseudo}.
\end{remark}

We can use the classification to find some interesting examples of Scott spectra.

\begin{proposition}
The following are all Scott spectra of $\mc{L}_{\omega_1 \omega}$-sentences:
\begin{enumerate}
	\item $\{ \alpha + 1 : \alpha < \omega_1 \}$.
	\item $\{ \alpha : \alpha < \omega_1 \text{ is an admissible ordinal}\}$.
	\item $\{ \alpha + 1 : \alpha < \omega_1 \text{ is an admissible ordinal}\}$.
\end{enumerate}
\end{proposition}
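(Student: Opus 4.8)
The plan is to avoid projective determinacy and instead apply the two ZFC construction theorems, Theorems~\ref{thm:construction} and~\ref{thm:construction2}, to three suitably chosen $PC_{\mc{L}_{\omega_1 \omega}}$-classes of linear orders. Recall that if $\mc{S}$ is such a class, then both $\{\wfc(L) : L \in \mc{S}\}$ and $\{\wf(L) : L \in \mc{S}\}$ are Scott spectra, and that $\wf$ and $\wfc$ depend only on the order type of $L$; hence we may freely replace $\mc{S}$ by any class with the same set of order types, using the technical lemma (which shows that $\Sigma^1_1$ classes and $PC_{\mc{L}_{\omega_1 \omega}}$-classes of linear orders realize the same sets of order types) to move between the two notions.

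For part (1), I would take $\mc{S}$ to be the class of all non-well-founded countable linear orders. This is a $PC_{\mc{L}_{\omega_1 \omega}}$-class: in the language of linear orders augmented by constants $(c_n)_{n \in \om}$, the sentence asserting that $\leq$ is a linear order together with $\bigwedge_n (c_{n+1} < c_n)$ has exactly the non-well-founded linear orders as its reducts. Since $\wfc(L) = \wf(L) + 1$ for non-well-founded $L$, Theorem~\ref{thm:construction} yields a sentence $T$ with $\scs(T) = \{\wf(L) + 1 : L \text{ non-well-founded}\}$. It then remains to note that $\wf(L)$ realizes every countable ordinal on this class: $\wf(\alpha + \ZZ) = \alpha$ for each $\alpha < \om_1$ (because the copy of $\ZZ$ contributes no well-founded initial segment beyond $\alpha$), and $\wf(\ZZ) = 0$. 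Therefore $\scs(T) = \{\alpha + 1 : \alpha < \om_1\}$.

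For parts (2) and (3), I would take $\mc{S}$ to be the class of Harrison linear orders relative to some $X \subseteq \om$, i.e. the linear orders of type $\om_1^X \cdot (1 + \QQ)$. By the remarks in Section~\ref{sub:harrison}, being a Harrison linear order relative to some $X$ is a (lightface) $\Sigma^1_1$ property, so by the technical lemma $\mc{S}$ has the same order types as a $PC_{\mc{L}_{\omega_1 \omega}}$-class. Every $L \in \mc{S}$ is non-well-founded with $\wf(L) = \om_1^X$ (since $1 + \QQ$ has no second point, the well-founded part of $\om_1^X \cdot (1 + \QQ)$ is precisely its first copy of $\om_1^X$), so $\wfc(L) = \om_1^X + 1$. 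By Sacks's theorem---which is the definition of admissible ordinal used in Section~\ref{sub:harrison}---the set $\{\om_1^X : X \subseteq \om\}$ is exactly the set of countable admissible ordinals. Hence Theorem~\ref{thm:construction2} produces a sentence with Scott spectrum $\{\wf(L) : L \in \mc{S}\} = \{\alpha : \alpha \text{ is admissible}\}$, giving (2), and Theorem~\ref{thm:construction} produces a sentence with Scott spectrum $\{\wfc(L) : L \in \mc{S}\} = \{\alpha + 1 : \alpha \text{ is admissible}\}$, giving (3).

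I do not expect a genuine obstacle here; the argument is essentially the task of picking the right classes, and the only points requiring care are the two order-type computations just used and the verifications that the chosen classes are ($PC$-equivalent to) $PC_{\mc{L}_{\omega_1 \omega}}$-classes. Alternatively, one could deduce all three parts from Theorem~\ref{thm:main-result}: the successor ordinals form a Borel class of ordinals with $\{\alpha : \alpha + 1 \in \mc{C}\} = \om_1$ stationary, and the admissible ordinals are stationary because they contain a club by Remark~\ref{rem:club}. That route, however, uses projective determinacy unnecessarily and additionally requires checking that the admissible ordinals form a $\bfSigma^1_1$ class of ordinals, which is slightly more delicate than the direct construction above.
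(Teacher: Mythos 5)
Your proof is correct and follows essentially the same route as the paper: apply Theorems \ref{thm:construction} and \ref{thm:construction2} directly (no determinacy needed) to a $PC_{\mc{L}_{\omega_1\omega}}$-class of non-well-founded linear orders whose well-founded parts realize every countable ordinal for (1), and to the $\Sigma^1_1$ class of Harrison linear orders for (2) and (3); the paper witnesses the successors with orders $\alpha + \mathbb{Q}$ where you use $\alpha + \mathbb{Z}$, which is immaterial. The one point you elide is that $\omega$ is admissible in the standard sense but is not of the form $\omega_1^X$, so for (2) and (3) one must either adopt the paper's stated convention that ``admissible'' means $\omega_1^X$ for some $X$, or add $\omega$ (resp.\ $\omega+1$) to the spectrum afterwards via Proposition \ref{prop:ctble-union}, as the paper does in a parenthetical remark.
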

\begin{proof}
Note that if $(L,\leq)$ is not well-founded, then $\wfc(L)$ is a successor ordinal.  If $\mc{C}$ is the $\mc{L}_{\omega_1 \omega}$-definable class of all linear orders with an initial element and containing a dense interval (with endpoints), $\mc{C}$ contains no well-founded orders. Moreover, for each ordinal $\alpha < \omega_1$, $\alpha + \mathbb{Q} \in \mc{C}$ and $\wfc(\alpha + \mathbb{Q}) = \alpha + 1$. Thus
\[ \{ \wfc(L) : L \in \mc{C} \} = \{ \alpha : \alpha < \omega \text{ is a successor ordinal}\}\]
is a Scott spectrum.

In Subsection \ref{sub:harrison}, we remarked that class of Harrison linear orders is a $\Sigma^1_1$ class. Thus
\[ \{\wf(L) : L \in \mc{C} \} = \{ \omega_1^X : X \subseteq \omega \}\] 
and
\[ \{\wfc(L) : L \in \mc{C} \} = \{ \omega_1^X + 1 : X \subseteq \omega \}\] 
are Scott spectra. (Note that $\omega$ is an admissible which is not in the first spectrum, but we can easily add it in via Proposition \ref{prop:ctble-union}.)
\end{proof}

\section{Open Questions}

We begin by asking whether one can remove the assumption of Projective Determinacy in the classification of Scott spectra.

\begin{openquestion}
Classify the Scott spectra of $\mc{L}_{\omega_1 \omega}$-sentences in ZFC.
\end{openquestion}

We would also like to know a lightface classification. The proofs of Theorems \ref{thm:main-result} and \ref{thm:main-result2} do not go through for computable sentences because of the use of Projective Determinacy.

\begin{openquestion}
Classify the Scott spectra of computable $\mc{L}_{\omega_1 \omega}$-sentences.
\end{openquestion}

Finally, our construction relied upon being able to take infinite disjunctions, such as when we named each element of the order sort by a constant. A first-order theory cannot name each element of an infinite sort by a constant. Can our results be expanded to first-order theories?

\begin{openquestion}
Classify the Scott spectra of first-order theories.
\end{openquestion}

\bibliographystyle{alpha}
\bibliography{References}

\end{document}